\newcommand{\C}{\mathbb{C}}
\newcommand{\ZZ}{\mathbb{Z}}
\newcommand{\QQ}{\mathbb{Q}}
\newcommand{\NN}{\mathbb{N}}
\newcommand{\PP}{\mathbb{P}}
\newcommand{\A}{\mathbb{A}}
\newcommand{\OO}{\mathcal O}
\newcommand{\Ss}{\mathcal S}
\newcommand{\oo}{\mathfrak o}
\newcommand{\DD}{\mathcal D}
\newcommand{\XX}{\mathcal X}
\newcommand{\MM}{\mathcal M}
\newcommand{\gr}{\hbox{Gr}}
\newcommand{\rom}{\romannumeral}
\newcommand\undermat[2]{
  \makebox[0pt][l]{$\smash{\underbrace{\phantom{
    \begin{matrix}#2\end{matrix}}}_{\text{$#1$}}}$}#2}
\newcommand*{\da@rightarrow}{\mathchar"0\hexnumber@\symAMSa 4B }
\newcommand*{\da@leftarrow}{\mathchar"0\hexnumber@\symAMSa 4C }
\newcommand*{\xdashrightarrow}[2][]{%
  \mathrel{%
    \mathpalette{\da@xarrow{#1}{#2}{}\da@rightarrow{\,}{}}{}%
  }%
}
\newcommand{\xdashleftarrow}[2][]{%
  \mathrel{%
    \mathpalette{\da@xarrow{#1}{#2}\da@leftarrow{}{}{\,}}{}%
  }%
}
\newcommand*{\da@xarrow}[7]{%
  % #1: below
  % #2: above
  % #3: arrow left
  % #4: arrow right
  % #5: space left 
  % #6: space right
  % #7: math style 
  \sbox0{$\ifx#7\scriptstyle\scriptscriptstyle\else\scriptstyle\fi#5#1#6\m@th$}%
  \sbox2{$\ifx#7\scriptstyle\scriptscriptstyle\else\scriptstyle\fi#5#2#6\m@th$}%
  \sbox4{$#7\dabar@\m@th$}%
  \dimen@=\wd0 %
  \ifdim\wd2 >\dimen@
    \dimen@=\wd2 %   
  \fi
  \count@=2 %
  \def\da@bars{\dabar@\dabar@}%
  \@whiledim\count@\wd4<\dimen@\do{%
    \advance\count@\@ne
    \expandafter\def\expandafter\da@bars\expandafter{%
      \da@bars
      \dabar@ 
    }%
  }%  
  \mathrel{#3}%
  \mathrel{%   
    \mathop{\da@bars}\limits
    \ifx\\#1\\%
    \else
      _{\copy0}%
    \fi
    \ifx\\#2\\%
    \else
      ^{\copy2}%
    \fi
  }%   
  \mathrel{#4}%
}
\DeclareMathOperator{\aut}{Aut}
\DeclareMathOperator{\ide}{id}
\newtheorem{theorem}{Theorem}[section]
\newtheorem{claim}[theorem]{Claim}
\newtheorem{lemma}[theorem]{Lemma}
\newtheorem{corollary}[theorem]{Corollary}
\newtheorem{proposition}[theorem]{Proposition}
\newtheorem{remark}[theorem]{Remark}
\newtheorem{definition}[theorem]{Definition}
\newtheorem{convention}{Conventions}
\newtheorem{question}[theorem]{Question}
\newtheorem{notation}[theorem]{Notation}
\newtheorem{nonumbering}{Theorem}
\newtheorem{nonumberingt}{Acknowledgements}
\begin{document}
\author[Robert Laterveer]
{Robert Laterveer}

\address{Institut de Recherche Math\'ematique Avanc\'ee,
CNRS -- Universit\'e 
de Strasbourg,\
7 Rue Ren\'e Des\-car\-tes, 67084 Strasbourg CEDEX,
FRANCE.}
\email{robert.laterveer@math.unistra.fr}

\title{Algebraic cycles and triple $K3$ burgers}

\begin{abstract} We consider surfaces of geometric genus $3$ with the property that their transcendental cohomology splits into $3$ pieces, each piece coming from a $K3$ surface.
For certain families of surfaces with this property, we can show there is a similar splitting on the level of Chow groups (and Chow motives). 

 \end{abstract}

\keywords{Algebraic cycles, Chow group, motive, Bloch--Beilinson filtration, surface of general type, $K3$ surface, canonical $0$--cycle, finite--dimensional motive}
\subjclass[2010]{Primary 14C15, 14C25, 14C30, 14J28, 14J29.}

\maketitle

\section{Introduction}

This note is about a class of surfaces which we propose to call {\em triple $K3$ burgers\/}. These are complex smooth projective surfaces $S$ of general type of geometric genus $3$, with the property that there exist $3$ $K3$ surfaces $X_j$ such that the transcendental cohomology $H^2_{tr}(S)$ splits
  \begin{equation}\label{defprop}  H^2_{tr}(S)\cong H^2_{tr}(X_0)\oplus H^2_{tr}(X_1)\oplus H^2_{tr}(X_2)\ .
  \end{equation}
  (The precise definition of triple $K3$ burgers is more restrictive, cf. definition \ref{def}.)
  
%Combined with the Hodge conjecture, relation (\ref{defprop}) implies that the homological motive of $S$ lies in the subcategory generated by curves (i.e., the homological motive of $S$ is of abelian type, in the sense of \cite{V3}).   
   
  The crystal ball of the Bloch--Beilinson--Murre conjectures \cite{J2}, \cite{J4}, \cite{Vo}, \cite{MNP}, \cite{Mur} predicts that relation (\ref{defprop}) also holds on the level of Chow groups (and provided the Hodge conjecture is true, the Chow motive of $S$ should be of abelian type, in the sense of \cite{V3}). The main result of this note provides a verification of this prediction in certain cases:
  
  \begin{nonumbering}[=theorem \ref{main}] Let $S$ be a triple $K3$ burger. Assume that either

\noindent
(\rom1) $K_S^2=2$, or

\noindent
(\rom2) $K_S^2=3$ and the canonical map of $S$ is base point free.

%\noindent
%(\rom3) $K_S^2=4$ and the canonical map of $S$ is base point free.

 Then there is an isomorphism (induced by a correspondence)
   \[  A^2_{hom}(S)\ \xrightarrow{\cong}\ A^2_{hom}(X_0)\oplus A^2_{hom}(X_1)\oplus A^2_{hom}(X_2)\ ,\]
   where the $X_j$ are the associated $K3$ surfaces. 
  \end{nonumbering}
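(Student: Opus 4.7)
My plan is to establish the theorem by constructing explicit algebraic correspondences that realize the cohomological splitting (\ref{defprop}), and then to upgrade the cohomological statement to a Chow-theoretic one via a finite-dimensionality argument.

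First, I would exploit the hypotheses on $K_S^2$ to get a concrete geometric handle on $S$. In case (i), the invariants $p_g(S)=3$ and $K_S^2=2$ place $S$ on the Noether line, so its canonical map realizes $S$ (up to resolving base points) as a double cover of $\mathbb{P}^2$ branched along an octic. In case (ii), the base-point-free canonical linear system embeds $S$ (or maps it finitely) onto a cubic surface in $\mathbb{P}^3$, or exhibits $S$ as a triple cover of $\mathbb{P}^2$. In both situations I would expect the three $K3$ summands to arise from natural operations on $S$: most plausibly as minimal resolutions of quotients of $S$ (or of an auxiliary cyclic cover of $S$) by commuting involutions whose existence is suggested by the tripartite Hodge decomposition (\ref{defprop}). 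Identifying these involutions, or equivalently the explicit geometric realization of the $X_j$, is the crux of the argument.

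Second, once the $X_j$ have been pinned down geometrically, I would build algebraic correspondences $\Gamma_j \in A^2(S\times X_j)$ from the graphs of the associated quotient or cover maps, together with natural push/pull operations. A direct Hodge-theoretic computation, tracking how the relevant involutions act on $H^2_{tr}(S)$ and on each $H^2_{tr}(X_j)$, should then show that $\Gamma:=\sum_j \Gamma_j$ induces an isomorphism $H^2_{tr}(S)\cong \bigoplus_j H^2_{tr}(X_j)$ matching (\ref{defprop}) up to a nonzero scalar.

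Third, I would pass from cohomology to Chow groups by invoking finite-dimensionality in the sense of Kimura--O'Sullivan. The motives of the $X_j$ are $K3$ motives and in the relevant explicit families are known to be finite-dimensional. For $S$, finite-dimensionality should follow either from its explicit description (a double/triple cover of a rational surface, whose motive is controlled by that of the branch curve and of the $X_j$) or from the splitting itself once one knows the orthogonal complement of the $\Gamma_j$-image is a finite-dimensional motive with trivial cohomology. Given finite-dimensionality on both sides, the Kimura nilpotence theorem promotes the cohomological isomorphism induced by $\Gamma$ to an isomorphism of Chow motives, which yields the desired isomorphism on $A^2_{hom}$.

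The main obstacle is clearly the second step: producing the correspondences and verifying they realize the transcendental splitting. This is where the hypotheses on $K_S^2$ actually enter, and where one cannot avoid explicit geometry of the canonical model. The finite-dimensionality step, while delicate, is comparatively formal and follows well-established patterns in the Bloch--Beilinson literature for surfaces whose transcendental motive is controlled by $K3$ pieces.
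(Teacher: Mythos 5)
Your steps 1--2 are broadly consistent with the paper's set-up, though note that the involutions and the quotient maps $p_j\colon S\to \bar X_j$ are already part of the definition of a triple $K3$ burger, so the cohomological splitting via the correspondences $\Psi_j$ is essentially built in (remark \ref{Htr}); what the hypotheses on $K_S^2$ actually buy, and what the paper's propositions \ref{2} and \ref{3} establish, is a classification of the possible involutions on the canonical model (a hypersurface in $\PP(1^3,4)$ resp.\ $\PP(1^3,2)$), which is needed so that $S$, the involutions, and hence the correspondences all exist \emph{family-wise} over a nice parameter space.

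The genuine gap is your third step. Kimura--O'Sullivan finite-dimensionality is not known for general $K3$ surfaces, and the $K3$ surfaces $X_j$ arising here are general members of positive-dimensional families, so their motives are not known to be finite-dimensional; likewise there is no known argument giving finite-dimensionality of $h(S)$ from the double/triple cover description (the motive of a surface of general type is not ``controlled by the branch curve'' in any sense strong enough for this -- that is exactly the difficulty of Bloch-type conjectures). Indeed the paper only obtains finite-dimensionality as a \emph{conditional} corollary (corollary \ref{cor1}), under the extra hypothesis $\dim H^2_{tr}(S)\le 7$, and even then remarks that existence of such surfaces is unclear. So the nilpotence mechanism you want to invoke is unavailable. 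The paper replaces it by Voisin's spread method: the fibrewise vanishing of the cohomological projectors onto the ``odd'' eigenspaces of $H^2_{tr}(S_b)$ is spread over the base $B$ of the universal family, and the key input is the vanishing $A^2_{hom}(\Ss\times_B\Ss)=0$ (proposition \ref{triv}), proved by a stratification argument \`a la Totaro using the explicit weighted-hypersurface description. This is the ingredient your proposal is missing, and without it (or finite-dimensionality, which is out of reach) the passage from cohomology to Chow groups does not go through.
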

  
  (Here $A^2_{hom}()$ denotes the Chow group of $0$--cycles of degree $0$ with rational coefficients.)
   
  In each of the cases of theorem \ref{main}, these surfaces do exist (in case (\rom1), they form a family of dimension at least $6$; in case (\rom2) the moduli dimension is $4$).
   
 It is {\em not\/} a coincidence that the surfaces of theorem \ref{main} lie on or close to the Noether line $K^2=2p_g-4$. Indeed (as is known since the fundamental work of Horikawa \cite{Hor1}, \cite{Hor0}, \cite{Hor}, \cite{Hor3}, \cite{Hor4}), the canonical model of a general type surface on or close to the Noether line admits a neat description as complete intersection in a certain weighted projective space. Thanks to such a description, surfaces as in theorem \ref{main} fit in nicely behaved universal families. Then, one can apply the alchemy of Voisin's method of ``spread'' \cite{V0}, \cite{V1}, \cite{Vo} to transmute the base metal of the homological relation (\ref{defprop}) into the pure gold of a rational equivalence.
  
We also prove (subsection \ref{scan}) that a triple $K3$ burger $S$ as in theorem \ref{main} admits a canonical $0$--cycle $\oo_S\in A^2(S)$, such that there is a splitting
  \[   A^2(S)=\QQ[\oo_S]\oplus A^2_{hom}(S)\ .\]
  The cycle $\oo_S$ has the property that the intersection of certain divisors is proportional to $\oo_S$ (proposition \ref{int}). Another characterization of $\oo_S$ is as follows (proposition \ref{orbit}): for any positive integer $k$, the cycle $k\oo_S$ is the unique degree $k$ $0$--cycle $z$ for which the effective orbit $O_z$ has dimension $\ge k$.
  These results are based on similar results for the canonical $0$--cycle of a $K3$ surface \cite{Huyb}, \cite{BV}, \cite{Vo}, \cite{V21}. 
  
  In a sense, the present note is a sequel to \cite{hor}, which dealt with certain surfaces of geometric genus $p_g=2$. The surfaces $S$ of \cite{hor} are also studied in \cite{Gar} and \cite{PZ}; they have the property that their transcendental cohomology decomposes
   \[ H^2_{tr}(S)\cong H^2_{tr}(X_0)\oplus H^2_{tr}(X_1)\ ,\]
   where $X_0,X_1$ are $K3$ surfaces. In \cite{hor}, using arguments very similar to the present note, I proved there exists a similar splitting on the level of Chow groups.
  
  Several open questions remain, which I hope someone will be able to answer (cf. section \ref{sopen}).

 \vskip0.6cm

\begin{convention} In this article, the word {\sl variety\/} will refer to a reduced irreducible scheme of finite type over $\C$. A {\sl subvariety\/} is a (possibly reducible) reduced subscheme which is equidimensional. 

{\bf By default, all Chow groups will be with rational coefficients}: we will denote by $A_j(X)$ the Chow group of $j$--dimensional cycles on $X$ with $\QQ$--coefficients; for $X$ smooth of dimension $n$ the notations $A_j(X)$ and $A^{n-j}(X)$ are used interchangeably. When dealing with Chow groups with integral coefficients, we will make this clear by writing $A_j(X)_\ZZ$.

The notations $A^j_{hom}(X)$, $A^j_{AJ}(X)$ will be used to indicate the subgroups of homologically trivial, resp. Abel--Jacobi trivial cycles.
For a morphism $f\colon X\to Y$, we will write $\Gamma_f\in A_\ast(X\times Y)$ for the graph of $f$.
The contravariant category of Chow motives (i.e., pure motives with respect to rational equivalence as in \cite{Sc}, \cite{MNP}) will be denoted $\MM_{\rm rat}$.

We use $H^j(X)$ 
to indicate singular cohomology $H^j(X,\QQ)$, and $H_j(X)$ to indicate Borel--Moore homology $H_j^{BM}(X,\QQ)$.
\end{convention}

\section{Preliminaries}

%\subsection{Finite--dimensional motives}
%%
%%%%
%
%We refer to \cite{Kim}, \cite{An}, \cite{J4}, \cite{MNP} for the definition of finite--dimensional motive.
%An essential property of varieties with finite--dimensional motive is embodied by the nilpotence theorem.
%
%\begin{theorem}[Kimura \cite{Kim}]\label{nilp} Let $X$ be a smooth projective variety of dimension $n$ with finite--dimensional motive. Let $\Gamma\in A^n(X\times X)_{}$ be a correspondence which is numerically trivial. Then there exists $N\in\NN$ such that
%     \[ \Gamma^{\circ N}=0\ \ \ \ \in A^n(X\times X)_{}\ .\]
%\end{theorem}
%
% Actually, the nilpotence property (for all powers of $X$) could serve as an alternative definition of finite--dimensional motive, as shown by a result of Jannsen \cite[Corollary 3.9]{J4}.
%Conjecturally, any variety has finite--dimensional motive \cite{Kim}. We are still far from knowing this, but at least there are quite a few non--trivial examples.
%
%\begin{remark}
%It is a (somewhat embarrassing) fact that all examples known so far of finite-dimensional motives happen to be in the tensor subcategory generated by Chow motives of curves (i.e., they are ``motives of abelian type'' in the sense of \cite{V3}).
%That is, the finite--dimensionality conjecture is still unknown for any motive {\em not\/} generated by curves (on the other hand, there exist many motives not generated by curves, e.g., the motive of a general quintic surface \cite[7.6]{D}).  
%\end{remark}

  \subsection{Relative K\"unneth projectors}
  
  \begin{lemma}\label{relK} Let $\Ss\to B$ be as in notation \ref{fam}. There exist relative correspondences
    \[ \pi_0^\Ss\ ,\ \ \pi_2^\Ss\ ,\ \ \pi_4^\Ss\ \ \ \in A^2(\Ss\times_B \Ss)\ ,\]
    with the property that for each $b\in B$, the restriction
    \[ \pi_i^\Ss\vert_b := \pi_i^\Ss\vert_{S_b\times S_b}\ \ \ \in H^4(S_b\times S_b) \]
    is the $i$th K\"unneth component. Moreover,
     \[ (\pi_2^\Ss\vert_b)_\ast=\ide\colon\ \ \ A^2_{hom}(S_b)\ \to\ A^2_{hom}(S_b)\ .\]
     \end{lemma}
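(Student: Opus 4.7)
The plan is a direct construction of the relative K\"unneth projectors from a relative polarization, mimicking the standard construction for a single surface. First I would pick a relatively ample line bundle $\LL$ on $\Ss\to B$ and set $\eta := c_1(\LL)\in A^1(\Ss)$; by flatness of $\Ss\to B$ the self-intersection $d := (\eta|_{S_b})^2$ is a positive integer independent of $b$. Such $\LL$ is supplied, for the universal families of notation \ref{fam}, by the embedding of each $S_b$ in a fixed ambient weighted projective space (or by a power of the relative canonical bundle). Writing $p_1,p_2\colon\Ss\times_B\Ss\to\Ss$ for the two projections and $\Delta_{\Ss/B}\subset\Ss\times_B\Ss$ for the relative diagonal, I set
\[\pi_0^\Ss := \tfrac{1}{d}(p_2^*\eta)^2,\qquad \pi_4^\Ss := \tfrac{1}{d}(p_1^*\eta)^2,\qquad \pi_2^\Ss := \Delta_{\Ss/B} - \pi_0^\Ss - \pi_4^\Ss,\]
all elements of $A^2(\Ss\times_B\Ss)$.

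To check the fiberwise K\"unneth identification, I restrict: $\pi_0^\Ss|_b = \tfrac{1}{d}\,[S_b]\times H_b^2$ has cohomology class $1\times[\mathrm{pt}]$, the $\pi_0$ K\"unneth component of $S_b$; symmetrically $\pi_4^\Ss|_b$ represents $[\mathrm{pt}]\times 1 = \pi_4$. Since the surfaces appearing in notation \ref{fam} are regular (Horikawa-type surfaces on or near the Noether line have irregularity $q=0$), $H^1(S_b)=H^3(S_b)=0$, so cohomologically $[\Delta_b] = \pi_0+\pi_2+\pi_4$, and therefore $\pi_2^\Ss|_b$ represents the middle K\"unneth component $\pi_2$.

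For the last assertion, $\Delta_{\Ss/B}|_b$ is the honest diagonal of $S_b$, so $(\pi_2^\Ss|_b)_* = \mathrm{id} - (\pi_0^\Ss|_b)_* - (\pi_4^\Ss|_b)_*$ on $A^2(S_b)$. The projection formula yields $(\pi_0^\Ss|_b)_*(\alpha) = \tfrac{\deg\alpha}{d}\,H_b^2$, which vanishes on $A^2_{hom}(S_b)$; and $(\pi_4^\Ss|_b)_*(\alpha) = \tfrac{1}{d}(p_2)_*p_1^*(H_b^2\cdot\alpha) = 0$ on all of $A^2(S_b)$ because $H_b^2\cdot\alpha \in A^4(S_b)=0$ for dimension reasons. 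Combining, $(\pi_2^\Ss|_b)_*$ acts as the identity on $A^2_{hom}(S_b)$. No serious obstacle arises here; the only point requiring care is producing a genuinely global relatively ample line bundle on the total space $\Ss$, and this is precisely where the explicit universal family description (complete intersections in a fixed weighted projective space) enters.
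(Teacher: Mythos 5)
Your construction is exactly the paper's proof: choose a relatively ample divisor, set $\pi_0^\Ss$ and $\pi_4^\Ss$ proportional to $p_1^*(H^2)$ and $p_2^*(H^2)$, let $\pi_2^\Ss$ be the complement in the relative diagonal, and verify the action fibrewise (the paper leaves this verification as ``readily checked''). The only quibble is a labelling convention: with the covariant action $\Gamma_*\alpha=(p_2)_*\bigl(\Gamma\cdot p_1^*\alpha\bigr)$ implicit in the lemma, $\tfrac{1}{d}p_1^*(H^2)$ restricts to the $0$th K\"unneth projector and $\tfrac{1}{d}p_2^*(H^2)$ to the $4$th, so your $\pi_0^\Ss$ and $\pi_4^\Ss$ are interchanged relative to the paper's (and the standard) convention --- harmless, since $\pi_2^\Ss$ and hence the assertion about $A^2_{hom}(S_b)$ are unaffected.
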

     
    \begin{proof} This is well--known, and holds more generally for any family of surfaces with $H^1(S_b)=0$. Let $H\in A^1(\Ss)$ be a relatively ample divisor, and let $d:=\deg (H^2\vert_{S_b})$. One defines
      \[ \begin{split}   \pi_0^\Ss &:= {1\over d} (p_1)^\ast(H^2)\ ,\\
                               \pi_4^\Ss &:= {1\over d} (p_2)^\ast(H^2)\ ,\\
                               \pi_2^\Ss &:= \Delta_\Ss - \pi_0^\Ss -\pi_4^\Ss\ \ \ \ \in\ A^2(\Ss\times_B \Ss)\ .\\
                               \end{split}\]
                   It is readily checked this does the job.            
        \end{proof}

 \subsection{Transcendental part of the motive}
 
 \begin{theorem}[Kahn--Murre--Pedrini \cite{KMP}]\label{t2} Let $S$ be any smooth projective surface, and let $h(S)\in\MM_{\rm rat}$ denote the Chow motive of $S$.
There exists a self--dual Chow--K\"unneth decomposition $\{\pi_i\}$ of $S$, with the property that there is a further splitting in orthogonal idempotents
  \[ \pi_2= \pi_2^{alg}+\pi_2^{tr}\ \ \hbox{in}\ A^2(S\times S)_{}\ .\]
  The action on cohomology is
  \[  (\pi_2^{alg})_\ast H^\ast(S)= N^1 H^2(S)\ ,\ \ (\pi_2^{tr})_\ast H^\ast(S) = H^2_{tr}(S)\ ,\]
  where the transcendental cohomology $H^2_{tr}(S)\subset H^2(S)$ is defined as the orthogonal complement of $N^1 H^2(S)$ with respect to the intersection pairing. The action on Chow groups is
  \[ (\pi_2^{alg})_\ast A^\ast(S)_{}= N^1 H^2(S)\ ,\ \ (\pi_2^{tr})_\ast A^\ast(S) = A^2_{AJ}(S)_{}\ .\]  
 This gives rise to a well--defined Chow motive
  \[ h^{tr}_2(S):= (S,\pi_2^{tr},0)\ \subset \ h(S)\ \ \in\MM_{\rm rat}\ ,\]
  the so--called {\em transcendental part of the motive of $S$}.
  \end{theorem}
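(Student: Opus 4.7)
The plan is to proceed in two stages, following the Kahn--Murre--Pedrini strategy. First, establish Murre's refined Chow--K\"unneth decomposition $\{\pi_i\}_{i=0}^4$ for the surface $S$; then further split the middle projector $\pi_2$ using the N\'eron--Severi lattice.

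\emph{Murre's Chow--K\"unneth for surfaces.} Pick a very ample divisor $H$ on $S$ with $d:=H^2$. Define $\pi_0:=\frac{1}{d}\,[S]\times H^2$ and $\pi_4:=\frac{1}{d}\,H^2\times [S]$ in $A^2(S\times S)$; these are idempotents realizing the K\"unneth projectors onto $H^0$ and $H^4$. For the odd--dimensional parts, use the Albanese variety: choose a smooth curve $C\subset S$ arising as a complete intersection of sufficiently ample hyperplane sections so that the induced map $\alb(C)\to\alb(S)$ is surjective; then Murre's construction, using a splitting of this surjection, produces explicit correspondences $\pi_1\in A^2(S\times S)$ whose cohomological realization is the K\"unneth projector onto $H^1(S)$. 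Set $\pi_3:={}^t\pi_1$ and $\pi_2:=\Delta_S-\pi_0-\pi_1-\pi_3-\pi_4$. The verification that $\{\pi_i\}$ is a system of mutually orthogonal idempotents \emph{in $A^2(S\times S)$} (not merely modulo homological equivalence) is the heart of Murre's original argument, and self--duality ${}^t\pi_i=\pi_{4-i}$ is built in by construction.

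\emph{Splitting $\pi_2$ via the N\'eron--Severi lattice.} By the Hodge index theorem the intersection pairing is non--degenerate on $NS(S)_\QQ$. Choose a $\QQ$--basis $D_1,\dots,D_\rho$ of $NS(S)_\QQ$, and let $(q^{ij})$ denote the inverse of the intersection matrix $(D_i\cdot D_j)$. Define
\[ \pi_2^{alg}:=\sum_{i,j} q^{ij}\,D_i\times D_j\ \in\ A^2(S\times S)\ , \]
and set $\pi_2^{tr}:=\pi_2-\pi_2^{alg}$. A direct calculation with the projection formula shows $\pi_2^{alg}\circ\pi_2^{alg}=\pi_2^{alg}$, and its action on cohomology is the orthogonal projector onto $N^1 H^2(S)\subset H^2(S)$. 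Orthogonality of $\pi_2^{alg}$ with $\pi_0,\pi_1,\pi_3,\pi_4$ is a short computation: for degree reasons $\pi_0,\pi_4$ contribute only through the $H^0\oplus H^4$ part, with which each $D_i$ has trivial cup--product pairing; and orthogonality with $\pi_1,\pi_3$ follows from $H^1(S)$--factors being orthogonal to $H^2$--factors. Hence $\pi_2^{tr}$ is automatically an idempotent, orthogonal to all other $\pi_i$, projecting cohomologically onto $H^2_{tr}(S)=N^1H^2(S)^\perp$.

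\emph{Action on Chow groups.} Since $\pi_0,\pi_4$ act on $A^\ast(S)$ through the degree map and $\pi_1,\pi_3$ factor through $\alb(S)$, they all kill $A^2_{AJ}(S)$. On the other hand, $(\pi_2^{alg})_\ast$ sends an element $\alpha\in A^\ast(S)$ to $\sum_{i,j}q^{ij}(\alpha\cdot D_i)\,D_j\in NS(S)_\QQ$, whose image in cohomology is exactly $N^1 H^2(S)$. Combining these facts with $\Delta_S=\sum_i\pi_i=\pi_0+\pi_1+\pi_2^{alg}+\pi_2^{tr}+\pi_3+\pi_4$ forces $(\pi_2^{tr})_\ast$ to restrict to the identity on $A^2_{AJ}(S)$, giving the stated formulas. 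The submotive $h_2^{tr}(S):=(S,\pi_2^{tr},0)$ is then well--defined by idempotence of $\pi_2^{tr}$.

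\emph{Main obstacle.} The delicate point is not the refinement $\pi_2=\pi_2^{alg}+\pi_2^{tr}$, which is formal once the non--degeneracy of the intersection form on $NS(S)_\QQ$ is in place. Rather, it is the preliminary construction of a Murre decomposition $\{\pi_i\}$ whose mutual orthogonality relations hold at the level of rational equivalence on $S\times S$. This requires a careful choice of auxiliary curve $C$ and of a splitting of $\alb(C)\twoheadrightarrow\alb(S)$ so that the compositions $\pi_i\circ\pi_j$ for $i\ne j$ vanish in $A^2(S\times S)$ and not merely in $H^4(S\times S)$; this is precisely the content of the Kahn--Murre--Pedrini refinement, and is the step I would expect to consume most of the technical work.
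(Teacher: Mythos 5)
Your proposal is essentially the Kahn--Murre--Pedrini construction itself, which is exactly what the paper relies on: the paper's ``proof'' of this statement is a two--line citation of \cite[Proposition 7.2.1]{KMP} (for the refined Chow--K\"unneth decomposition) and \cite[Proposition 7.2.3]{KMP} (for the splitting of $\pi_2$), so in approach you and the paper coincide, and you correctly locate the main technical burden in the Chow--level orthogonality of Murre's $\pi_0,\dots,\pi_4$.

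One point in your ``formal'' part deserves repair. You justify the orthogonality of $\pi_2^{alg}=\sum_{i,j}q^{ij}\,D_i\times D_j$ with $\pi_1,\pi_3$ by a cohomological pairing argument, but the relations are needed in $A^2(S\times S)$, not merely in $H^4(S\times S)$. Concretely, $\pi_1\circ\pi_2^{alg}=\sum_{i,j}q^{ij}\,D_i\times(\pi_1)_\ast(D_j)$, and $(\pi_1)_\ast(D_j)$ is only homologically trivial in general (it lies in $A^1_{hom}(S)$, the $\Pic^0$--part of the chosen representative), so the composition need not vanish modulo rational equivalence for an arbitrary choice of lifts $D_j$ of a basis of $NS(S)_\QQ$. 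The standard remedy, implicit in \cite{KMP}, is to replace each $D_j$ by $(\pi_2)_\ast(D_j)$: this does not change the intersection matrix (intersection numbers depend only on cohomology classes), and then $\pi_1\circ\pi_2^{alg}$ and $\pi_2^{alg}\circ\pi_3$ vanish in $A^2(S\times S)$ by the orthogonality $\pi_1\circ\pi_2=\pi_2\circ\pi_3=0$ of the underlying Chow--K\"unneth projectors (orthogonality with $\pi_0,\pi_4$ is indeed automatic for dimension reasons, as you say). With this adjustment your outline matches the cited argument; the remaining heavy lifting (Murre's $\pi_1$, $\pi_3$ and their mutual orthogonality at the level of rational equivalence) is, as you note, precisely the content of the reference.
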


\begin{proof} Let $\{\pi_i\}$ be a Chow--K\"unneth decomposition as in \cite[Proposition 7.2.1]{KMP}. The assertion then follows from \cite[Proposition 7.2.3]{KMP}.
\end{proof}

\section{Triple $K3$ burgers}

\subsection{Definition}

\begin{definition}\label{def} A surface $S$ is called a {\em triple $K3$ burger\/} if the following conditions are satisfied:

\noindent
(0) $S$ is minimal, of general type, with $q=0$ and $p_g=3$;

\noindent
(\rom1) there exist involutions $\sigma_j\colon S\to S$ ($j=0,1,2$) that commute with one another, and such that the quotients 
  \[  \bar{X}_j:= S/<\sigma_j>\ \ \  (j=0,1,2)\] 
  are birational to a $K3$ surface $X_j$;

\noindent
(\rom2) there is an isomorphism
  \[ \bigl( (p_0)^\ast , (p_1)^\ast, (p_2)^\ast\bigr)\colon\ \ \ H^2(\bar{X}_0,\OO)\oplus H^2(\bar{X}_1,\OO)\oplus H^2(\bar{X}_2,\OO)\ \xrightarrow{\cong}\ 
     H^2(S,\OO)\ ,\]
  where $p_j\colon S\to \bar{X}_j$ denotes the quotient morphism;
  
 \noindent
 (\rom3) the involutions $\sigma_j$ respect the canonical divisor:
  \[  (\sigma_j)^\ast K_S =  K_S\ \ \ ,\ j=0,1,2\ .\]  
  
  \end{definition}

\begin{remark}\label{Htr} Let $\Psi_j\in A^2(X_j\times S)$ ($j=0,1,2$) be the correspondence defined by the diagram
  \[ \begin{array}[c]{ccc}
                 &&  S\\
                 &&\downarrow\\
                 X_j&\to&\bar{X}_j\\
                 \end{array}\]
       where $X_j\to\bar{X}_j$ is a resolution of singularities and $X_j$ is a $K3$ surface.          
                 
       Since the $\bar{X}_j$ have only quotient singularities and quotient singularities are rational, condition (\rom2) of definition \ref{def} is equivalent to asking for an isomorphism
        \[      \bigl( (\Psi_0)_\ast , (\Psi_1)_\ast, (\Psi_2)_\ast\bigr)\colon\ \ \ H^2({X}_0,\OO)\oplus H^2({X}_1,\OO)\oplus H^2({X}_2,\OO)\ \xrightarrow{\cong}\ 
     H^2(S,\OO)\ .\]
     
    Also, since $(\Psi_j)_\ast$ is a homomorphism of Hodge structures, condition (\rom2) is equivalent to an isomorphism
    \[        \bigl( (\Psi_0)_\ast , (\Psi_1)_\ast, (\Psi_2)_\ast\bigr)\colon\ \ \ H^2_{tr}({X}_0)\oplus H^2_{tr}({X}_1)\oplus H^2_{tr}({X}_2)\ \xrightarrow{\cong}\ 
     H^2_{tr}(S)\ .\]  
     (Here, by definition $H^2_{tr}()\subset H^2()$ is the orthogonal complement of the N\'eron--Severi group under the cup product pairing.)

Also, since $(p_j)^\ast H^2(\bar{X}_j)$ is contained in the $\sigma_j$--invariant part of $H^2(S)$, condition (\rom2) is equivalent to the condition
  \begin{equation}\label{this}  H^2_{tr}(S)= H^2_{tr}(S)^{+--}\oplus H^2_{tr}(S)^{-+-}\oplus H^2_{tr}(S)^{--+}\ ,\end{equation}
  where $H^2_{tr}(S)^{+--}$ denotes the part of $H^2_{tr}(S)$ where $\sigma_0$ acts as the identity and $\sigma_1, \sigma_2$ act as minus the identity, and the other summands are defined similarly. 
  
  (This uses some Hodge theory. E.g., let us consider $H^2_{tr}(S)^{++-}$. This is a Hodge substructure of $H^2_{tr}(S)$, and so if it is non--trivial, it must have $\gr^0_F$ of dimension $\ge 1$. But then, as it is contained in the image of $H^2_{tr}(X_0)$, it must have $\gr^0_F$ of dimension $=1$. This implies that 
  \[ (\Psi_0)_\ast H^2_{tr}(X_0)=  H^2_{tr}(S)^{++-}\ ,\]
  as both sides are Hodge substructures of $H^2_{tr}(S)$ with $\dim \gr^0_F=1$. But for the same reason, we have
   \[ (\Psi_1)_\ast H^2_{tr}(X_1)=  H^2_{tr}(S)^{++-}\ ,\]  
   and so 
   \[ (\Psi_0)_\ast H^2_{tr}(X_0)=   (\Psi_1)_\ast H^2_{tr}(X_1)\ \ \ \hbox{in}\ H^2_{tr}(S)\ .\]
   But this is absurd, because it contradicts the surjectivity in condition (\rom2). We conclude that $H^2_{tr}(S)^{++-}$ must be zero. Applying the same reasoning to the other eigenspaces, one arrives at the isomorphism (\ref{this}).)
   
   \end{remark}

\begin{remark} Definition \ref{def} is directly inspired by the definition of Todorov surfaces \cite{Tod}, \cite{Kun}, \cite{Mor}, \cite{Rito}. 

One could extend definition \ref{def} to surfaces of any geometric genus: a surface $S$ is called an $m$--tuple $K3$ burger if $p_g(S)=m$ and there exist $m$ involutions $\sigma_1,\ldots,\sigma_m$ such that the quotients $S/<\sigma_j>$ are birational to $K3$ surfaces and their transcendental cohomology generates $H^2_{tr}(S)$ as in condition (\rom2). For $m=1$ (i.e., ``simple $K3$ burgers''), one obtains certain Todorov surfaces. (NB: There is a slight difference with the definition of Todorov surfaces; in the definition of a Todorov surface one merely asks, instead of (\rom3), that the involution $\sigma$ is composed with the bicanonical map).

Surfaces similar to the case $m=2$ of definition \ref{def} (i.e., ``double $K3$ burgers'') have been studied in \cite{Gar}, \cite{PZ}, \cite{hor}.
\end{remark}

\begin{remark} A closely related construction (which also inspired the present note) appears in recent work of Garbagnati \cite[Section 6.1]{Gar}.  
Let $S$ be the minimal model of the surface $U_{10}$ of \cite[Section 6.1]{Gar}. Then $S$ satisfies conditions (0), (\rom1) and (\rom2) of definition \ref{def} (and I am not sure about condition (\rom3)). Also, it follows from \cite[Theorem 3.1]{Gar} that $K_S^2=9$, and so $S$ is not among the cases covered by theorem \ref{main}.

The fact that $K_S^2=9$ means that $S$ is quite far from the Noether line; hence there is (as far as I am aware) not a nice and simple, Horikawa--style description of the canonical model of $S$ as a weighted complete intersection. Due to the lack of such a description, the method of ``spread'' does not seem to apply to $S$, and I do not know how to handle the Chow groups of $S$.
\end{remark}

\begin{remark} Condition (\rom3) in definition \ref{def} is admittedly somewhat ad hoc. The reason I have added condition (\rom3) is that otherwise, I am not able to prove theorem \ref{main}. 

(More precisely: condition (\rom3) ensures that the involutions $\sigma_j$ come from involutions of the ambient space (which will be a weighted projective space); as such, the involutions exist family--wise, which will be crucial to the argument.)
\end{remark}

\begin{remark} Todorov surfaces have been classified: there are $ 11$ irreducible families, each of dimension $12$ \cite{Mor}. Likewise, it is perhaps possible to classify triple $K3$ burgers. The next subsection provides a first step.
\end{remark}

\subsection{Structural results}

\begin{notation}\label{nots} Let $\PP$ be some weighted projective space, with weighted homogeneous coordinates $[x_0:x_1:\cdots:x_n]$. We define involutions $s_j\in\aut(\PP)$, $j=0,\ldots,n$, by
  \[ s_j [x_0:\ldots:x_n]  = [x_0:\cdots:-x_j:\ldots:x_n]\ .\]
  Similarly, for $0\le i<j\le n$ we define involutions $s_{ij}\in\aut(\PP)$ by
   \[ s_{ij} [x_0:\ldots:x_n]  = [x_0:\ldots: -x_i:x_{i+1}:\ldots :  -x_j:x_{j+1}:\ldots:x_n]\ .\]
   Similarly, we define involutions $s_{ijk}$ involving $3$ minus signs.
  \end{notation}

\begin{proposition}\label{2} Let $S$ be a triple $K3$ burger with $K^2=2$. 
%Then $S$ is isomorphic to a smooth degree $8$ hypersurface in $\PP(1^3,4)$ of the form
%  \[   f(x_0,x_1,x_2,x_3)=0 \ ,\]
%  where the $x_0, x_1,x_2$ only occur in even degrees. The involutions $\sigma_j$ are given by
%  \[  \begin{split}  \sigma_0 [x_0:x_1:x_2:x_3] &= [-x_0:x_1:x_2:x_3]\ ,\\
%                            \sigma_1  [x_0:x_1:x_2:x_3] &= [x_0:-x_1:x_2:x_3]\ ,\\
%                            \sigma_2  [x_0:x_1:x_2:x_3] &= [x_0:x_1:-x_2:x_3]\ .\\
%                 \end{split}\]           
%                            
%    Conversely, let $S_8$ be a smooth degree $8$ hypersurface in $\PP(1^3,4)$ of the form
%    \[ f(x_0,x_1,x_2,x_3)=0\ ,\]
%    where $x_0,x_1,x_2$ only occur in even degrees. Then $S_8$
%     is a triple $K3$ burger with $K^2=2$.
Then $S$ is  isomorphic to a smooth degree $8$ hypersurface in $\PP(1^3,4)$ invariant under $G=<\sigma_0,\sigma_1,\sigma_2>$, where $\{\sigma_0,\sigma_1,\sigma_2\}$ are one of the following:

\noindent
(\rom1) \[ \{\sigma_0,\sigma_1,\sigma_2\} =\{ s_0,s_1,s_2\}\ .\]

\noindent
(\rom2)  \[ \{\sigma_0,\sigma_1,\sigma_2\} =\{ s_0,s_1,s_{01}\}\ .\]

\noindent
(\rom3)  \[ \{\sigma_0,\sigma_1,\sigma_2\} =\{ s_{01},s_{02},s_{0}\}\ .\]

\noindent
(\rom4)  \[ \{\sigma_0,\sigma_1,\sigma_2\} =\{ s_{01},s_{02},s_{12}\}\ .\]

Conversely, any such surface $S$ is a triple $K3$ burger with $K^2=2$, and the associated $K3$ surfaces are obtained as $\bar{X_j}=S/<\sigma_j>  $, where the $\sigma_j$ are as in (\rom1)--(\rom4).
\end{proposition}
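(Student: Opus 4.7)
The plan is to combine Horikawa's classification of minimal surfaces on the Noether line with the Hodge--theoretic consequences of conditions (\rom2) and (\rom3) to pin down both the ambient model of $S$ and the three involutions $\sigma_j$. I would begin by invoking Horikawa's structure theorem (\cite{Hor0}, \cite{Hor4}) for minimal surfaces of general type with $p_g=3$, $q=0$, $K^2=2$ (the Noether line $K^2=2p_g-4$): the canonical map is a double cover of $\PP^2$ branched along a smooth octic, and the canonical model of $S$ is the smooth weighted hypersurface
\[
 S=\{y^2=f_8(x_0,x_1,x_2)\}\ \subset\ \PP(1,1,1,4),
\]
with $K_S=\OO_\PP(1)|_S$. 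By (\rom3) each $\sigma_j$ acts linearly on $H^0(S,K_S)=\langle x_0,x_1,x_2\rangle$, and since the $\sigma_j$ commute they are simultaneously diagonalizable; after a linear change of the $x_i$ I may assume $x_0,x_1,x_2$ are common eigenvectors, so each $\sigma_j$ extends to a diagonal involution of $\PP(1^3,4)$ of the form $x_i\mapsto\epsilon_i^{(j)}x_i$, $y\mapsto\eta_j y$ with signs $\epsilon_i^{(j)},\eta_j\in\{\pm1\}$.

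Next I would pin down the signs. For $\bar X_j=S/\langle\sigma_j\rangle$ to be (birational to) a $K3$ surface, the ramification formula $\pi_j^\ast K_{\bar X_j}=K_S-R_j$ forces the $\sigma_j$-fixed divisor $R_j\subset S$ to lie in $|K_S|$. A direct fixed-locus computation in $\PP(1^3,4)$, using the rescaling equivalence $(x_0,x_1,x_2,y)\sim(-x_0,-x_1,-x_2,y)$ (valid since $(-1)^4=1$), shows this happens only when $\eta_j=+1$ and $\sigma_j$ has exactly one fixed and two negated $x$-coordinates; the remaining sign patterns produce only isolated fixed points on $S$, giving $K^2_{\bar X_j}=K_S^2/2=1$ rather than $0$. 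Invoking Remark \ref{Htr}, condition (\rom2) gives the decomposition
\[
 H^2_{tr}(S)=H^2_{tr}(S)^{+--}\oplus H^2_{tr}(S)^{-+-}\oplus H^2_{tr}(S)^{--+},
\]
which, restricted to $H^{2,0}(S)=H^0(K_S)$, forces the three fixed coordinates of $\sigma_0,\sigma_1,\sigma_2$ to be distinct. After reindexing, $\sigma_j$ fixes $x_j$ and negates the other two; in the notation of \ref{nots} this is $s_{ik}$ with $\{i,j,k\}=\{0,1,2\}$, and equivalently $s_j$ via the rescaling above. The four triples (\rom1)--(\rom4) simply enumerate the consistent mixtures of representatives $s_\bullet$ versus $s_{\bullet\bullet}$ for the three non-identity elements of $G=\langle\sigma_0,\sigma_1\rangle\cong(\ZZ/2)^2$.

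For the converse I would verify that any smooth $G$-invariant degree-$8$ hypersurface $S\subset\PP(1^3,4)$ as in (\rom1)--(\rom4) satisfies definition \ref{def}: (0) follows from Horikawa; (\rom3) is built in; the fixed locus of each $\sigma_j$ on $S$ is $\{x_j=0\}\cap S\in|K_S|$ (generically a smooth genus-$3$ curve) together with two isolated $A_1$ points, and the ramification formula yields $K_{\bar X_j}=0$, so $\bar X_j$ is a nodal $K3$, confirming (\rom1); condition (\rom2) is the Hodge-theoretic eigenspace analysis of Remark \ref{Htr} run in reverse. The main obstacle will be keeping the signs straight: the intrinsic action of $\sigma_j$ on $K_S=\OO_S(1)$ picks up a twist (analogous to the $-1$ of the deck involution on the canonical $2$-form) relative to its apparent action on linear forms of $\PP(1^3,4)$, and this twist is precisely what makes $s_{ik}$ and $s_j$ describe the same involution of $S$ and produces the apparent multiplicity of cases (\rom1)--(\rom4) in the statement.
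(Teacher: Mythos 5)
Your proposal is correct in substance and follows the same skeleton as the paper's proof: Horikawa's identification of $S$ with a smooth octic in $\PP(1^3,4)$, simultaneous diagonalization of the commuting involutions acting on the canonical ring, elimination of the sign patterns that cannot yield $K3$ quotients, and the use of condition (\rom2) (via remark \ref{Htr}, restricted to $H^{2,0}$) to force the three surviving involutions to be distinct, with the converse argued the same way. Where you genuinely differ is the middle step: the paper decides which diagonal involutions give $K3$ quotients by a Griffiths residue computation of the action on $H^{2,0}(S)$ (signs on $\Omega$, on $f$, and on the generators $x_i\Omega/f$), whereas you argue geometrically via fixed loci and the ramification formula, demanding that the fixed divisor lie in $|K_S|$. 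Your route is more complete in one respect, since you consider and eliminate the patterns acting by $-1$ on $y$, which the paper passes over when it asserts the $\sigma_j$ have the form $s_i$, $s_{ij}$, $s_{012}$. One justification is off, though: the deck involution $(x,y)\mapsto(x,-y)$ does not have isolated fixed points --- its fixed locus on $S$ is the ramification curve, which lies in $|4K_S|$ --- so your blanket claim that the remaining sign patterns fix only isolated points fails for exactly that pattern; your own criterion $R_j\in|K_S|$ still excludes it (the quotient is $\PP^2$), so this is a repairable slip rather than a gap.

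A second divergence is one of bookkeeping, and here your version is the sharper one. You work modulo the rescaling $(x_0,x_1,x_2,y)\sim(-x_0,-x_1,-x_2,y)$, which is indeed the trivial automorphism of $\PP(1^3,4)$ since $(-1)^4=1$; consequently $s_{ik}=s_j$ for $\{i,j,k\}=\{0,1,2\}$ and $s_{012}=\ide$, so the four triples (\rom1)--(\rom4) are different labels for the single configuration $\{\sigma_0,\sigma_1,\sigma_2\}=\{s_0,s_1,s_2\}$ with $\sigma_0\sigma_1\sigma_2=\ide$ and $G\cong(\ZZ/2\ZZ)^2$. This still proves the proposition as stated (the classified triple is one of --- in fact each of --- the listed ones), but note that it conflicts with the paper's own reading, which presents the four cases as genuinely distinct (and, in the remark following the proposition and in the proof of the main theorem, treats (\rom1), (\rom3), (\rom4) as carrying different burger structures with $\sigma_2\neq\sigma_0\sigma_1$); your identification shows that distinction is only apparent, so be explicit about it if you write this up.
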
                            

\begin{proof} Since $S$ is minimal, of general type, with $K^2=2$ and $p_g=3$, we know that $S$ is isomorphic to a smooth degree $8$ hypersurface in $\PP:=\PP(1^3,4)$
\cite{Hor}. Since the involutions $\sigma_j$ ($j=0,1,2$) preserve the polarization $K_S$, they are induced by involutions of $\PP$. Let $[x_0:x_1:x_2:x_3]$ be weighted homogeneous coordinates for $\PP$. After a projective transformation, we may suppose the involutions are defined by adding a minus sign in front of one or two or three of the $x_i$, i.e. the $\sigma_j$ are of the form $s_i, s_{ij}, s_{012}$, where $i,j\in\{0,1,2\}$.

Griffiths residue calculus (which also exists for weighted projective hypersurfaces, cf. \cite{Dol}, \cite{BC}) shows that $H^{0,2}(S)$ is generated by the image under the residue map of the holomorphic forms with poles 
  \begin{equation}\label{gen}  x_0 \Omega/f\ ,\ \ \ x_1 \Omega/f\ ,\ \ \ x_2 \Omega/f\ .\end{equation}
Here, $f$ is a defining equation for $S$ and $\Omega$ is the standard $3$--form 
 \[ \Omega:=\sum_{i=0}^2 (-1)^i x_i dx_0\wedge\ldots \hat{dx_i}\ldots dx_3 - 4 x_3 dx_0\wedge dx_1\wedge dx_2 \ \]
 \cite[2.1.3]{Dol}, \cite[Example 9.4]{BC}.

The involution $s_{012}$ acts as $-1$ on the form $\Omega$. Hence, the involution $s_{012}$ acts either as $(+1,+1,+1)$ or as $(-1,-1,-1)$ on the three generators (\ref{gen}) (depending on whether $s_{012}$ acts as $+1$ or as $-1$ on $f$).
As such, the quotient $S/<s_{012}>$ can not be a $K3$ surface, and so $s_{012}$ is not among the $\sigma_j$.

Suppose now the $\sigma_j$ are all of type $s_i$. The involution $s_i$ acts on $\Omega$ as $-1$, and on $f$ as $\pm 1$. Considering the action on generators (\ref{gen}), clearly the only possibility is (\rom1).

Suppose next that exactly one of the $\sigma_j$ is of type $s_{ij}$ (and so the others are of type $s_i$). Up to a coordinate change, we may suppose $\sigma_2=s_{01}$. The involution $ s_{01}$ acts on $\Omega$ as $+1$, and on $f$ as $\pm 1$. Since the quotient $S/<s_{01}>$ is $K3$, the action on $f$ has to be the identity, and so $s_{01}$ acts on the generators (\ref{gen}) as $(-1,-1,+1)$. Clearly, the only possibility for $\{\sigma_0,\sigma_1\}$ is now $\{s_0,s_1\}$, and so we are in case (\rom2).

Next, let us suppose that exactly two of the $\sigma_j$ are of type $s_{ij}$, say $\sigma_0=s_{01}$ and $\sigma_1=s_{02}$. 
As per above, the case $s_{ij}(f)=-f$ can be excluded.
We conclude that $\sigma_0$ acts on the generators (\ref{gen}) as $(-1,-1,+1)$, and $\sigma_1$ acts as $(-1,+1,-1)$. The remaining involution $\sigma_2=s_i$ should act as $(+1,-1,-1)$, and so $\sigma_2=\sigma_0$, and we are in case (\rom3).

Finally, if all three $\sigma_j$ are of type $s_{ij}$, they need to be different (for otherwise, there is a generator (\ref{gen}) not preserved by any of the $\sigma_j$). Hence, we are in case (\rom4). 

The converse is clear from the above argument. (Note that the involutions $\sigma_j$ commute because they commute as automorphisms of $\PP$.)
 \end{proof}

\begin{remark} Triple $K3$ burgers as in proposition \ref{2}(\rom1) form a family of moduli dimension $6$. 
Indeed, after a change of variables the equation defining $S$ is of the form
   \[   (x_3)^2= f(x_0,x_1,x_2) \ ,\]
  i.e. $S$ is a double cover of the plane branched along an octic $f$, where $x_0,x_1,x_2$ occur only in even degrees. This family has $6$ moduli. 
     
(The degree $8$ equation
  \[   (x_3)^2= f(x_0,x_1,x_2) \ \]  
  (with $x_0,x_1,x_2$ occurring in even degree)
  depends on $15$ parameters, so smooth hypersurfaces of this type correspond to an open in $\PP^{14}$. The group $PGL(3)$ acts on these hypersurfaces, and so we get $14-8=6$ moduli.)

One element in this family is the weighted Fermat hypersurface
  \[ x_0^8 + x_1^8 +x_2^8 + x_3^2=0\ .\]
  
  The surfaces of proposition \ref{2}(\rom3) and (\rom4) are the same family as that of (\rom1); only the associated $K3$ surfaces are different, so there are different ``burger structures'' on elements of this family.
   \end{remark}

\begin{proposition}\label{3} Let $S$ be a triple $K3$ burger with $K^2=3$ and such that the canonical divisor is base--point free. Then $S$ is  isomorphic to a smooth degree $6$ hypersurface in $\PP(1^3,2)$ invariant under $G=<\sigma_0,\sigma_1,\sigma_2>$, where $\{\sigma_0,\sigma_1,\sigma_2\}$ are one of the following:

\noindent
(\rom1) \[ \{\sigma_0,\sigma_1,\sigma_2\} =\{ s_0,s_1,s_2\}\ .\]

\noindent
(\rom2)  \[ \{\sigma_0,\sigma_1,\sigma_2\} =\{ s_0,s_1,s_{01}\}\ .\]

\noindent
(\rom3)  \[ \{\sigma_0,\sigma_1,\sigma_2\} =\{ s_{01},s_{02},s_{0}\}\ .\]

\noindent
(\rom4)  \[ \{\sigma_0,\sigma_1,\sigma_2\} =\{ s_{01},s_{02},s_{12}\}\ .\]

Conversely, any such surface $S$ is a triple $K3$ burger with $K^2=3$, and the associated $K3$ surfaces are obtained as $\bar{X_j}=S/<\sigma_j>  $, where the $\sigma_j$ are as in (\rom1)--(\rom4).
  \end{proposition}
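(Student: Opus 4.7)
The proof will run in very close parallel to the proof of Proposition \ref{2}, since the role of $\PP(1^3,4)$ and degree $8$ is now played by $\PP(1^3,2)$ and degree $6$, while the structural analysis of the involutions relies on the same three generators of $H^{0,2}(S)$.

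First I would invoke Horikawa's classification of surfaces of general type on the line $K^2=2p_g-3$: for $p_g=3$, $q=0$, $K^2=3$ with $\vert K_S\vert$ base--point free, the canonical model of $S$ is a smooth degree $6$ hypersurface in $\PP:=\PP(1^3,2)$ (the bicanonical triple cover of $\PP^2$), cf. \cite{Hor}. Condition (\rom3) of definition \ref{def} says each $\sigma_j$ preserves $K_S$, hence acts on $H^0(S,K_S)=H^0(\PP,\OO(1))$ and extends to an involution of $\PP$. Since the three $\sigma_j$ commute, we may simultaneously diagonalize their action on the coordinates $[x_0\!:\!x_1\!:\!x_2\!:\!x_3]$. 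After such a projective transformation, each $\sigma_j$ is one of the sign--change involutions $s_i$, $s_{ij}$, $s_{012}$ of notation \ref{nots}.

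Next I would apply Griffiths residue calculus for weighted projective hypersurfaces \cite{Dol}, \cite{BC}. The standard $3$--form on $\PP(1^3,2)$ is
\[ \Omega=\sum_{i=0}^2(-1)^i x_i\, dx_0\wedge\cdots\widehat{dx_i}\cdots\wedge dx_3 - 2 x_3\, dx_0\wedge dx_1\wedge dx_2\ ,\]
of weighted degree $5$; since $\deg f=6$, the space $H^{0,2}(S)$ is generated by the residues of
\[ x_0\,\Omega/f\ ,\ \ x_1\,\Omega/f\ ,\ \ x_2\,\Omega/f\ .\]
Thus the generators are exactly the same three monomials that appeared in the proof of proposition \ref{2}. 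A direct check (identical to the one carried out there) gives: $s_i$ acts as $-1$ on $\Omega$; $s_{ij}$ acts as $+1$ on $\Omega$; and $s_{012}$ acts as $-1$ on $\Omega$.

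With these inputs, the case analysis is verbatim that of the proof of proposition \ref{2}. One first rules out $s_{012}$: it acts either as $(+,+,+)$ or $(-,-,-)$ on the three generators above (according to the sign of its action on $f$), so the quotient surface has $p_g\in\{0,3\}$ and cannot be a $K3$. Then one distinguishes cases by the number of $\sigma_j$'s of type $s_{ij}$ (zero, one, two, three), and uses that for each $\sigma_j$ the quotient $S/\langle\sigma_j\rangle$ being birational to a $K3$ forces the action on $H^{0,2}(S)$ to have a one--dimensional invariant subspace. A small bookkeeping of signs $(\pm 1,\pm 1,\pm 1)$ on the generators $x_i\Omega/f$ then leaves exactly the four possibilities (\rom1)--(\rom4). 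The converse direction (that each configuration in (\rom1)--(\rom4) does define a triple $K3$ burger) follows from the same sign computation, noting that the $\sigma_j$ commute because they already commute as automorphisms of $\PP$.

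The only real point where one has to be attentive is the Horikawa structural statement for $K^2=3$ and the fact that the generators of $H^{0,2}(S)$ are precisely $x_0,x_1,x_2$ times $\Omega/f$; once these are granted the analysis is mechanical and mirrors proposition \ref{2}. I do not see a serious obstacle beyond carefully transcribing the sign calculation.
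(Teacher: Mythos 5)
Your proposal is correct and follows essentially the same route as the paper: the structure theorem identifying $S$ with a degree $6$ hypersurface in $\PP(1^3,2)$ (the paper cites Iliev \cite{Ili} rather than Horikawa for this step), followed by the same Griffiths residue sign analysis on the generators $x_0\Omega/f,\ x_1\Omega/f,\ x_2\Omega/f$, exactly as in the proof of proposition \ref{2}. The only cosmetic slips are the attribution and calling the canonical triple cover of $\PP^2$ ``bicanonical''; neither affects the argument.
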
      

\begin{proof} Since $S$ is minimal, of general type, with $K^2=p_g=3$ and base point free canonical divisor, we know that $S$ is isomorphic to a degree $6$ hypersurface in $\PP(1^3,2)$
%or a bidegree $(3,6)$ complete intersection in $\PP(1^3,2,3 )$ 
\cite{Ili}. 

To classify the possible involutions, one proceeds exactly as in the proof of proposition \ref{2}.
                        \end{proof}

\begin{remark} Triple $K3$ burgers with $K^2=3$ and $K_S$ base--point free form a family of dimension $4$. 
(Indeed, under the natural map
  \[ \PP(1^3,2)\ \to\ \PP(2^4)\ ,\]
the hypersurfaces as in proposition \ref{3} correspond to degree $6$ hypersurfaces in $\PP(2^4)$. But under the natural isomorphism
   \[  \PP(2^4)\ \xrightarrow{\cong}\ \PP(1^4)=\PP^3\ ,\]
  the degree $6$ hypersurfaces in $\PP(2^4)$ correspond to degree $3$ hypersurfaces in $\PP^3$, for which there are $4$ moduli.)

We note that there is a subfamily given by triple covers of the plane, and this subfamily has moduli dimension $1$.

(The degree $6$ equation
  \[  (x_3)^3= f(x_0,x_1,x_2) \ \]
  with $x_0, x_1,x_2$ occurring in even degree
  depends on $10$ parameters. We get $10-1-\dim PGL(3)=1$.)
  
  One element in the family (which is also in the subfamily of triple planes) is given by the weighted Fermat hypersurface
   \[  x_0^6+x_1^6+x_2^6+x_3^3=0\ .\]
\end{remark}

\begin{remark} I have not been able to classify triple $K3$ burgers with $K^2=3$ without the assumption that $K_S$ be base point free. When $K_S$ is {\em not\/} base--point free, it is known \cite{Ili} there is exactly one base--point, and the canonical model of $S$ is isomorphic to a bidegree $(3,6)$ complete intersection in 
$\PP(1^3,2,3 )$. However, determining the possible involutions $\sigma_j$ in this case seems to get messy.

Similarly, triple $K3$ burgers with $K^2=4$ and $K_S$ base point free are complete intersections in a weighted projective space \cite{Reid}. I have not been able to classify them.
\end{remark}

%\begin{proposition}\label{4} Let $S$ be a triple $K3$ burger with $K^2=4$, and assume that the canonical map of $S$ is base--point free. Then $S$ is isomorphic to a smooth complete intersection in $\PP(1^3,2^2)$ of the form
%  \[  f(x_0,x_1,x_2,x_3,x_4)= g(x_0,x_1,x_2,x_3,x_4)=0  \ ,\]
%  where $f$ and $g$ are of degree $4$ and the $x_0, x_1,x_2$ only occur in even degrees. The involutions $\sigma_j$ are given by
%  \[  \begin{split}  \sigma_0 [x_0:x_1:x_2:x_3:x_4] &= [-x_0:x_1:x_2:x_3:x_4]\ ,\\
%                            \sigma_1  [x_0:x_1:x_2:x_3:x_4] &= [x_0:-x_1:x_2:x_3:x_4]\ ,\\
%                            \sigma_2  [x_0:x_1:x_2:x_3:x_4] &= [x_0:x_1:-x_2:x_3:x_4]\ .\\
%                 \end{split}\]           
%
%Conversely, let $S_{4,4}\subset\PP(1^3,2^2)$ be a smooth complete intersection of bidegree $(4,4)$ with $x_0,x_1,x_2$ only occuring in even degrees in the defining equations. Then $S_{4,4}$ is a triple $K3$ burger with $K^2=4$.
%\end{proposition}
%
%\begin{proof} The assumptions imply that $S$ is a bidegree $(4,4)$ complete intersection in $\PP(1^3,2^2)$ \cite[Theorem 2.1]{Reid}. Reasoning as in the proof of proposition \ref{2}, one finds the only possibilities for the $\sigma_j$ are as indicated.
%\end{proof}
%
%
%\begin{remark} An example of a surface as in proposition \ref{4} is given by the Fermat--type complete intersection
%  \[  x_0^4+x_1^4+x_2^4+x_3^2+x_4^2 = x_0^4 +2x_1^4+3x_2^4+4x_3^2+5x_4^2=0\ .\]
%  \end{remark}
%

 \subsection{Families}
 
 This section establishes some notation. The two cases in notation \ref{fam} correspond to two cases of propositions \ref{2} and \ref{3}. 
 \begin{notation}\label{fam} Let
   \[ \Ss\ \to\ B \]
   denote one of the following families:
   
  \noindent
  (\rom1) (Case (\rom1) of proposition \ref{2}) The family  
    of all smooth hypersurfaces in $\PP:=\PP(1^3,4)$ of type
   \[ f_b(x_0,x_1,x_2,x_3)=0\ ,\]
   where $f_b$ is weighted homogeneous of degree $8$, and $x_0, x_1, x_2$ occur only in even degree. Let $S_b$ denote the fibre of $\Ss$ over $b\in B$.
   
%   Let $\sigma_j\in\aut(\PP)$ ($j=0,1,2$) be the involutions defined as
%   \[  \begin{split}  \sigma_0 [x_0:x_1:x_2:x_3] =[-x_0:x_1:x_2:x_3]\ ,\\
%                         \sigma_1  [x_0:x_1:x_2:x_3] =[x_0:-x_1:x_2:x_3]\ ,\\
%                         \sigma_2 [x_0:x_1:x_2:x_3] =[x_0:x_1:-x_2:x_3]\ .\\                 
%                                 \end{split}\]
%      Let $\sigma_j^{\Ss}$ and $\sigma_{jb}$, $j=0,1,2$, denote the induced involutions on $\Ss$ resp. on a fibre $S_b$.     
%      
 
% \noindent
% (\rom2) (Case (\rom2) of proposition \ref{2}) The family  
%    of all smooth hypersurfaces in $\PP:=\PP(1^3,4)$ of type
%   \[ f_b(x_0,x_1,x_2,x_3)=0\ ,\]
%   where $f_b$ is weighted homogeneous of degree $8$, and $x_0, x_1$ occur only in even degree. Let $S_b$ denote the fibre of $\Ss$ over $b\in B$.
%   
%   Let $\sigma_j\in\aut(\PP)$ ($j=0,1,2$) be the involutions defined as
%   \[  \begin{split}  \sigma_0 [x_0:x_1:x_2:x_3] =[-x_0:-x_1:x_2:x_3]\ ,\\
%                         \sigma_1  [x_0:x_1:x_2:x_3] =[-x_0:x_1:-x_2:x_3]\ ,\\
%                         \sigma_2 [x_0:x_1:x_2:x_3] =[-x_0:x_1:-x_2:x_3]\ .\\                 
%                                 \end{split}\]
%      Let $\sigma_j^{\Ss}$ and $\sigma_{jb}$, $j=0,1,2$, denote the induced involutions on $\Ss$ resp. on a fibre $S_b$.     
%      
% 

 \noindent
 (\rom2) (Case (\rom1) of proposition \ref{3}) The family of all smooth hypersurfaces in $\PP=\PP(1^3,2)$ of type
    \[ f_b(x_0,x_1,x_2,x_3)=0\ ,\]
   where $f_b$ is weighted homogeneous of degree $6$, and $x_0, x_1, x_2$ occur only in even degree. Let $S_b$ denote the fibre of $\Ss$ over $b\in B$.

       \end{notation}
    
%   \begin{lemma} Let $\Ss\to B$ be as in notation \ref{fam}. For any $b\in B$, the fibre $S_b$ is isomorphic to a special Horikawa surface.
%    \end{lemma}   
    
  \begin{remark} Let $\Ss\to B$ be the family as in notation \ref{fam}(\rom1) (resp. (\rom2)). Then any fibre $S_b$ is a triple $K3$ burger with $K^2=2$ (resp. $K^2=3$). This is immediate from proposition \ref{2} (resp. proposition \ref{3}).
  \end{remark}

 \begin{lemma}\label{smooth} Let $\Ss\to B$ be one of the two families of notation \ref{fam}. The variety $\Ss$ is a smooth quasi--projective variety.
 \end{lemma}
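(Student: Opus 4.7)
The plan is to realize $\Ss$ as the universal hypersurface inside the smooth locus of $\PP\times B$, and then to combine a trivial argument for quasi-projectivity with the standard Jacobian criterion for smoothness.

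For quasi-projectivity, I would note that $B$ is by construction an open subset of the affine space $\A^M$ whose coordinates are the coefficients of the allowed monomials (degree $8$ with $x_0,x_1,x_2$ in even degree in case (\rom1); degree $6$ in case (\rom2)), the openness being defined by the smoothness of $S_b$. The total space $\Ss\subset \PP\times B$ is closed, cut out by the universal defining equation, hence quasi-projective.

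For smoothness, the key preliminary observation is that $\Ss$ does not meet $\operatorname{Sing}(\PP)\times B$. In both cases $\operatorname{Sing}(\PP)$ is the single point $[0{:}0{:}0{:}1]$, which has $x_0=x_1=x_2=0$; if some $(x,b)\in \Ss$ had $x=[0{:}0{:}0{:}1]$, then $S_b$ would pass through a singular point of the ambient weighted projective space and hence fail to be smooth, contradicting $b\in B$. Thus $\Ss\subset \PP^{\mathrm{sm}}\times B$, where $\PP^{\mathrm{sm}}$ denotes the smooth locus.

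On $\PP^{\mathrm{sm}}\times B$ I would then apply the Jacobian criterion to the universal equation. Working on an affine open $U\subset \PP^{\mathrm{sm}}$ that trivializes $\OO_\PP(d)$ (with $d=8$ or $6$), the intersection $\Ss\cap(U\times B)$ is cut out by a single regular function $F_U$ that is linear in the $B$--coefficients. At any point $(x,b)\in \Ss$ pick an index $i\in\{0,1,2\}$ with $x_i\ne 0$ (such $i$ exists by the previous paragraph); then $m:=x_i^d$ is one of the monomials spanning the family (it has an even exponent of $x_i$ and zero exponents of the other weight--$1$ variables), and the partial derivative of $F_U$ with respect to the $B$--coordinate corresponding to $m$ equals a nonzero scalar multiple of $m(x)\ne 0$. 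Hence $dF_U\ne 0$ at $(x,b)$, so $\Ss$ is smooth there, which concludes the argument. The only nontrivial step is the verification that $\Ss$ misses $\operatorname{Sing}(\PP)\times B$; the rest is the standard universal hypersurface argument.
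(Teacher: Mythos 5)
Your argument is correct, but it takes a somewhat different route from the paper's. The paper fibers the universal family over $\PP$ rather than working in $\PP\times B$ with the Jacobian criterion: it first shows (Lemma \ref{bpf}) that the linear system spanned by the allowed monomials is base point free on all of $\PP$ (because it is pulled back from a complete, ample linear system under the natural $(\ZZ/2\ZZ)^3$--cover $\PP(1^3,4)\to\PP(2^3,4)$), so that the full incidence variety $\bar{\Ss}\subset\bar{B}\times\PP$ is a projective bundle over $\PP$; since smooth members avoid the singular point of $\PP$, the family $\Ss$ is a Zariski open inside the restriction of this bundle to the smooth locus, hence smooth (and quasi--projective). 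You instead apply the Jacobian criterion to the universal equation inside $\PP^{\mathrm{sm}}\times B$, differentiating with respect to the coefficient of $x_i^{d}$ at a point where $x_i\neq 0$. Both proofs ultimately rest on the same two facts --- smooth members miss $\operatorname{Sing}(\PP)$, and the monomials $x_i^{d}$, $i=0,1,2$, belong to the allowed family --- and yours is the more elementary and self--contained. What the paper's packaging buys is the projective--bundle structure of $\bar{\Ss}$ over $\PP$ and Lemma \ref{bpf} themselves, which are reused in the stratification argument for Proposition \ref{triv}; your version proves the present lemma but produces no such reusable ingredients.

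One step in your write--up deserves more care (the paper, admittedly, asserts the corresponding fact without proof): ``$S_b$ passes through a singular point of the ambient space, hence fails to be smooth'' is not a valid general principle --- a subvariety through a quotient singularity can very well be smooth. Here the claim is true for a concrete reason: if $[0{:}0{:}0{:}1]\in S_b$, then the coefficient of $x_3^2$ (resp. $x_3^3$) in $f_b$ vanishes, so in the orbifold chart $\C^3/\mu_4$ (resp. $\C^3/\mu_2$) at that point the preimage of $S_b$ is a hypersurface germ singular at the fixed point of the action, and the quotient of such a germ by a group acting freely away from the origin cannot be a smooth surface germ; hence $S_b$ would be singular, contradicting $b\in B$. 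Also, if you parametrize by an affine space of coefficients you are, strictly speaking, proving smoothness of a slightly different total space than the paper's (whose base sits inside a projective space of coefficients); the same coefficient--derivative computation works in affine charts of that projective space, so this is only a matter of bookkeeping, but it should be said.
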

 
 \begin{proof} Let us treat case (\rom1); the other case is similar.
 By construction, there are morphisms
   \[ \begin{array}[c]{ccc} 
      \Ss & \xrightarrow{\pi}& \PP\\
      \downarrow{\scriptstyle \nu} &&\\
      B&&
      \end{array}\]
   Let $\bar{\Ss}\to\bar{B}$ denote the universal family of all (not necessarily smooth) hypersurfaces in $\PP$ of type
    \[ f_b(x_0,x_1,x_2,x_3)=0\ ,\]
    where $f_b$ is weighted homogeneous of degree $8$  and $x_0, x_1,x_2$ only occur in even degrees. Then $\bar{B}$ is a projective space containing $B$ as a Zariski open. 
    
    \begin{lemma}\label{bpf} For any $x\in\PP(1^3,4)$, there exists $b\in\bar{B}$ such that $x\not\in S_b$.
    \end{lemma}
    
   \begin{proof} There is a $(\ZZ/2\ZZ)^3$ cover
     \[ \PP(1^3,4)\ \to\ \PP(2^3,4)\cong \PP(1^3,2)=:\PP^\prime\ .\]
     The surfaces in $\bar{\Ss}\to \bar{B}$ correspond to the complete linear system $\PP H^0(\PP^\prime,\OO_{\PP^\prime}(4))$ which is (ample hence) base point free.
    \end{proof}

    Lemma \ref{bpf} ensures that $\bar{\Ss}$ is a projective bundle over $\PP(1^3,4)$, in particular it is a projective quotient variety. Any surface $S_b$ with $b\in B$ avoids the singular point of $\PP(1^3,4)$, and so $\Ss$ is Zariski open inside a projective bundle over the non--singular locus of $\PP(1^3,4)$.
    It follows that $\Ss$ is smooth. 
       \end{proof}

\section{Trivial Chow groups}

  This intermediate section contains a result asserting the triviality of a certain Chow group. This result (proposition \ref{triv}) will be the most essential ingredient in the proof of our main result (theorem \ref{main} in the next section). The proof of proposition \ref{triv} occupies subsection \ref{ssyes}, and uses a stratification argument borrowed from \cite{tod}. 
  
 \begin{proposition}\label{triv} Let $\Ss\to B$ be a family of surfaces as in notation \ref{fam}. Let $B^0\subset B$ be a Zariski open, and let $\Ss^0\to B^0$
 be the family obtained by restriction. Then
   \[ A^2_{hom}(\Ss^0\times_{B^0} \Ss^0)=0\ .\]
   \end{proposition}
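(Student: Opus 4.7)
The plan is to exploit that $\Ss\to B$ is (an open subset of) a projective bundle over $\PP$: its relative self-product $\Ss\times_B\Ss$ fibers, via the two projections to $\PP$, over $\PP\times\PP$ with projective-space fibers of locally constant rank on suitable strata. A stratification argument \`a la \cite{tod}, combined with the projective bundle formula and the localization exact sequence, then yields the vanishing.

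First I pass to the full linear system: let $\bar{\Ss}\to\bar{B}$ denote the family of \emph{all} (not necessarily smooth) hypersurfaces in $\PP$ of the type in notation \ref{fam}, with $\bar{B}$ a projective space, as in the proof of lemma \ref{smooth}. Since $\Ss^0\times_{B^0}\Ss^0$ is Zariski open in $\bar{\Ss}\times_{\bar{B}}\bar{\Ss}$, by localization it suffices to show $A^2_{hom}(\bar{\Ss}\times_{\bar{B}}\bar{\Ss})=0$.

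Now consider the projection $\pi\colon\bar{\Ss}\times_{\bar{B}}\bar{\Ss}\to\PP\times\PP$, $(b,x,y)\mapsto(x,y)$. Its fiber over $(x,y)$ is the projectivization of the subspace of $\bar{B}$ cut out by the two evaluation conditions $f(x)=f(y)=0$. The crucial point is that, since $x_0,x_1,x_2$ appear only in even degrees, every defining polynomial is invariant under the group $G=\langle s_0,s_1,s_2\rangle\cong(\ZZ/2)^3$; consequently the evaluation functionals at $x$ and $y$ on $\bar{B}$ are linearly proportional (equal up to sign) whenever $y\in G\cdot x$, and linearly independent otherwise. I stratify $\PP\times\PP$ into the open stratum $U:=(\PP\times\PP)\setminus\bigcup_{g\in G}\Delta_g$, where $\Delta_g:=\{(x,g\cdot x):x\in\PP\}\cong\PP$, together with the finitely many locally closed strata obtained from the $\Delta_g$ and their mutual intersections (which correspond to loci of $\PP$ fixed by subgroups of $G$, hence are again weighted projective subspaces). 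Over every such stratum $\pi$ restricts to a Zariski-locally trivial projective bundle of constant rank.

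Each stratum is a locally closed subset of a product of weighted projective spaces, and such varieties have trivial Chow groups with rational coefficients (since $A^*(\PP)=\QQ$ in each degree, a property that passes to products and to locally closed subsets). By the projective bundle formula, the preimage of each stratum in $\bar{\Ss}\times_{\bar{B}}\bar{\Ss}$ inherits trivial Chow groups. An inductive application of the localization sequence $A_*(Z)\to A_*(X)\to A_*(X\setminus Z)\to 0$ through the stratification then gives $A^2_{hom}(\bar{\Ss}\times_{\bar{B}}\bar{\Ss})=0$, and hence also $A^2_{hom}(\Ss^0\times_{B^0}\Ss^0)=0$. The main obstacle is verifying that $\pi$ genuinely has locally constant fiber rank on each stratum (that no further refinement is needed); this reduces to the observation that $G$-invariance of $\bar{B}$ forces the evaluation condition at $y\in G\cdot x$ to agree, up to sign, with that at $x$, together with the remark that smooth fibers of the family automatically avoid the singular point of $\PP$.
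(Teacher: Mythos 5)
Your geometric setup is the same as the paper's (pass to the full linear system over $\bar{B}$, fiber the relative square over $\PP\times\PP$, stratify by the graphs of the elements of $G=(\ZZ/2\ZZ)^3$, and use that the fibres are projective spaces of constant dimension on each stratum), but the homological input that is supposed to produce the vanishing is missing, and this is the heart of the matter. The localization sequence for ordinary Chow groups, $A_*(Z)\to A_*(X)\to A_*(X\setminus Z)\to 0$, is only right exact: running it inductively through a stratification whose strata have small Chow groups shows at best that $A_*(\bar{\Ss}\times_{\bar{B}}\bar{\Ss})$ is a finite--dimensional $\QQ$--vector space spanned by explicit classes; it gives no control of the kernel of the cycle class map, i.e.\ no statement of the form $A^2_{hom}=0$. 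This is precisely why the paper works with Totaro's weak/strong property: the extra surjectivity $A_i(-,1)\twoheadrightarrow \gr^W_{-2i}H_{2i+1}(-)$ for higher Chow groups (lemmas \ref{local0}, \ref{local}, corollary \ref{linear}, lemma \ref{projbundle}) is what lets one propagate the isomorphism $A_i\cong W_{-2i}H_{2i}$ through the strata and conclude injectivity of the cycle map, hence $A^2_{hom}(N)=0$. A substitute such as the Bloch--Srinivas/Jannsen principle (finite--dimensional Chow groups imply injective cycle maps) is only available for smooth projective varieties and does not apply to the singular projective total space $\bar{\Ss}\times_{\bar{B}}\bar{\Ss}$, nor to the quasi--projective opens you ultimately care about; so as written your induction does not close.

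Two further points. Your justification ``trivial Chow groups pass to locally closed subsets'' is false as a general principle (a K3 surface is a closed subvariety of a projective space); what is actually used in the paper is that the relevant strata --- complements of finitely many linearly embedded subvarieties --- satisfy the \emph{strong} property (corollary \ref{linear}), which is a statement about higher Chow groups and weights, not a formal consequence of $A^*(\PP)=\QQ$. Finally, the assertion that the two evaluation conditions at $x$ and $y$ are independent whenever $y\notin G\cdot x$ is not an ``observation'': it is exactly lemma \ref{2cond}, and the paper proves it by descending to the quotient $\PP(2^3,4)\cong\PP(1^3,2)$ and invoking very ampleness of the relevant line bundle via Delorme's criterion (lemma \ref{delorme}, proposition \ref{del}); some such point--separation argument must be supplied for your open stratum to be a projective bundle of the expected codimension~$2$.
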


 \subsection{Weak and strong property}
\label{totsection}

  \begin{definition}[Totaro \cite{T}] For any (not necessarily smooth) quasi--projective variety $X$, let $A_i(X,j)$ denote Bloch's higher Chow groups with rational coefficients (these groups are sometimes written $A^{n-i}(X,j)_{\QQ}$ or $CH^{n-i}(X,j)_{\QQ}$, where $n=\dim X$). As explained in \cite[Section 4]{T}, the relation with algebraic $K$--theory ensures there are functorial cycle class maps
    \[  A_i(X,j)_{}\ \to\ \gr^W_{-2i} H_{2i+j}(X)\ ,\]
    compatible with long exact sequences (here $W_\ast$ denotes Deligne's weight filtration on Borel--Moore homology \cite{PS}).
    
    We say that $X$ has the {\em weak property\/} if the cycle class maps induce isomorphisms
    \[   A_i(X)_{}\ \xrightarrow{\cong}\ W_{-2i} H_{2i}(X)\]
    for all $i$.
    
    We say that $X$ has the {\em strong property\/} if $X$ has the weak property, and, in addition, the cycle class maps induce surjections
      \[  A_i(X,1)_{}\ \twoheadrightarrow\ \gr^W_{-2i} H_{2i+1}(X) \]
      for all $i$.
   \end{definition}       
  
 \begin{lemma}[\cite{T}]\label{local0} Let $X$ be a quasi--projective variety, and $Y\subset X$ a closed subvariety with complement $U=X\setminus Y$. If $X$ has the strong property and $Y$ has the weak property, then $U$ has the strong property. 
 \end{lemma}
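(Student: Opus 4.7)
The plan is to compare the Bloch localization long exact sequence for higher Chow groups with the Borel--Moore homology localization long exact sequence via the cycle class maps, and then run refined five--lemma arguments. Concretely, one forms the commutative ladder whose upper row is
\[ \cdots \to A_i(Y, j+1) \to A_i(X, j+1) \to A_i(U, j+1) \to A_i(Y, j) \to A_i(X, j) \to A_i(U, j) \to \cdots \]
and whose lower row is the sequence obtained by applying $\gr^W_{-2i}$ to the Borel--Moore homology localization long exact sequence
\[ \cdots \to H_{2i+j+1}(Y) \to H_{2i+j+1}(X) \to H_{2i+j+1}(U) \to H_{2i+j}(Y) \to H_{2i+j}(X) \to H_{2i+j}(U) \to \cdots . \]
Since the latter is an exact sequence of mixed Hodge structures and MHS morphisms are strict with respect to $W$, taking $\gr^W_{-2i}$ preserves exactness; the squares of the ladder commute by the functoriality of the cycle class map recalled in \cite[Section 4]{T}. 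I would also invoke Deligne's weight bound that the weights on Borel--Moore homology $H_k(Z)$ are $\geq -k$, whence $\gr^W_{-2i} H_{2i-1}(Z) = 0$ for any variety $Z$; in particular $W_{-2i} H_{2i}(U) = \gr^W_{-2i} H_{2i}(U)$, so the weak property amounts to asking for a bijection onto the $\gr^W_{-2i}$--piece.

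For the weak property of $U$, I would apply the refined five--lemma to the segment
\[ A_i(Y,0) \to A_i(X,0) \to A_i(U,0) \to 0 \to 0 \]
(upper row, using $A_i(Y,-1) = A_i(X,-1) = 0$) and its lower counterpart, which by the weight vanishing above terminates identically as
\[ \gr^W_{-2i} H_{2i}(Y) \to \gr^W_{-2i} H_{2i}(X) \to \gr^W_{-2i} H_{2i}(U) \to 0 \to 0 . \]
The verticals at $A_i(X,0)$ and $A_i(Y,0)$ are bijective (weak property of $X$, included in the strong hypothesis, and weak property of $Y$), so the five--lemma forces the vertical $A_i(U,0) \to \gr^W_{-2i} H_{2i}(U)$ to be a bijection as well.

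For the strong property of $U$, the additional statement needed is surjectivity of $A_i(U,1) \to \gr^W_{-2i} H_{2i+1}(U)$. I would apply the refined five--lemma to the segment
\[ A_i(Y,1) \to A_i(X,1) \to A_i(U,1) \to A_i(Y,0) \to A_i(X,0) \]
and its lower $\gr^W_{-2i}$ counterpart, with the middle vertical being precisely $A_i(U,1) \to \gr^W_{-2i} H_{2i+1}(U)$. Surjectivity of this middle vertical requires, by the four--lemma for surjections: surjectivity at $A_i(X,1)$ (strong property of $X$), surjectivity at $A_i(Y,0)$ (weak property of $Y$), and injectivity at $A_i(X,0)$ (weak property of $X$). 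All three are supplied by hypothesis, so the argument closes.

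The only genuinely substantive ingredient is the fact that the Borel--Moore localization sequence is a long exact sequence of mixed Hodge structures, so that applying $\gr^W_{-2i}$ preserves exactness; this is the technical backbone of Totaro's framework, resting on Deligne's construction of the weight filtration as documented in \cite{PS}. Once this is granted, the rest is purely a mechanical diagram chase, and the asymmetric hypotheses (strong property for $X$, merely weak for $Y$) are precisely explained by which vertical maps need to be surjective versus bijective in the two chases above.
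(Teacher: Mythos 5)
Your proof is correct, and it is essentially the argument behind the paper's proof, which simply cites \cite[Lemma 6]{T}: the ladder comparing Bloch's localization sequence with the weight-graded Borel--Moore localization sequence (exact by strictness of MHS morphisms, with $\gr^W_{-2i}H_{2i-1}=0$ by the weight bound), followed by the four/five-lemma chases. This is the same style of diagram chase the paper itself writes out for the companion lemma \ref{local}, so there is nothing genuinely different in route.
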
 
 
 \begin{proof} This is \cite[Lemma 6]{T}.
 \end{proof}

\begin{lemma}\label{local} Let $X$ be a quasi--projective variety, and $Y\subset X$ a closed subvariety with complement $U=X\setminus Y$. If $Y$ and $U$ have the strong  property, then so does $X$.
\end{lemma}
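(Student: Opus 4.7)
The plan is to mimic the proof of the preceding lemma (Lemma \ref{local0}): compare the localization long exact sequence in Bloch's higher Chow groups with the (Deligne weight filtered) localization long exact sequence in Borel--Moore homology, and apply the five lemma in two different positions. As recorded in \cite[Section 4]{T}, the cycle class maps are natural with respect to both sequences, so the diagrams really do commute; applying the exact functor $\gr^W_{-2i}$ on the category of mixed Hodge structures converts the Borel--Moore long exact sequence into an exact sequence of vector spaces.

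For the weak property of $X$, consider the portion
\[ A_i(U,1)\ \to\ A_i(Y)\ \to\ A_i(X)\ \to\ A_i(U)\ \to\ 0 \]
of the higher Chow localization, and the corresponding portion
\[ \gr^W_{-2i} H_{2i+1}(U)\ \to\ W_{-2i} H_{2i}(Y)\ \to\ W_{-2i} H_{2i}(X)\ \to\ W_{-2i} H_{2i}(U)\ \to\ 0 \]
produced by applying $\gr^W_{-2i}$ to Borel--Moore localization (and using that on $H_{2i}$ the weights cannot drop below $-2i$, so $W_{-2i}=\gr^W_{-2i}$ in this range). The hypotheses give: $A_i(Y)\to W_{-2i}H_{2i}(Y)$ and $A_i(U)\to W_{-2i}H_{2i}(U)$ are isomorphisms (weak property for $Y$ and $U$), while $A_i(U,1)\to \gr^W_{-2i}H_{2i+1}(U)$ is surjective (strong property for $U$). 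The five lemma then forces the middle vertical $A_i(X)\to W_{-2i}H_{2i}(X)$ to be an isomorphism.

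For the strong property of $X$, shift the same comparison one notch to the left and look at
\[ A_i(U,2)\ \to\ A_i(Y,1)\ \to\ A_i(X,1)\ \to\ A_i(U,1)\ \to\ A_i(Y) \]
mapping to the corresponding row of $\gr^W_{-2i}H_{2i+\bullet}$. Now the strong property on $Y$ and $U$ gives surjectivity on the second and fourth vertical arrows, while the weak property on $Y$ (established hypothesis or rather given as part of ``strong'') gives injectivity on the fifth vertical arrow. The four lemma then delivers surjectivity of the middle vertical $A_i(X,1)\to \gr^W_{-2i}H_{2i+1}(X)$, which together with the preceding paragraph establishes the strong property for $X$.

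The main technical subtlety is bookkeeping around the weight filtration --- checking that the connecting homomorphisms on the higher Chow side and the Borel--Moore side are intertwined by the cycle class map, and that the $W_{-2i}=\gr^W_{-2i}$ identification on $H_{2i}^{BM}$ really holds. Both points are however part of the package set up in \cite[Section 4]{T}; once they are granted the argument reduces to a routine diagram chase, and I do not anticipate any further obstacle.
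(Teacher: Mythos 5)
Your argument is correct and is essentially the paper's own proof: the same comparison of the higher Chow localization sequence with the $\gr^W_{-2i}$ of the Borel--Moore localization sequence, first yielding the isomorphism $A_i(X)\xrightarrow{\cong}W_{-2i}H_{2i}(X)$ and then, one notch to the left, the surjectivity of $A_i(X,1)\to\gr^W_{-2i}H_{2i+1}(X)$. The paper phrases the two steps as diagram chases rather than invoking the five/four lemmas by name, and your explicit remarks on exactness of $\gr^W$ and on $W_{-2i}H_{2i}=\gr^W_{-2i}H_{2i}$ are exactly the points the paper leaves implicit.
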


\begin{proof} This is the same argument as \cite[Lemma 7]{T}, which is a slightly different statement. As in loc. cit., using the localization property of higher Chow groups \cite{B3}, \cite{Lev}, one finds a commutative diagram with exact rows
  \[ \begin{array}[c]{cccccccc}
 % A^{\ast,\ast}(U,Z,1)&\to& A_\ast(Y)\otimes_{\ZZ} A^\ast(Z) &\to&      
                                                                               %          A_\ast(X)\otimes_{\ZZ} A^\ast(Z) &\to& A_\ast(U)\otimes_{\ZZ} A^\ast(Z) & \to 0\\
                                     %       \downarrow && \downarrow && \downarrow && \downarrow & \\
                                                       %                 A^\ast(U\times Z,1) &\to& A_\ast(Y\times Z) &\to& A_\ast(X\times Z) &\to& A_\ast(U\times Z)& \to 0 \\
                                                       A_{i}(U,1)_{}&\to& A_i(Y)_{}  &\to&      
                                                                                         A_i(X)_{} &\to& A_i(U)_{} & \to 0\\
                                            \downarrow && \downarrow && \downarrow && \downarrow & \\
                                                                       \gr^W_{-2i}H_{2i+1}(U) &\to& \gr^W_{-2i} H_{2i}(Y) &\to& \gr^W_{-2i} H_{2i}(X) &\to& \gr^W_{-2i} H_{2i}(U)& \to 0 \\
 \end{array} \]    
   A diagram chase reveals that under the assumptions of the lemma, the one but last vertical arrow is an isomorphism.
   
   Continuing these long exact sequences to the left, there is a commutative diagram with exact rows
    \[ \begin{array}[c]{cccccccc}

  % A^{\ast,\ast}(Y,Z,1)&\to&  A^{\ast,\ast}(X,Z,1)&\to&  A^{\ast,\ast}(U,Z,1)&\to& A_\ast(Y)\otimes_{\ZZ} A^\ast(Z) &\to\\  
     %  \downarrow && \downarrow && \downarrow && \ \ \ \downarrow{\cong} & \\
   %   A^\ast(Y\times Z,1)&\to&      A^\ast(X\times Z,1)&\to&      A^\ast(U\times Z,1) &\to& A_\ast(Y\times Z) &\to\\

      A_{i}(Y,1)_{}&\to&  A_{i}(X,1)_{}&\to&  A_{i}(U,1)_{}&\to& A_i(Y)_{} &\to\\  
       \downarrow && \downarrow && \downarrow && \ \ \ \downarrow{\cong} & \\
      \gr^W_{-2i} H_{2i+1}(Y)&\to&      \gr^W_{-2i} H_{2i+1}(X)&\to&      \gr^W_{-2i} H_{2i+1}(U) &\to& \gr^W_{-2i} H_{2i}(Y) &\to\\
       \end{array}\]
      
     Chasing some more inside this diagram, one finds that the second vertical arrow is a surjection.       
          \end{proof}

  \begin{corollary}\label{linear} Let $X$ be a quasi--projective variety that admits a stratification such that each stratum is of the form $\A^k\setminus L$, where $L$ is a finite union of linearly embedded affine subspaces. Then $X$ has the strong property.
  \end{corollary}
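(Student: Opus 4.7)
The plan is to induct on $d := \dim X$ and use the two gluing lemmas. The base case $d=0$ is immediate: each stratum is a point or empty, a point has the strong property by direct inspection, and finitely many points assemble via Lemma \ref{local}.

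For the inductive step I first record two building blocks. \emph{(a)} The affine space $\A^k$ has the strong property: its Chow groups vanish except for $A_k(\A^k)\cong\QQ$, generated by the fundamental class, and $H_j^{BM}(\A^k)$ vanishes except for $H_{2k}^{BM}(\A^k)\cong\QQ$ in pure weight $-2k$, so the weak cycle map is an isomorphism and the strong surjection lands in zero. \emph{(b)} Any finite union $L=\bigcup_i L_i$ of linearly embedded affine subspaces is itself a quasi-projective variety admitting a linear-type stratification, indexed by the nonempty subsets $S$ of the set of components, whose strata are
\[ T_S \;=\; \Bigl(\bigcap_{i\in S} L_i\Bigr)\setminus \bigcup_{j\notin S} L_j. \]
Each $T_S$ is itself of the form $\A^{k'}\setminus L'$, where $L'$ is again a finite union of linearly embedded affine subspaces of $\A^{k'}$. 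Consequently, if $\dim L<d$ then the inductive hypothesis of the corollary, applied to $L$ (which has strictly smaller dimension), gives that $L$ has the strong property, and in particular the weak property.

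Now let $X^{(<d)}\subset X$ be the closed union of all strata of dimension $<d$; it carries the induced linear-type stratification, all of whose strata have dimension $<d$, and so by induction it has the strong property. The open complement $X\setminus X^{(<d)}$ is the disjoint union of the top-dimensional strata, each of the form $\A^d\setminus L$ with $\dim L<d$. By (a) and (b), Lemma \ref{local0} applied to the triple $(\A^d,L,\A^d\setminus L)$ shows that each such top stratum has the strong property; iterating Lemma \ref{local} across the finitely many disjoint top strata yields the strong property for $X\setminus X^{(<d)}$. A final application of Lemma \ref{local} to the decomposition $X = X^{(<d)} \sqcup (X\setminus X^{(<d)})$ concludes that $X$ has the strong property.

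The only genuine content is the short computation (a) and the verification in (b) that the natural subset-indexed stratification of a union of linear subspaces consists of locally closed pieces of the required ``affine minus linear'' form; everything else is bookkeeping. The main pitfall is the apparent circularity between the corollary and fact (b), which is resolved precisely because (b) is ever invoked only for $L$ of dimension strictly less than $d$, where the outer induction hypothesis is available.
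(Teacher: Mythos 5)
Your argument is correct and follows essentially the same route as the paper: the strong property for affine space, the weak property for the linear configuration $L$, Lemma \ref{local0} applied to $\A^k\setminus L$, and Lemma \ref{local} to glue the strata. The only difference is that you prove the weak property of $L$ by a nested induction on dimension (using that $L$ itself carries a stratification of the same ``affine minus linear'' type), whereas the paper simply asserts this standard fact about unions of linearly embedded subspaces; your extra verification is sound and the apparent circularity is indeed avoided since $L$ has strictly smaller dimension.
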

  
  \begin{proof} Affine space has the strong property (this is the homotopy invariance for higher Chow groups). The subvariety $L$ has the weak property. Doing a diagram chase as in lemma \ref{local} (or directly applying \cite[Lemma 6]{T}), it follows that the variety
  $\A^k\setminus L$  has the strong property. The corollary now follows from lemma \ref{local}.
  \end{proof}
  
 \begin{lemma}\label{projbundle} Let $X$ be a quasi--projective variety with the strong property. Let $Y\to X$ be a projective bundle. Then $Y$ has the strong property.
 \end{lemma}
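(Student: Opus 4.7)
The plan is to apply the projective bundle formula on both sides of the cycle class map. Let $\pi\colon Y\to X$ be a projective bundle of relative dimension $r$, with tautological class $\xi\in A^1(Y)$ arising from the relative $\mathcal{O}(1)$. On the algebraic side, Bloch's projective bundle formula for higher Chow groups provides, for each $i$ and each $j\ge 0$, an isomorphism
\[ \bigoplus_{k=0}^r A_{i-k}(X,j)\ \xrightarrow{\cong}\ A_i(Y,j)\ ,\qquad (a_k)_k\mapsto \sum_{k=0}^r \xi^{k}\cdot\pi^*(a_k)\ . \]
In parallel, the Leray--Hirsch theorem yields a decomposition in Borel--Moore homology,
\[ H_{m}(Y)\ \cong\ \bigoplus_{k=0}^r H_{m-2k}(X)(k)\ ,\]
induced by the same algebraic correspondences $\xi^k\cdot\pi^*$. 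Taking graded pieces of the weight filtration and reindexing gives
\[ \gr^W_{-2i}\,H_{2i+j}(Y)\ \cong\ \bigoplus_{k=0}^r \gr^W_{-2(i-k)}\,H_{2(i-k)+j}(X)\ . \]

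With these parallel decompositions in hand, the lemma will follow formally. The cycle class maps of \cite{T} are natural both for flat pullback along $\pi$ and for intersection with algebraic classes such as $\xi$. Hence, under the two decompositions above, the cycle class map
\[ A_i(Y,j)\ \longrightarrow\ \gr^W_{-2i}\,H_{2i+j}(Y)\]
is identified with the direct sum, over $k=0,\ldots,r$, of the cycle class maps $A_{i-k}(X,j)\to \gr^W_{-2(i-k)}\,H_{2(i-k)+j}(X)$. Applying this with $j=0$ and $j=1$, the hypothesis that $X$ has the strong property at once yields the analogous statements for $Y$, i.e.\ $Y$ has the strong property.

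The one non-formal point to verify is that the Leray--Hirsch decomposition is strict for the weight filtration; but this is automatic, since the isomorphism is induced by a morphism of mixed Hodge structures (being polynomial in the algebraic classes $\xi$ and $\pi^*$), and morphisms of mixed Hodge structures are strict with respect to $W_\bullet$. Aside from invoking the projective bundle formula in Bloch's higher Chow theory, this is the only input needed; so I expect no further obstacle. (Alternatively, when the bundle is Zariski-locally trivial one could argue by Noetherian induction on $X$, stratifying $Y$ into pieces of the form $X'\times\mathbb A^k$ and iterating lemma \ref{local}, but the direct projective bundle route seems cleaner.)
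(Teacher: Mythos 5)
Your proof is essentially the paper's own argument: the paper disposes of the lemma in one line by citing Bloch's projective bundle formula for higher Chow groups, and your write-up merely makes explicit the compatible decomposition of weight-graded Borel--Moore homology and the naturality of the cycle class maps under $\pi^\ast$ and intersection with $\xi$. (Only a cosmetic remark: with $a_k\in A_{i-k}(X,j)$ the correct twist is $\xi^{r-k}\cdot\pi^\ast(a_k)$ rather than $\xi^{k}\cdot\pi^\ast(a_k)$, but this reindexing changes nothing.)
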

 
 \begin{proof} This follows from the projective bundle formula for higher Chow groups \cite{B2}.
  \end{proof}

 \subsection{Proof of proposition \ref{triv}}
 \label{ssyes}
 
 \begin{proof}
 
 \noindent
 (\rom1) ($K^2=2$)
 Let us use the shorthand
   \[ \begin{split}  \PP&:=\PP(1^3,4)\ ,\\
                           M&:= \PP\times\PP\ ,\\
                           N&:= \Bigl\{  (f_b,p,p^\prime)\ \in \bar{B}\times \PP\times\PP\ \vert\ f_b(p)=f_b(p^\prime)=0\ \Bigr\}\ \ \ \subset\ \bar{B}\times M\ .
                           \\ \end{split} \]
     The goal is to prove that
     \begin{equation}\label{goal} A^2_{hom}(N)\stackrel{??}{=}0\ .\end{equation}
    
    This implies proposition \ref{triv} for case (\rom1), because (\ref{goal}) implies triviality of $A^2$ of any open in $N$, and $\Ss\times_B \Ss$ is an open in $N$.

 Inside $M$, we have various ``partial diagonals''
  \[      \begin{split} \Delta_M=\Delta_{+++}&:= \Bigl\{ (p,p^\prime)\in\PP\times\PP\ \vert\  p=p^\prime\Bigr\}\ ,\\
                                               \Delta_{+-+}&:= \Bigl\{ (p,p^\prime)\in\PP\times\PP\ \vert\  [p_0:p_1:p_2:p_3] = [p^\prime_0:-p^\prime_1:p^\prime_2:p^\prime_3] \Bigr\}\ ,\\   
  \Delta_{-++}&:= \Bigl\{ (p,p^\prime)\in\PP\times\PP\ \vert\  [p_0:p_1:p_2:p_3] = [-p^\prime_0:p^\prime_1:p^\prime_2:p^\prime_3] \Bigr\}\ ,\\  
    \Delta_{++-}&:= \Bigl\{ (p,p^\prime)\in\PP\times\PP\ \vert\  [p_0:p_1:p_2:p_3] = [p^\prime_0:p^\prime_1:-p^\prime_2:p^\prime_3] \Bigr\}\ ,\\
    \Delta_{+--}&:=  \Bigl\{ (p,p^\prime)\in\PP\times\PP\ \vert\  [p_0:p_1:p_2:p_3] = [p^\prime_0:-p^\prime_1:-p^\prime_2:p^\prime_3] \Bigr\}\ ,\\
       \Delta_{-+-}&:= \Bigl\{ (p,p^\prime)\in\PP\times\PP\ \vert\  [p_0:p_1:p_2:p_3] = [-p^\prime_0:p^\prime_1:-p^\prime_2:p^\prime_3] \Bigr\}\ ,\\
      \Delta_{--+}&:= \Bigl\{ (p,p^\prime)\in\PP\times\PP\ \vert\  [p_0:p_1:p_2:p_3] = [-p^\prime_0:-p^\prime_1:p^\prime_2:p^\prime_3] \Bigr\}\ ,\\
   \Delta_{---}&:= \Bigl\{ (p,p^\prime)\in\PP\times\PP\ \vert\  [p_0:p_1:p_2:p_3] = [p^\prime_0:p^\prime_1:p^\prime_2:-p^\prime_3] \Bigr\}\ ,\\
   \end{split} \]
    
    (Here, we write $p=[p_0:p_1:p_2:p_3]$ and $p^\prime=[p^\prime_0:p^\prime_1:p^\prime_2:p^\prime_3]$. We observe that the various $\Delta_{\pm\mp\pm}$ are just the graphs of the elements of the group $(\ZZ/2\ZZ)^3=<\sigma_0,\sigma_1,\sigma_2>\subset\aut(\PP)$.) 
    
    Let us define the Zariski opens
    \[ \begin{split}   M^0&:= M\setminus (\cup \Delta_{\pm\mp\pm})\ ,\\
                     N^0&:=  N\setminus \pi^{-1} (\cup \Delta_{\pm\mp\pm})\ .\\
                     \end{split}    \]
   
   Corollary \ref{linear} implies that the union  $\cup \Delta_{\pm\mp\pm}$ has the strong property. Since $M=\PP\times\PP$ has the strong property, so does $M^0$ (lemma \ref{local0}).
   The morphism from $N^0$ to $M^0$ has constant dimension (lemma \ref{2cond}), so it is a projective bundle and $N^0$ also has the strong property (lemma \ref{projbundle}).
   
   \begin{lemma}\label{2cond} Let 
     \[ (p,p^\prime)\ \ \ \in \ M\setminus (\cup \Delta_{\pm\mp\pm})\ .\]
     Then $(p,p^\prime)$ imposes $2$ independent conditions on $\bar{B}$, i.e. there exists $b\in\bar{B}$ such that $S_b$ contains $p$ but not $p^\prime$.
     \end{lemma}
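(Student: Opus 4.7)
The plan is to reduce the claim to the standard fact that the linear system $|\OO_{\PP^\prime}(4)|$ on the quotient weighted projective space $\PP^\prime := \PP(1^3,2)$ separates points. The base $\bar B$ parametrizes weighted homogeneous polynomials of degree $8$ on $\PP = \PP(1^3,4)$ in which $x_0, x_1, x_2$ appear only to even powers, and this even-degree condition is exactly $G$-invariance, where $G = \langle s_0, s_1, s_2\rangle \cong (\ZZ/2\ZZ)^3$ acts on $\PP$ by sign changes on the first three coordinates. Hence $\bar B = \PP V$ with $V := H^0(\PP,\OO(8))^G$, and the lemma asks for surjectivity of the evaluation map
\[
\mathrm{ev}\colon V \to \C^2,\qquad f \mapsto (f(p),f(p^\prime)).
\]
Because every $f \in V$ is $G$-invariant, $\mathrm{ev}$ depends only on the $G$-orbits of $p$ and $p^\prime$; and since $\cup \Delta_{\pm\mp\pm}$ is precisely the union of the graphs of the elements of $G$, the hypothesis $(p,p^\prime) \notin \cup\Delta_{\pm\mp\pm}$ translates into $q(p)\ne q(p^\prime)$, where $q\colon \PP \to \PP/G$ denotes the quotient.

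I would then identify the quotient explicitly. The invariant subring $\C[x_0,x_1,x_2,x_3]^G$ is generated by $y_i := x_i^2$ ($i=0,1,2$) together with $x_3$, with weights $(2,2,2,4)$; hence $\PP/G = \PP(2,2,2,4) \cong \PP(1,1,1,2) = \PP^\prime$, and pull-back yields an isomorphism $q^*\colon H^0(\PP^\prime,\OO_{\PP^\prime}(4)) \xrightarrow{\cong} V$. The evaluation map therefore becomes $\bar f \mapsto (\bar f(\bar p),\bar f(\bar p^\prime))$ with $\bar p := q(p) \ne q(p^\prime) =: \bar p^\prime$ in $\PP^\prime$, so the problem is reduced to showing that $|\OO_{\PP^\prime}(4)|$ separates any two distinct points of $\PP^\prime$.

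This last step follows because $\OO_{\PP^\prime}(2)$ is very ample on $\PP^\prime = \PP(1^3,2)$: the integer $2$ is divisible by all the weights, so $\OO(2)$ is locally free even at the singular point, and the seven degree-$2$ monomials $x_0^2, x_0x_1, \ldots, x_2^2, x_3$ give an embedding $\PP^\prime \hookrightarrow \PP^6$. Consequently $\OO(4) = \OO(2)^{\otimes 2}$ is very ample as well and hence separates points; equivalently, one can exhibit a separating section explicitly as the square of a hyperplane in $\PP^6$ passing through the image of $\bar p$ but not of $\bar p^\prime$. The only mild technical point is checking that $\OO_{\PP^\prime}(2)$ is a genuine line bundle despite the singularity of $\PP^\prime$, but this is standard for weighted projective spaces once the twist is divisible by all weights; I do not foresee any more serious obstacle.
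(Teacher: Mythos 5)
Your argument is correct and follows essentially the same route as the paper: pass to the quotient $\PP/G\cong\PP(2,2,2,4)\cong\PP(1,1,1,2)$, note that the hypothesis $(p,p^\prime)\notin\cup\Delta_{\pm\mp\pm}$ exactly says the two points have distinct images there, and conclude by point-separation of the degree-$4$ linear system, which is how the paper argues via lemma \ref{delorme}. The only (harmless) difference is the last step: the paper verifies very ampleness of $\OO_{\PP(1,1,1,2)}(4)$ by Delorme's numerical criterion, whereas you observe that $\OO_{\PP(1,1,1,2)}(2)$ is already very ample (the Veronese-cone embedding into $\PP^6$) and take its square.
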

     
     \begin{proof} Consider the map
       \[ r \times r \colon\ \ \   M=\PP\times\PP\ \to\ \PP^\prime\times\PP^\prime\ ,\]
       where $\PP^\prime$ is as before $\PP(2,2,2,4)$. The condition $(p,p^\prime)\not\in(\cup \Delta_{\pm\mp\pm})$ implies that $r(p)\not=r(p^\prime)$. Since $\PP^\prime$ is isomorphic to
       $\PP^{\prime\prime}:=\PP(1,1,1,2)$ (and sections of $\OO_{\PP^\prime}(8)$ correspond under this isomorphism to sections of ${\mathcal O}_{\PP^{\prime\prime}}(4)$), lemma \ref{delorme} below shows there exists $S_b$ separating the points $p$ and $p^\prime$.                                                                      
                                                         
    \begin{lemma}\label{delorme} Let $\PP^{\prime\prime}$ be the weighted projective space $\PP(1,1,1,2)$. Then the line bundle ${\mathcal O}_{\PP^{\prime\prime}}(4)$ is very ample.
    \end{lemma}
    
    \begin{proof} The coherent sheaf ${\mathcal O}_{\PP^{\prime\prime}}(4)$ is locally free, because $4$ is a multiple of the weights \cite{Dol}. To see that this line bundle is very ample, we use the following numerical criterion: 
 
 \begin{proposition}[Delorme \cite{Del}]\label{del} Let $P=\PP(q_0, q_1,\ldots,q_n)$ be a weighted projective space. Let $m$ be the least common multiple of the $q_j$. Suppose every monomial
 \[x_0^{b_0} x_1^{b_1}\cdots x_n^{b_n}\]
 of (weighted) degree $km$ ($k\in \NN^\ast$) is divisible by a monomial of (weighted) degree $m$. Then ${\mathcal O}_{P}(m)$ is very ample.
 \end{proposition}
 
(This is the case $E(x)=0$ of \cite[Proposition 2.3(\rom3)]{Del}.)

Using proposition \ref{del}, lemma \ref{delorme} is now easily established.
      \end{proof}   
      \end{proof}

 Let us now finish the proof of proposition \ref{triv} for case (\rom1). Any point 
   \[ (p,p^\prime)\ \ \in\ M^1:= (\cup \Delta_{\pm\mp\pm})\ \ \ \subset M \]
   imposes exactly one condition on $\bar{B}$; indeed $p$ imposes one condition (lemma \ref{bpf}), and since $r(p)=r(p^\prime)$ in $\PP^\prime=\PP(2,2,2,4)$, any $S_b$ containing $p$ also contains $p^\prime$. This means that $N^1$ has the structure of a projective bundle over $M^1$. We have seen above that $M^1$ has the strong property. It follows from lemma \ref{projbundle} that
    \[ N^1:=\pi^{-1}(M^1)\ \ \ \subset N \]
    has the strong property. Lemma \ref{local} now implies that $N$ has the strong property, and so equality (\ref{goal}) is proven. 
    
  \noindent
  (\rom2) ($K^2=3$). Similar to case (\rom1), except that $\PP$ is now $\PP(1^3,2)$ and the degree of the hypersurfaces is $6$. Instead of lemma \ref{delorme}, we now use that $\OO_{\PP^3}(3)$ is very ample.

%  \noindent
%  (\rom3) ($K^2=4$) Similar to case (\rom1), except that $\PP$ is now $\PP(1^3,2^2)$, and the surfaces $S_b$ are of bidegree $(4,4)$. The bookkeeping of the various partial diagonals and their intersections is now more intricate, but this can be done (cf. \cite[Section 4]{tod}, where the same is done for a family of weighted complete intersection surfaces of codimension $2$).
     \end{proof}

\section{Main}

\begin{theorem}\label{main} Let $S$ be a triple $K3$ burger, and let $X_j (j=0,1,2)$ be the associated $K3$ surfaces. Assume that either

\noindent
(\rom1) $K_S^2=2$, or

\noindent
(\rom2) $K_S^2=3$ and the canonical map is base point free.
%\noindent
%(\rom3) $K_S^2=4$ and the canonical map is base point free.
%Then there is an isomorphism of Chow motives
 % \[ (\Psi_0,\Psi_1,\Psi_2)\colon\ \ \ t(X_0)\oplus t(X_1)\oplus t(X_2)\ \xrightarrow{\cong}\ t(S)\ \ \ \hbox{in}\ \MM_{\rm rat}\ .\]
 
 Then there is an isomorphism
   \[ (\Psi_0)_\ast+(\Psi_1)_\ast+(\Psi_2)_\ast\colon\ \ \ A^2_{hom}(X_0)\oplus A^2_{hom}(X_1)\oplus A^2_{hom}(X_2)\ \xrightarrow{\cong}\ A^2_{hom}(S)\ .\] 
 \end{theorem}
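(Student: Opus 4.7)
The plan is to upgrade the cohomological splitting of Remark \ref{Htr} to a splitting of transcendental Chow motives $h_2^{tr}(S) \cong \bigoplus_{j=0}^{2} h_2^{tr}(X_j)$ in $\MM_{\rm rat}$. Since $q(S)=0$ and the $X_j$ are $K3$, Theorem \ref{t2} identifies $A^2(h_2^{tr}(Y)) = A^2_{AJ}(Y) = A^2_{hom}(Y)$ for $Y \in \{S,X_0,X_1,X_2\}$, so such a motivic splitting immediately yields the stated isomorphism on Chow groups.

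First I would introduce candidate morphisms. Bracketing with the transcendental projectors of Theorem \ref{t2}, the correspondences $\Psi_j \in A^2(X_j \times S)$ of Remark \ref{Htr} yield morphisms $\psi_j\colon h_2^{tr}(X_j) \to h_2^{tr}(S)$, and the normalized transposes $\bar\psi_j := \tfrac{1}{2}\,\pi_2^{tr,X_j} \circ {}^t\Psi_j \circ \pi_2^{tr,S}$ yield morphisms going the other way (the factor $\tfrac{1}{2}$ being forced by $\deg p_j = 2$). The content of Remark \ref{Htr} is precisely that, cohomologically, $\bar\psi_k \circ \psi_j = \delta_{jk}\,\ide$ and $\sum_j \psi_j \circ \bar\psi_j = \ide_{h_2^{tr}(S)}$. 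Consequently, to prove the theorem it suffices to verify the cycle-theoretic identities
\[\sum_{j=0}^{2} \psi_j \circ \bar\psi_j - \pi_2^{tr,S} = 0 \ \text{in}\ A^2(S \times S), \qquad \bar\psi_k \circ \psi_j - \delta_{jk}\,\pi_2^{tr,X_j} = 0 \ \text{in}\ A^2(X_j \times X_k);\]
both left-hand sides are, by construction, already homologically trivial.

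To promote these identities from cohomology to Chow groups, I would apply Voisin's method of spread to the family $\Ss \to B$ of Notation \ref{fam}. The essential feature --- guaranteed by condition (\rom3) of Definition \ref{def} together with Propositions \ref{2}--\ref{3} --- is that each $\sigma_j$ descends from an automorphism of the ambient weighted projective space, and hence extends over $B$ family-wise. Therefore the quotient $K3$ surfaces $\XX_j \to B$, the relative correspondences $\Psi_j^\Ss$, and the relative K\"unneth projector $\pi_2^\Ss$ of Lemma \ref{relK} all exist over $B$. The transcendental projector $\pi_2^{tr}$ does not a priori extend relatively (the N\'eron--Severi lattice may jump in the family), but one circumvents this by working with $\pi_2^\Ss$: by Lemma \ref{relK}, $\pi_2^\Ss|_b$ acts as the identity on $A^2_{hom}(S_b)$, exactly as $\pi_2^{tr,S_b}$ does, so the two can be interchanged in any correspondence composition whose action is tested on $A^2_{hom}$. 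In this way one builds a relative cycle $\Xi^\Ss \in A^2(\Ss \times_B \Ss)$ whose fiberwise action on $A^2_{hom}(S_b)$ agrees with that of the error cycle $\Xi := \sum_j \psi_j \circ \bar\psi_j - \pi_2^{tr,S}$.

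The final step is to arrange that, after shrinking to a Zariski open $B^0 \subset B$, the cycle $\Xi^\Ss$ becomes globally --- not merely fiberwise --- homologically trivial on $\Ss^0 \times_{B^0} \Ss^0$. A Leray-type analysis, using that $B^0$ is Zariski open in a projective space (hence of simple cohomological nature) and that the fiberwise cohomology is controlled by the $(\ZZ/2)^3$-eigenspace decomposition of Remark \ref{Htr}, allows one to modify $\Xi^\Ss$ by cycles pulled back from $B^0$ or supported on relative diagonals so that the modified cycle lies in $A^2_{hom}(\Ss^0 \times_{B^0} \Ss^0)$ while the fiberwise action on $A^2_{hom}(S_b)$ is unchanged. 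Proposition \ref{triv} then forces the modified cycle to vanish, and restriction to any fiber $b \in B^0$ yields the desired identity in $A^2(S_b \times S_b)$. A parallel spreading argument on $\XX_j \times_B \XX_k$ handles the off-diagonal identities. \textbf{The main technical obstacle} lies precisely in this last spreading step: constructing the relative cycle so that it is globally (and not merely fiberwise) homologically trivial on the open total space, which is what allows Proposition \ref{triv} to bite.
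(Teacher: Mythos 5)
Your strategy is essentially the paper's own: reduce to showing that correspondences built from the $\sigma_j$ (equivalently from the $\Psi_j$) act as expected on $A^2_{hom}$ of every fibre, then spread over the family of Notation \ref{fam}, correct by a Leray-type argument, and invoke Proposition \ref{triv}. Indeed, since $\Psi_j\circ{}^t\Psi_j=\Delta_S+\Gamma_{\sigma_j}+(\hbox{terms supported on divisor}\times\hbox{divisor})$, your identity $\sum_j\psi_j\circ\bar\psi_j=\ide$ on $A^2_{hom}(S)$ is exactly the paper's Claim that $A^2_{hom}(S)^{---}=A^2_{hom}(S)^{-++}=A^2_{hom}(S)^{+-+}=A^2_{hom}(S)^{++-}=0$ and $A^2_{hom}(S)^{+++}=0$. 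However, there are concrete gaps in your execution. First, you conclude only for fibres over the shrunken base $B^0$, i.e.\ for a \emph{general} member of the family, whereas the theorem concerns an arbitrary triple $K3$ burger; the paper needs genericity already to know that the correcting cycle $\gamma\vert_b$ remains supported on (divisor)$\times$(divisor), and then an extra Baire-category/specialization step (as in \cite[Lemma 3.1]{LFu}) to pass from general $b$ to all $b\in B$. Second, the corrections you allow (``cycles supported on relative diagonals'') are the wrong class: a cycle supported on a relative (partial) diagonal acts as $\pm\sigma^\ast$ on $A^2_{hom}(S_b)$, not trivially. The admissible modifications are cycles supported on $\DD\times_B\DD$ for a relative divisor $\DD\subset\Ss$ and restrictions of cycles from the ambient $\PP\times\PP$ (whose action factors through $A^\ast_{hom}(\PP)=0$); this is what makes the fibrewise action unchanged.

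Two further points. Your ``parallel spreading argument on $\XX_j\times_B\XX_k$'' for the off-diagonal identities has no support in the paper: Proposition \ref{triv} is proved only for $\Ss^0\times_{B^0}\Ss^0$, so you would need (and would have to prove) an analogous triviality statement for the families of quotient $K3$'s. In fact this detour is unnecessary: once the eigenspace vanishings are known, injectivity follows formally, since $(\Psi_j)_\ast(\Psi_j)^\ast=2\,\ide$ on the $\sigma_j$-invariant part while $(\Psi_k)^\ast$ kills $A^2_{hom}(S)$-classes that are anti-invariant under $\sigma_k$; this is how the paper argues. Finally, you do not address the case distinction of Propositions \ref{2} and \ref{3}: the family of Notation \ref{fam} is set up for structure (\rom1) (and serves for (\rom3), (\rom4), which are the same family with different burger structures), but in case (\rom2), where $\sigma_2=\sigma_0\circ\sigma_1$, the paper does not use the spread argument at all and instead kills $A^2_{hom}(S)^{+++}$ by observing that $S/G$ is a rational surface. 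Your plan silently assumes every burger structure in the theorem spreads over the family of Notation \ref{fam} with its involutions acting family-wise; for case (\rom2) this requires a separate justification (or the paper's rationality argument).
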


\begin{proof} First, a reduction step. Let us define eigenspaces
  \[ A^2(S)^{\pm\mp\pm}:=\bigl\{ a\in A^2(S)\ \vert\ (\sigma_{0})^\ast(a)=\pm a\ ,\ (\sigma_{1})^\ast(a)=\mp a\ ,\ (\sigma_{2})^\ast(a)=\pm a\bigr\}\ .\]
  
  We now make the following claim:
  
  \begin{claim}\label{claim} Let $S$ be as in theorem \ref{main}. Any eigenspace with an odd number of minus signs is trivial, i.e.
  \[ A^2(S)^{---}=A^2(S)^{-++}=A^2(S)^{+-+}=A^2(S)^{++-}=0\ .\]
  Moreover,
   \[ A^2_{hom}(S)^{+++}=0\ .\]
  \end{claim}
  
  Before proving the claim, let us verify that the claim suffices to prove the theorem: the claim implies there is a decomposition
  \begin{equation}\label{aneq} A^2_{hom}(S)= A^2_{hom}(S)^{+--}\oplus A^2_{hom}(S)^{-+-}\oplus A^2_{hom}(S)^{--+}\ .\end{equation}
  Also, since necessarily
    \[    (\Psi_0)_\ast A^2(S)\ \ \subset\ A^2(S)^{+\pm\pm}\ ,\]
    the claim implies that
    \[ (\Psi_0)_\ast  A^2_{hom}(S)\ \ \subset\ A^2(S)^{+--}\ .\]  
    What's more, since
    \[ (\Psi_0)_\ast (\Psi_0)^\ast=2 \ide\colon\ \ \ A^2(S)^{+\pm\pm}\ \to\ A^2(S)^{+\pm\pm}\ ,\]
    there is actually equality
    \[  (\Psi_0)_\ast (\Psi_0)^\ast A^2_{hom}(S)= A^2(S)^{+--}\ .\]      
        (And similarly, for reasons of symmetry, 
    \[ \begin{split}   &(\Psi_1)_\ast (\Psi_1)^\ast A^2_{hom}(S)=A^2(S)^{-+-}\ ,\\
                       & (\Psi_2)_\ast (\Psi_2)^\ast A^2_{hom}(S)= A^2(S)^{--+}\  .)\\
                       \end{split}\]
               
               Therefore, the decomposition (\ref{aneq}) is equivalent to the decomposition
         \[ A^2_{hom}(S) =   (\Psi_0)_\ast (\Psi_0)^\ast    A^2_{hom}(S)  \oplus   (\Psi_1)_\ast (\Psi_1)^\ast A^2_{hom}(S)  \oplus (\Psi_2)_\ast (\Psi_2)^\ast A^2_{hom}(S)    \ .\]
         This proves the surjectivity statement of the theorem  
        
        Again using the claim, one deduces that the composition
     \[     \begin{split}  A^2_{hom}(X_0)\oplus A^2_{hom}(X_1)\oplus A^2_{hom}(X_2)\ \xrightarrow{ (\Psi_0)_\ast+(\Psi_1)_\ast+(\Psi_2)_\ast }\ A^2_{hom}(S)&\\     
           \ \xrightarrow{((\Psi_0)^\ast,(\Psi_1)^\ast,(\Psi_2)^\ast)}\  A^2_{hom}(X_0)\oplus A^2_{hom}(X_1)&\oplus A^2_{hom}(X_2)\\
           \end{split} \]
           equals twice the identity. This proves the injectivity statement of the theorem.
           
 It remains to prove the claim. 
 First, let us treat case (\rom2) of propositions \ref{2} and \ref{3}. In this case,
 $\sigma_2=\sigma_0\circ\sigma_1$ (i.e., $G:=<\sigma_0,\sigma_1,\sigma_2>\cong(\ZZ/2\ZZ)^2$), and so the first part of the claim is trivially true. 
 The second part of the claim is also true for these cases: indeed, there is equality 
   \[ A^2_{hom}(S)^{+++}=A^2_{hom}(S/G)\ .\]
  But the surface $S/G$ is a degree $8$ hypersurface in $\PP(1,2,2,4)$ (resp. a degree $6$ hypersurface in $\PP(1,2^3)$), and so $S/G$ is a surface with quotient singularities and ample anticanonical bundle. Such surfaces are rational \cite[Theorem 2.3]{Zh}, and hence $A^2_{hom}(S/G)=0$.
 
 Next, let us consider the cases (\rom1), (\rom3) and (\rom4) of propositions \ref{2} and \ref{3}. In this case, the surfaces $S_b$ are elements of the families of notation \ref{fam}. 
The argument, in a nutshell, is now as follows: the correspondences $\Psi_j$ exist as relative correspondences for the whole family of triple $K3$ burgers. Using the trivial Chow groups result (proposition \ref{triv}), one can upgrade a vanishing in cohomology to a vanishing of Chow groups.

We now proceed to prove claim \ref{claim} for surfaces as in proposition \ref{2}(\rom1), (\rom3) and (\rom4). (The cases of proposition \ref{3}(\rom1), (\rom3) and (\rom4) are mostly the same, modulo some mutatis mutandis which we will indicate below).

\vskip0.3cm
\noindent
\underline{\it Cases (\rom1), (\rom3), (\rom4) of proposition \ref{2}:} 
Let 
  \[ \Ss\ \to\ B \]
  denote the universal family of surfaces as in notation \ref{fam}(\rom1). Let $\{\sigma_0,\sigma_1,\sigma_1\}$ be either $\{s_0,s_1,s_2\}$ or $\{s_{01},s_{02},s_0\}$, and let
    \[ \XX_j := \Ss/\sigma_j\ \ \ (j=0,1,2) \]
  denote the universal families of associated $K3$ surfaces as in notation \ref{fam}.
  For any $b\in B$, we will write $S_b$ for the fibre of $\Ss$ over $b$, and $X_{0b}$ (resp. $X_{1b}$ resp. $X_{2b}$) for the fibre of $\XX_{0}$ (resp. $\XX_1$ resp. $\XX_2$) over $b$. Likewise, we will write $\sigma_{0b}$, $\sigma_{1b}$, $\sigma_{2b}$ for the restriction of $\sigma_0$ (resp. $\sigma_1$ resp. $\sigma_2$) to $S_b$.
  For a relative correspondence $\Gamma\in A^\ast(\Ss\times_B \Ss)$, we will use the shorthand
  \[ \Gamma\vert_b:=\Gamma\vert_{S_b\times S_b}\ \ \in A^\ast(S_b\times S_b) \]
  for the restriction (i.e., the image of $\Gamma$ under the Gysin homomorphism induced by the inclusion $b\hookrightarrow B$).

 By definition (cf. remark \ref{Htr}), we know that there is a fibrewise isomorphism
   \begin{equation}\label{fibrewise} \begin{split}  H^2_{tr}(S_b)&\cong H^2_{tr}(S_b)^{+--}\oplus H^2_{tr}(S_b)^{-+-}\oplus H^2_{tr}(S_b)^{--+}\\
       &\cong H^2_{tr}(X_{0b})\oplus H^2_{tr}(X_{1b})\oplus H^2_{tr}(X_{2b})\ \ \ \forall b\in B\ .\\
       \end{split}\end{equation} 

That is, there are no eigenspaces with an odd number of minus signs:

 \begin{equation}\label{noodd}
     H^2_{tr}(S_b)^{---} = H^2_{tr}(S_b)^{-++}=H^2_{tr}(S_b)^{+-+}=H^2_{tr}(S_b)^{++-}=0\ \ \ \forall b\in B\ .
     \end{equation}
     
  Also, there is no eigenspace without minus signs:
  
  \begin{equation}\label{no0} H^2_{tr}(S_b)^{+++}=0\ \ \ \forall b\in B\ . \end{equation}   
  
  Let us define a relative correspondence
  \[ \Gamma^{---}:= {1\over 8} ( \Delta_{\Ss}-\Gamma_{\sigma_0}) \circ(\Delta_{\Ss}-\Gamma_{\sigma_1})\circ(\Delta_\Ss-\Gamma_{\sigma_2})\circ \pi_2^{\Ss}\ \ \ \in A^2(\Ss\times_B \Ss) \ .\]

%Let us define three relative correspondences
%  \[ \begin{split}  \Gamma^{+--}&:= {1\over 8} ( \Delta_{\Ss}+\Gamma_{\sigma_0}) \circ(\Delta_{\Ss}-\Gamma_{\sigma_1})\circ(\Delta_\Ss-\Gamma_{\sigma_2})\circ \pi_2^{\Ss}\ \ \ \in A^2(\Ss\times_B \Ss)\ ,\\
  %                      \Gamma^{+-+}&:= {1\over 8} ( \Delta_{\Ss}+\Gamma_{\sigma_1})\circ (\Delta_{\Ss}-\Gamma_{\sigma_0})\circ (\Delta_\Ss-\Gamma_{\sigma_2})\circ \pi_2^{\Ss}\ \ \ \in A^2(\Ss\times_B \Ss)\ ,\\
         %               \Gamma^{--+}&:= {1\over 8}  ( \Delta_{\Ss}+\Gamma_{\sigma_2})\circ (\Delta_{\Ss}-\Gamma_{\sigma_0})\circ (\Delta_\Ss-\Gamma_{\sigma_1})\circ \pi_2^{\Ss}\ \ \ \in A^2(\Ss\times_B \Ss)\ ,\\                       \end{split}\]
     %   where $\pi_2^\Ss$ is as in lemma \ref{relK}.
        (For details on the formalism of relative correspondences and their composition, cf. \cite[Chapter 8]{MNP} whose conventions are met with in our set--up.)
        
       We observe that for any $b\in B$, the restriction  
       \[ \Gamma^{---}\vert_b\ \ \ \in A^2(S_b\times S_b) \]
       is a projector on $H^2(S_b)^{---}$.

In terms of correspondences, the vanishing $H^2_{tr}(S_b)^{---}=0$ in (\ref{noodd}) is equivalent to the statement that
 \[   \bigl(  \Gamma^{---}\vert_b  \bigr) \circ \pi^{S_b}_{2,tr}  =0 \ \ \ \hbox{in}\ H^4(S_b\times S_b) \ \ \ \forall b\in B \ .  \]
 (Here, $\pi^{S_b}_{2,tr}$ is a projector defining the transcendental part of the motive as in theorem \ref{t2}.)
 This is in turn equivalent to the statement that for any $b\in B$, there exists a divisor $D_b\subset S_b$, and a cycle $\gamma_b$ supported on $D_b\times D_b\subset S_b\times S_b$, such that
  \[ \bigl( \Gamma^{---}\vert_b \bigr)   \circ \pi^{S_b}_2    = \gamma_b\ \ \ \hbox{in}\ H^4(S_b\times S_b)\ .\]
  Using a Baire category argument as in \cite[Proposition 3.7]{V0} or \cite[Lemma 1.4]{V1}, these data can be ``spread out'' over the base $B$, i.e. one can find a divisor $\DD\subset\Ss$ and a cycle $\gamma$ supported on $\DD\times_B \DD\subset \Ss\times_B \Ss$ such that
   \[ \bigl( \Gamma^{---} \circ \pi_2^{\Ss}\bigr)\vert_b  = \gamma\vert_b\ \ \ \hbox{in}\ H^4(S_b\times S_b)\ \ \ \forall b\in B\ .\]
  In other words, the relative correspondence
   \[ \Gamma:=  \Gamma^{---}\circ \pi_2^{\Ss}  -\gamma\ \ \ \in A^2(\Ss\times_B \Ss) \]
   is fibrewise homologically trivial:
   \[ \Gamma\vert_b\ \ \ \in A^2_{hom}(S_b\times S_b)\ \ \ \forall b\in B\ .\]
   
   The next step is to make $\Gamma$ globally homologically trivial. Employing a Leray spectral sequence argument as in \cite[Lemmas 3.11 and 3.12]{V0}, this can be done by adding a cycle coming from the ambient space $\PP$. More precisely, the argument of \cite[Lemmas 3.11 and 3.12]{V0} proves the following: up to shrinking the base (i.e., after replacing $B$ by a dense Zariski open $B^\prime\subset B$, and writing $B:=B^\prime$ for simplicity), there exists
    $\delta\in A^2(\PP\times\PP)$ such that 
   \[  \Gamma + (\delta\times B)\vert_{\Ss\times_B \Ss}\ \ \ \in A^2_{hom}(\Ss\times_B \Ss)\ .\]
   
   In view of the fact that $A^2_{hom}(\Ss\times_B \Ss)=0$ (proposition \ref{triv}), it follows that   
    \[  \Gamma + (\delta\times B)\vert_{\Ss\times_B \Ss}=0\ \ \ \hbox{in}\ A^2_{}(\Ss\times_B \Ss)\ .\]
   We know that for any $b\in B$, the restriction $\delta\vert_b$ acts trivially on $A^2_{hom}(S_b)$ (the action factors over $A^\ast_{hom}(\PP)=0$). The above thus implies in particular that
   \[ (\Gamma\vert_b)_\ast=0\colon\ \ \ A^2_{hom}(S_b)\ \to\ A^2_{hom}(S_b)\ \ \ \forall b\in B\ .\]
  By definition of $\Gamma$, this means that
   \[ \bigl(  \Gamma^{---}\vert_b  -\gamma\vert_b\bigr){}_\ast =0\colon\ \ \ A^2_{hom}(S_b)\ \to\ A^2_{hom}(S_b)\ \ \ \forall b\in B\ .\]
       Since for $b\in B$ general, the restriction $\gamma\vert_b$ will still be supported on (divisor)$\times$(divisor), we know that
   \[ (\gamma\vert_b)_\ast=0\colon\ \ \ A^2_{hom}(S_b)\ \to\ A^2_{hom}(S_b)\ \ \ \hbox{for\ general\ }b\in B\ .\]
   Thus, the above simplifies to
   \[ \bigl(  (\Gamma^{---}\circ \pi_2^{\Ss})\vert_b  \bigr){}_\ast =0\colon\ \ \ A^2_{hom}(S_b)\ \to\ A^2_{hom}(S_b)\ \ \  \hbox{for\ general\ }b\in B\ .\]
   Using a Baire category argument as in \cite[Lemma 3.1]{LFu}, this can be extended to {\em all\/} elements of the base $B$: we actually have
    \[ \bigl(  (\Gamma^{---}\circ \pi_2^{\Ss}\vert_b  \bigr){}_\ast =0\colon\ \ \ A^2_{hom}(S_b)\ \to\ A^2_{hom}(S_b)\ \ \ \forall b\in B\ ,\]  
    where $B$ is now once more (as in the beginning of the proof) the parameter space parametrizing {all\/} triple $K3$ burgers as in notation \ref{fam}. 
      
 By construction $\Gamma^{---}\vert_b$ acts on $A^2(S_b)^{---}$ as the identity, and
  \[  (\Gamma^{---}\circ \pi_2^\Ss)\vert_b = \Gamma^{---}\vert_b\circ \pi_2^{S_b} \]
  acts on $A^2_{hom}(S_b)^{---}$ as the identity.
  The above thus implies the vanishing
  \[ A^2_{hom}(S_b)^{---} = 0\ \ \ \forall b\in B\ ,\]
  which proves the first part of the claim. The other parts of the claim are proven similarly, by choosing a different correspondence:
  e.g., for the second vanishing statement one considers the relative correspondence
   \[  \Gamma^{-++}:={1\over 8} ( \Delta_{\Ss}-\Gamma_{\sigma_0}) \circ(\Delta_{\Ss}+\Gamma_{\sigma_1})\circ(\Delta_{\Ss}+\Gamma_{\sigma_2})\circ \pi_2^{\Ss}\ \ \ \in A^2(\Ss\times_B \Ss)  \ .\]
  
%   to prove the remaining vanishing statements of the claim, one considers the relative correspondences
%   \[ \begin{split}  \Gamma^{-++}&:={1\over 8} ( \Delta_{\Ss}-\Gamma_{\sigma_0}) \circ(\Delta_{\Ss}+\Gamma_{\sigma_1})\circ(\Delta_{\Ss}+\Gamma_{\sigma_2})\circ \pi_2^{\Ss}\ \ \ \in A^2(\Ss\times_B \Ss) \ ,\\
    %                        \Gamma^{+-+}&:={1\over 8} ( \Delta_{\Ss}+\Gamma_{\sigma_0}) \circ(\Delta_{\Ss}-\Gamma_{\sigma_1})\circ(\Delta_{\Ss}+\Gamma_{\sigma_2})\circ \pi_2^{\Ss}\ \ \ \in A^2(\Ss\times_B \Ss) \ ,\\   
 %  \] 
       %               \Gamma^{++-}&:={1\over 8} ( \Delta_{\Ss}+\Gamma_{\sigma_0}) \circ(\Delta_{\Ss}+\Gamma_{\sigma_1})\circ(\Delta_{\Ss}-\Gamma_{\sigma_2})\circ \pi_2^{\Ss}\ \ \ \in A^2(\Ss\times_B \Ss) \ ,\\  
           %           \Gamma^{+++}&:={1\over 8} ( \Delta_{\Ss}+\Gamma_{\sigma_0}) \circ(\Delta_{\Ss}+\Gamma_{\sigma_1})\circ(\Delta_{\Ss}+\Gamma_{\sigma_2})\circ \pi_2^{\Ss}\ \ \ \in A^2(\Ss\times_B \Ss) \ .\\
             %     \end{split}\]     

\vskip0.3cm
\noindent
\underline{\it Cases (\rom1), (\rom3), (\rom4) of proposition \ref{3}:} 
The claim is proven by the same argument as in case (\rom1), applied to the family $\Ss\to B$ as specified in notation \ref{fam}. The weighted projective space $\PP$ now has different weights, and the defining equation has a different degree. The trivial Chow groups statement (proposition \ref{triv}) still holds for this family.
\end{proof}

\section{Corollaries}

\subsection{The canonical $0$--cycle}
\label{scan}

In this subsection, we work with integral Chow groups $A^i()_\ZZ$, instead of Chow groups with rational coefficients. Let $S$ be a triple $K3$ burger as in theorem \ref{main}.
Thanks to Rojtman's theorem \cite{Roi}, theorem \ref{main} implies that
  \[ A^2(S)_\ZZ^{+++}\cong \ZZ\ .\]

\begin{definition}\label{can} Let $S$ be a triple $K3$ burger as in theorem \ref{main}. The canonical $0$--cycle $\oo_S$ is defined as the unique degree $1$ cycle such that
  \[  A^2(S)_\ZZ^{+++}= \ZZ[\oo_S]\ \]
  (where $A^2(S)_\ZZ^{+++}$ denotes as before the subspace where $\sigma_j$ acts as the identity for $j=0,1,2$).
  
  Equivalently, $\oo_S$ is the unique degree $1$ cycle $z$ satisfying
    \[   (\Psi_j)^\ast(z) =\oo_{X_j}\ \ \ \hbox{in}\ A^2(X_j)_\ZZ\ \ \ (j=0,1,2)\ ,\]
    where $X_j$ are the associated $K3$ surfaces and the correspondences $\Psi_j\in A^2(X_j\times S)_\ZZ$ are as above.
    
   Equivalently, $\oo_S$ is the unique degree $1$ cycle $z$ satisfying
    \[   (\Psi_j)_\ast  (\Psi_j)^\ast(z) =2 z\ \ \ \hbox{in}\ A^2(S)_\ZZ\ \ \ (j=0,1,2)\ .\]
  \end{definition}
  
  The equivalences in definition \ref{can} are valid because of the following lemma:
  
  \begin{lemma}\label{lemma} Let $S$ be a triple $K3$ burger as in theorem \ref{main}.
  Then
      \[  (\Psi_0)_\ast(\oo_{X_0})= (\Psi_1)_\ast(\oo_{X_1}) = (\Psi_2)_\ast (\oo_{X_2})\ \ \ \in\ A^2(S)_{\ZZ}\ .\]
     \end{lemma}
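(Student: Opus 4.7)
The plan is to show that each $c_j := (\Psi_j)_\ast(\oo_{X_j})$ equals $2\oo_S$ in $A^2(S)_\ZZ$; the lemma follows at once. Each $c_j$ has degree $2$ because the quotient morphism $p_j\colon S\to \bar{X}_j$ has degree $2$, so the real task is to identify the eigenspace components.

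The main step will be to verify that $c_j\in A^2(S)_\QQ^{+++}$, i.e.\ $\sigma_k^\ast c_j = c_j$ for every $k\in\{0,1,2\}$. For $k=j$ this is immediate from the definition of $\Psi_j$, which factors through the quotient by $\sigma_j$. For $k\neq j$, I use that $\sigma_k$ commutes with $\sigma_j$, hence descends to an involution $\bar\sigma_k$ of $\bar{X}_j$; since $X_j$ is the minimal resolution of a surface with only rational double points, $\bar\sigma_k$ lifts uniquely to an automorphism $\tilde\sigma_k$ of $X_j$. A short base-change calculation — exploiting that the subscheme of $X_j\times S$ underlying $\Psi_j$ is stable under the diagonal action $(\tilde\sigma_k,\sigma_k)$ — will yield the intertwining identity
\[ \sigma_k^\ast\circ (\Psi_j)_\ast \ =\ (\Psi_j)_\ast\circ \tilde\sigma_k^\ast\ \colon\ A^\ast(X_j)\ \to\ A^\ast(S)\ . \]
Applied to $a=\oo_{X_j}$, I then invoke the Beauville--Voisin relation $c_2(X_j) = 24\,\oo_{X_j}$ in $A^2(X_j)_\QQ$ together with $\tilde\sigma_k^\ast c_2(X_j) = c_2(X_j)$ (any automorphism preserves the second Chern class) to conclude $\tilde\sigma_k^\ast\oo_{X_j} = \oo_{X_j}$ in $A^2(X_j)_\QQ$, and hence $\sigma_k^\ast c_j = c_j$ in $A^2(S)_\QQ$.

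Once $c_j\in A^2(S)_\QQ^{+++}$ is in hand, the vanishing $A^2_{hom}(S)_\QQ^{+++}=0$ established in claim \ref{claim} of the proof of theorem \ref{main} gives $A^2(S)_\QQ^{+++} = \QQ[\oo_S]$, so the degree-$2$ constraint forces $c_j = 2\,\oo_S$ in $A^2(S)_\QQ$. To promote this to integral coefficients, I use that $q(S)=0$ makes the Albanese of $S$ trivial, whence Rojtman's theorem yields that $A^2(S)_\ZZ$ is torsion-free; the natural map $A^2(S)_\ZZ\hookrightarrow A^2(S)_\QQ$ is therefore injective, and the equality $c_0 = c_1 = c_2\ (=2\,\oo_S)$ holds already in $A^2(S)_\ZZ$. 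The only step with actual content is the intertwining identity above, but once the lift $\tilde\sigma_k$ is available it is routine bookkeeping with projection-formula and base-change; there is no serious obstacle.
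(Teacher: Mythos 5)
Your proof is correct, but it follows a genuinely different route from the paper's. You pin down each pushforward by showing $(\Psi_j)_\ast(\oo_{X_j})\in A^2(S)^{+++}$: the equivariance $\sigma_k^\ast\circ(\Psi_j)_\ast=(\Psi_j)_\ast\circ\tilde\sigma_k^\ast$ (which does hold, since $X_j$ is necessarily the minimal resolution of $\bar{X}_j$, so $\bar\sigma_k$ lifts, and the cycle underlying $\Psi_j$ is the fibre product over $\bar{X}_j$, stable under $(\tilde\sigma_k,\sigma_k)$), combined with the invariance $\tilde\sigma_k^\ast\oo_{X_j}=\oo_{X_j}$ from Beauville--Voisin's $c_2=24\,\oo_{X_j}$; you then conclude $(\Psi_j)_\ast(\oo_{X_j})=2\,\oo_S$ from the vanishing $A^2_{hom}(S)^{+++}=0$ of claim \ref{claim}, a degree count, and Rojtman. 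There is no circularity here, since $\oo_S$ is defined as the degree-one generator of $A^2(S)^{+++}_\ZZ$ independently of the lemma, and the claim is proved before the lemma is needed; but note that your argument leans on theorem \ref{main}, whereas the paper's does not. The paper instead uses the common degree-$2$ quotient $W$ (in case (\rom1) the hypersurface $f(t_0,t_1,t_2,x_3)=0$ in $\PP(2^3,4)$), pulls back an intersection of two divisors $w=D\cdot D^\prime$ from $W$ to each $\bar{X}_j$, applies Beauville--Voisin's ``divisor times divisor is a multiple of $\oo$'' on the associated $K3$'s to get $(r_j\circ p_j)^\ast(w)=d\,(\Psi_j)_\ast(\oo_{X_j})$ simultaneously for $j=0,1,2$, and then divides by $d$ using Rojtman's torsion-freeness. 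Your approach buys the explicit identification $(\Psi_j)_\ast(\oo_{X_j})=2\,\oo_S$ (which is precisely what the equivalences in definition \ref{can} require) at the cost of invoking the main theorem and the lifted involutions; the paper's approach is more elementary and purely geometric, needing only the tower $S\to\bar{X}_j\to W$ and the Beauville--Voisin relation on each $K3$.
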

     
     \begin{proof} The point is that there is a commutative diagram of surfaces
     \[ \begin{array}[c]{ccc}
             & S & \\
          \ \  \ \   \swarrow{\scriptstyle p_0} &\ \  \downarrow{\scriptstyle p_1} &\ \ \ \  \searrow {\scriptstyle p_2}\\
          &&\\
           \bar{X}_0\ \  \ \ \ \ & \bar{X}_1 &\ \ \ \ \ \  \bar{X}_2  \\
           &&\\
         \ \ \ \   \searrow {\scriptstyle r_0} &\ \  \downarrow {\scriptstyle r_1} &\ \ \ \  \swarrow{\scriptstyle r_2}\\
            & W & \\
            \end{array}\]
       where all arrows are degree $2$ morphisms, and $A^2(W)_{\ZZ}=\ZZ$.    
      (In case (\rom1) of theorem \ref{main}, the surface $W$ is defined as the degree $8$ hypersurface in $\PP(2^3,4)$ defined by the equation $f(t_0,t_1,t_2,x_3)=0$, where $f(x_0^2,x_1^2,x_2^2,x_3)=0$ is a defining equation for $S$. For cases (\rom2) and (\rom3), the construction is similar.)   
      
     Let us pick two divisors $D, D^\prime$ on $W$, and set
      \[ w:= D\cdot D^\prime\ \ \ \in A^2(W)\ .\]
      The pullbacks to the various $\bar{X}_j$ are intersections of divisors, and so
      \[ (r_j)^\ast (w) = d \, \oo_{\bar{X}_j}\ \ \ \hbox{in}\ A^2(\bar{X}_j)\ \ \ (j=0,1,2)\ .\]
      (Here, $d=\deg (D\cdot D^\prime)$, and we define $\oo_{\bar{X}_j}$ to be $(q_j)_\ast(\oo_{X_j}$.)
      This implies that
      \[ (\Psi_j)_\ast (d\,\oo_{X_j}) = (p_j)^\ast (d\,\oo_{\bar{X}_j})= (r_j\circ p_j)^\ast (w)\ \ \ \hbox{in}\ A^2(S)_\ZZ\ \ \ (j=0,1,2)\ ,\]
      and so 
        \[  d(\Psi_0)_\ast(\oo_{X_0})= d(\Psi_1)_\ast(\oo_{X_1}) = d(\Psi_2)_\ast (\oo_{X_2})\ \ \ \in\ A^2(S)_{\ZZ}\ .\]
        Using Rojtman's theorem \cite{Roi}, this proves the lemma.
      \end{proof}

  We now recall the definition of the ``effective orbit under rational equivalence'' of a $0$--cycle:
  
  \begin{definition}[Voisin \cite{V21}] Let $S$ be any surface. Given a cycle $z\in A^2(S)_\ZZ$ of degree $k\ge 0$, we define the ``effective orbit'' $O_z$ as
    \[ O_z:= \bigcup_{z^\prime\in X^{(k)}, z^\prime\thicksim_{\rm rat} z}  \hbox{supp} (z^\prime) \ \ \ \subset X^{(k)}\ .\]
  (Here, the union is taken over all $k$--tuples of points $z^\prime$ such that the $0$--cycle associated to $z^\prime$ is rationally equivalent to the $0$--cycle $z$ in $X$.)
  
  One defines
    \[ \dim O_z := \sup_{V\subset O_z} \dim V\ ,\]
    where the supremum runs over all irreducible components $V\subset O_z$
   (we note that $O_z$ is known to be a countable union of closed subvarieties, so this is well--defined).
    \end{definition}
  
  Inspired by \cite{V21}, one can give a nice characterization of the canonical $0$--cycle $\oo_S$:
  
  \begin{proposition}\label{orbit} Let $S$ be a triple $K3$ burger as in theorem \ref{main}. Let $k>0$ be an integer. Then $k\oo_S$ is the unique degree $k$ $0$--cycle $z\in A^2(S)_\ZZ$
  satisfying $\dim O_{z}\ge k$.
  \end{proposition}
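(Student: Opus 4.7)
The plan is to adapt Voisin's analogous result for $K3$ surfaces \cite{V21}, using the correspondences $\Psi_j$ to transfer the statement between $S$ and the $X_j$. The argument splits naturally into uniqueness and existence halves, of which existence is the substantive part.

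For uniqueness, suppose $z\in A^2(S)_\ZZ$ has degree $k$ and $\dim O_z\ge k$. I would consider the rational map
\[
\Phi_j\colon S^{(k)}\dashrightarrow X_j^{(k)},\qquad (s_1,\dots,s_k)\longmapsto \bigl(q_j^{-1}(p_j(s_1)),\dots,q_j^{-1}(p_j(s_k))\bigr),
\]
induced by $\Psi_j$ (with $q_j\colon X_j\to \bar X_j$ the resolution). This map is generically finite: over a generic $k$-tuple $\underline x\in X_j^{(k)}$ the fibre has at most $2^k$ elements, one for each choice of preimage of $q_j(x_i)$ under $p_j$. Since by linearity $\Phi_j(O_z)\subset O_{(\Psi_j)^\ast(z)}$, we get $\dim O_{(\Psi_j)^\ast(z)}\ge k$, and Voisin's theorem for $K3$ surfaces \cite{V21} forces $(\Psi_j)^\ast(z)=k\oo_{X_j}$ in $A^2(X_j)_\ZZ$ for each $j=0,1,2$. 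Thus $w:=z-k\oo_S\in A^2_{hom}(S)_\QQ$ is killed by all three $(\Psi_j)^\ast$. The proof of theorem~\ref{main}, in particular the eigenspace decomposition of $A^2_{hom}(S)$ and the identity $(\Psi_j)_\ast(\Psi_j)^\ast=2\,\mathrm{id}$ on the corresponding summand, shows that a two-sided inverse of the isomorphism of theorem~\ref{main} is given by $\tfrac12\bigl((\Psi_0)^\ast,(\Psi_1)^\ast,(\Psi_2)^\ast\bigr)$, so $w=0$ in $A^2_{hom}(S)_\QQ$. As $A^2_{hom}(S)_\ZZ$ is torsion-free (Rojtman's theorem \cite{Roi} combined with $q(S)=0$), we conclude $z=k\oo_S$ in $A^2(S)_\ZZ$.

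For existence I would exhibit a rational curve $C\subset S$ on which every point has class $\oo_S$; then $C^{(k)}\subset S^{(k)}$ is an irreducible $k$-dimensional subvariety contained in $O_{k\oo_S}$, giving the bound $\dim O_{k\oo_S}\ge k$. To produce $C$, pass to the common quotient $W=S/G$, which is a rational surface (by the argument recalled in the proof of theorem~\ref{main}). Choose a rational curve $R_W\subset W$ meeting each branch divisor $B_j\subset W$ of $r_j\colon\bar X_j\to W$ in at most $2$ points; by Riemann--Hurwitz, $r_j^{-1}(R_W)\subset\bar X_j$ is then rational, and so is its strict transform $\widetilde R_j\subset X_j$. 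For any $s\in s^{-1}(R_W)\subset S$, the image $p_j(s)$ lies on $r_j^{-1}(R_W)=q_j(\widetilde R_j)$, whence $(\Psi_j)^\ast([s])=\oo_{X_j}$ for all $j$ by Beauville--Voisin applied on $X_j$; the characterization of $\oo_S$ in definition~\ref{can} then forces $[s]=\oo_S$. A rational component of $s^{-1}(R_W)\to R_W$ (available as soon as the ramification is tame enough) is taken as $C$.

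The hardest step will be the production of such a rational curve $R_W$ on $W$ with the required intersection properties against each $B_j$. This is a case-by-case verification using the explicit description of $W$ as a weighted hypersurface and the concrete form of the branch divisors $B_j$ provided by propositions~\ref{2} and~\ref{3}, rather than a conceptual obstacle; in each of the four types of burger covered by theorem~\ref{main} the surface $W$ and the $B_j$ are given by explicit weighted equations in a weighted projective $3$-space, so curves of low degree satisfying the requirement are straightforward to write down.
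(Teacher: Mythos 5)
Your uniqueness argument is essentially the paper's: the paper likewise transfers the orbit--dimension bound to the $X_j$ via $(\Psi_j)^\ast$ (asserting, where you justify it with the generically finite map $S^{(k)}\dashrightarrow X_j^{(k)}$, that the orbit of $(\Psi_j)^\ast(z)$ still has dimension $\ge k$), invokes Voisin's theorem \cite{V21}, and concludes using the decomposition coming from theorem \ref{main} together with torsion--freeness of $A^2_{hom}(S)_\ZZ$ \cite{Roi}; the only difference is that the paper runs the computation for arbitrary $d$ in O'Grady's filtration (obtaining $2z\in S^{2k}_{6d}(S)$) and then specializes to $d=0$, whereas you treat $d=0$ directly.

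For existence you take a genuinely different route. The paper picks a rational curve $\bar C$ on a \emph{single} quotient $\bar X_j$ and asserts, via lemma \ref{lemma}, that $p_j^{-1}(\bar C)$ is a constant cycle curve representing $\oo_S$; you instead descend to the common quotient $W$ and pull back a rational curve $R_W$ meeting each branch divisor in at most two points, so that every point of its preimage in $S$ is sent to $\oo_{X_j}$ by \emph{all three} $(\Psi_j)^\ast$ and is therefore pinned to $\oo_S$ by definition \ref{can} -- arguably a more watertight mechanism, since $(\Psi_j)^\ast[p]=\oo_{X_j}$ for one $j$ alone only kills one of the three eigencomponents of $[p]-\oo_S$. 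The price is that you must actually produce such an $R_W$, and this deferred construction is the one real gap in your write--up; it is, however, easily filled with the explicit models of propositions \ref{2} and \ref{3}: for $K^2=2$ the quotient $W$ is a degree $2$ del Pezzo surface (double plane branched in a quartic) and any exceptional curve, i.e.\ a component of the preimage of a bitangent line, meets each branch curve $\{t_i=0\}$ in a single point; for $K^2=3$ the quotient $W$ is a cubic surface and any of its lines does the same, since a line has degree $1$ against each plane section $\{t_i=0\}$. Finally, your last requirement that a component of the preimage of $R_W$ in $S$ be rational (the ``tame ramification'' caveat) is unnecessary: all you need is that every point of that curve has class $\oo_S$, so its $k$--th symmetric power lies in $O_{k\oo_S}$ and has dimension $k$ regardless of the genus of the curve upstairs.
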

  
  \begin{proof} We actually prove a somewhat more general statement, which is based on Voisin's result \cite[Theorem 1.4]{V21}. This result of Voisin's gives an alternative description of O'Grady's filtration $S^k_d()$ on the Chow group of $0$--cycles of a $K3$ surface, in terms of effective orbits. We recall that for any $K3$ surface $X$, O'Grady's filtration \cite{OG} is defined as
    \begin{equation}\label{filt} S^k_d(X):=  \bigl\{    z\in A^2(X)_\ZZ\ \vert\ z= z^\prime + (k-d)\oo_X      \bigr\}\ ,\end{equation}
    where $z^\prime$ is effective of degree $d$ and $\oo_X$ is the canonical $0$--cycle.
    
   Voisin gives an interesting alternative description of the O'Grady filtration: for any $k>d\ge 0$, she proves \cite[Theorem 1.4]{V21} that
   \begin{equation}\label{alt} S^k_d(X)=  \bigl\{ z\in A^2(X)_\ZZ\ \vert\ O_z\subset X^{(k)}\not=\emptyset\ \hbox{and}\ \dim O_z\ge k-d\bigr\}\ .
       \end{equation}
       
     Let us now consider a triple $K3$ burger $S$ as in theorem \ref{main}. The canonical $0$--cycle $\oo_S$ exists, and so definition (\ref{filt}) makes sense for $S$.  
     
 \noindent
 {\it Step 1 (Unicity):}
       Let $z\in A^2(S)_\ZZ$ of degree $k$, and let us assume that the orbit $O_z\subset S^{(k)}$ is non--empty of dimension $\ge k-d$, for some $k>d\ge 0$. 
    According to (the proof of) theorem \ref{main}, we can write $z$ uniquely as
    \[ z=k\oo_S +z_0 +z_1 +z_2\ \ \ \hbox{in}\ A^2(S)_\ZZ\ ,\]
    where $z_0\in A^2_{hom}(S)_\ZZ^{+--}$ and $z_1, z_2$ are in $A^2_{hom}(S)_\ZZ^{-+-}$ resp. in $A^2_{hom}(S)_\ZZ^{--+}$.
     
     The assumption on $O_z$ implies that the cycles
     \[   (\Psi_j)^\ast (z) =  k\oo_{X_j} + (\Psi_j)^\ast(z_j)  \in A^2(X_j)_\ZZ\ \ \ (j=0,1,2) \]
     also have orbits $O_{z_j}$ of dimension $\ge k-d$. Therefore, Voisin's result (\ref{alt}) implies that
     \[ (\Psi_j)^\ast (z)\ \ \ \in S^k_d(X_j)\ \ \ (j=0,1,2)\ ,\]
     i.e. one can write
     \[    (\Psi_j)^\ast(z)=  k\oo_{X_j}+  (\Psi_j)^\ast(z_j)  = z^\prime_j + (k-d)\oo_{X_j}\ \ \ \hbox{in}\ A^2(X_j)_\ZZ\ \ \ (j=0,1,2)\ ,\]
     where $z^\prime_j$ is effective of degree $d$. 
     It follows that
     \[  (\Psi_j)^\ast(z_j) = z^\prime_j -d\, \oo_{X_j}\ \ \ \hbox{in}\ A^2(X_j)_\ZZ\ \ \ (j=0,1,2)\ .\]
       
  Using the proof of theorem \ref{main}, we find that
   \[ \begin{split} 2z &= 2k\,\oo_S + 2z_0 +2 z_1 +2z_2\\
           &=    2k\,\oo_S+    (\Psi_0)_\ast(\Psi_0)^\ast(z_0) + (\Psi_1)_\ast(\Psi_1)^\ast(z_1) +   (\Psi_2)_\ast(\Psi_2)^\ast(z_2)  \\
                               &=2k\,\oo_S (\Psi_0)_\ast( z^\prime_0 -d\,\oo_{X_0}           ) + (\Psi_1)_\ast(z^\prime_1-d\,\oo_{X_1}) + (\Psi_2)_\ast(z^\prime_2-d\,\oo_{X_2}) \\
                               &= 2(k-3d)\oo_S + b_0+b_1 +b_2\ \ \ \ \ \ \hbox{in}\ A^2(S)_\ZZ\ ,\\
                           \end{split}\]
                        where $b_0+b_1+b_2$ is effective of degree $6d$. That is, we have
                    \[ 2z\ \ \ \in\ S^{2k}_{6d}(S)\ .\]
 
 In particular, taking $d=0$ we obtain the following implication: if $z$ is a degree $k$ cycle with orbit $O_z$ of dimension $\ge k$, then 
   \[ 2z=2k\,\oo_S\ \ \  \hbox{in}\ A^2(S)_\ZZ\ .\] 
   As $A^2_{hom}(S)_\ZZ$ is torsion free, it follows that
   \[ z=k\,\oo_S\ \ \ \hbox{in}\ A^2(S)_\ZZ\ .\]

%  \noindent
 % {\it Step 2:} Let us now start out with a $0$--cycle
   %  \[  z\in S^k_d(S)\ \ \ (k>d\ge 0).\]     
    % Then, one finds (using the moving lemma to avoid the singular points of $\bar{X_j}$) that
   %  \[ v_j:= (\Psi_j)_\ast(z)\ \ \ \in \ S^k_d(X_j)\ , \]
   %  for any choice of $j$. According to (\ref{alt}), this means that $v_j$ has an effective orbit $O_{v_j}\subset (X_j)^{(k)}$ of dimension $\ge k-d$.
  
   \noindent
   {\it Step 2 (Existence):} We now prove that the cycle $z=k\,\oo_S$ has orbit of dimension $\ge k$. This is 
   the easier direction. Take $j\in\{0,1,2\}$, and let $\bar{C}\subset\bar{X}_j$ be any rational curve. 
   % clear for $k=1$: one may take any rational curve $\bar{C}$ in $\bar{X}_j$. 
  Using lemma \ref{lemma}, one finds that the curve $C:=(p_j)^{-1}(\bar{C})\subset S$ is a constant cycle curve, and any point $p\in C$ is such that $(\Psi_j)^\ast (p)=\oo_{X_j}$ and so $p$ represents $\oo_S$. This proves the statement for $k=1$. For $k>1$, one notes that $C^{(k)}\subset S^{(k)}$ is contained in the orbit of $k\,\oo_S$.
                \end{proof}

 Let $Z$ be any smooth projective variety (say of dimension $n$), and let $z\in A^n_{hom}(Z)$ be a degree $0$ $0$--cycle.
 It is known that $z$ is {\em smash--nilpotent\/}, meaning that 
    \[ \begin{array}[c]{ccc}  z^{\times(N)}:= &\undermat{(N\ \hbox{ times})}{z\times\cdots\times z}&=0\ \ \hbox{in}\  A^{Nn}(Z^{n})_{}\ 
  \end{array}\]
   \vskip0.6cm  
   \noindent
  for $N>>0$ \cite{Voe}, \cite{V9}.
  In the special case of the varieties under consideration in this note, one can give a precise estimate for the smash--nilpotence index $N$:

 \begin{proposition}\label{smash} Let $S$ be a triple $K3$ burger as in theorem \ref{main}. Let $z\in A^2_{hom}(S)$ be a $0$--cycle of the form
   \[ z= z^\prime - d\oo_S\ \ \ \in A^2_{hom}(S) \ ,\]
   where $z^\prime$ is an effective cycle of degree $d$. 
   %(i.e., $z\in S^0_d(S)$).
   Then
    \[ \begin{array}[c]{ccc}  z^{\times(3d+1)}:= &\undermat{((3d+1)\ \hbox{ times})}{z\times\cdots\times z}&=0\ \ \hbox{in}\  A^{6d+2}(S^{3d+1})_{}\ .
  \end{array}\]
  \end{proposition}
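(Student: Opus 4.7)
The plan is to transport the problem to the associated $K3$ surfaces $X_j$ via the correspondences $\Psi_j$, then invoke Voisin's smash--nilpotence estimate for $0$--cycles on $K3$ surfaces, and finish with a pigeonhole count. By (the proof of) Theorem \ref{main}, every $z\in A^2_{hom}(S)$ satisfies
\[ 2z\ =\ (\Psi_0)_\ast(\Psi_0)^\ast(z)+(\Psi_1)_\ast(\Psi_1)^\ast(z)+(\Psi_2)_\ast(\Psi_2)^\ast(z)\ \ \hbox{in } A^2_{hom}(S). \]
Setting $w_j:=(\Psi_j)^\ast(z)\in A^2_{hom}(X_j)$, the identity $(\Psi_j)^\ast(\oo_S)=\oo_{X_j}$ from Definition \ref{can}, together with the fact that $(\Psi_j)^\ast$ preserves the degree of $0$--cycles and (for suitable effective representatives) sends effective cycles to effective cycles, gives
\[ w_j\ =\ w_j'-d\,\oo_{X_j}\ \ \hbox{in } A^2_{hom}(X_j), \]
with $w_j'$ effective of degree $d$ on the $K3$ surface $X_j$.

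The second step invokes the following smash--nilpotence result of Voisin: for any $K3$ surface $X$ and any $w=w'-d\,\oo_X\in A^2_{hom}(X)$ with $w'$ effective of degree $d$,
\[ w^{\times(d+1)}\ =\ 0\ \ \hbox{in } A^{2(d+1)}(X^{d+1}). \]
This estimate belongs to the same circle of ideas as the description of O'Grady's filtration via effective orbits, \cite[Theorem 1.4]{V21}, already used in the proof of Proposition \ref{orbit}. Pushing forward by the $(d+1)$--fold exterior product $\Psi_j\times\cdots\times\Psi_j$, which commutes with external products of cycles, immediately yields
\[ \bigl((\Psi_j)_\ast w_j\bigr)^{\times(d+1)}\ =\ 0\ \ \hbox{in } A^{2(d+1)}(S^{d+1}),\quad j=0,1,2. \]

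For the third step, expand $(2z)^{\times(3d+1)}$ multilinearly. Up to a combinatorial constant, each resulting term is of the form
\[ \bigl((\Psi_{j_1})_\ast w_{j_1}\bigr)\times\cdots\times\bigl((\Psi_{j_{3d+1}})_\ast w_{j_{3d+1}}\bigr) \]
for some tuple $(j_1,\ldots,j_{3d+1})\in\{0,1,2\}^{3d+1}$. By pigeonhole, at least one value $j\in\{0,1,2\}$ occurs $\ge d+1$ times; after permuting the factors of $S^{3d+1}$ (a harmless operation on cycle classes), the corresponding sub--product equals $((\Psi_j)_\ast w_j)^{\times(d+1)}=0$. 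Hence every term vanishes and $z^{\times(3d+1)}=0$, as claimed.

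The main obstacle is the $K3$ smash--nilpotence input used in step two: the entire passage from $K3$'s to $S$ is a formal consequence of the splitting established in Theorem \ref{main} plus a pigeonhole bound, while the $K3$ estimate is a genuine theorem about Chow groups of $K3$ surfaces. Since it lies in the same family of results already cited in Proposition \ref{orbit}, it should be available from the literature.
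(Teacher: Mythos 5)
Your argument is correct, and it reaches the conclusion by a somewhat more elementary route than the paper. The paper first upgrades Theorem \ref{main} to an isomorphism of transcendental motives $t(S)\cong t(X_0)\oplus t(X_1)\oplus t(X_2)$ (Corollary \ref{main2}), takes $r$-th tensor powers to decompose $t(S^r)$, and uses the induced injection $A^{2r}(t(S)^{\otimes r})\hookrightarrow\bigoplus_{r_0+r_1+r_2=r}A^{2r_0}((X_0)^{r_0})\otimes A^{2r_1}((X_1)^{r_1})\otimes A^{2r_2}((X_2)^{r_2})$: for $r=3d+1$ each summand has some $r_j\ge d+1$, the image of $z^{\times r}$ vanishes there by the $K3$ estimate, and injectivity kills $z^{\times r}$. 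You instead use only the Chow-level identity $2z=(\Psi_0)_\ast(\Psi_0)^\ast z+(\Psi_1)_\ast(\Psi_1)^\ast z+(\Psi_2)_\ast(\Psi_2)^\ast z$ (which is indeed available from the proof of Theorem \ref{main}, and is the same identity exploited in the proof of Proposition \ref{orbit}), expand $(2z)^{\times(3d+1)}$ multilinearly, and annihilate each term by pigeonhole after pushing the $K3$ vanishing forward along $\Psi_j\times\cdots\times\Psi_j$; dividing by $2^{3d+1}$ is harmless with $\QQ$-coefficients. This buys you independence from Corollary \ref{main2} and the motivic tensor-power formalism: you only need the ``surjectivity half'' of Theorem \ref{main} plus the compatibility of correspondences with external products, whereas the paper's injectivity argument needs the full motivic isomorphism. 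The essential external input is the same in both proofs, namely that $w\in S^0_d(X)$ on a $K3$ surface implies $w^{\times(d+1)}=0$; the paper quotes this as O'Grady \cite[(5.0.1)]{OG} (resting on Voisin \cite{V9}), which is exactly the statement you appeal to. Finally, both you and the paper pass silently over the verification that $(\Psi_j)^\ast$ carries $z=z'-d\oo_S$ to a class of the form (effective of degree $d$)$\,-\,d\oo_{X_j}$, i.e. that $(\Psi_j)^\ast z\in S^0_d(X_j)$; your parenthetical hedge about suitable effective representatives is at the same level of detail as the paper's corresponding step.
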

  \vskip0.6cm  
   
  \begin{proof} The assumption means that $z$ is in the subgroup $S^0_d(S)$ of the O'Grady filtration mentioned in the proof of proposition \ref{orbit} above.
This implies that
  \[ (\Psi_j)^\ast(z)\ \ \ \in S^0_d(X_j)\ ,\ \ \ j=0,1,2\ .\]  
  
  For any positive integer $r$, theorem \ref{main2} gives an isomorphism of Chow motives
   \[ t(S^r)\cong \bigoplus_{r_0+r_1+r_2=r} t((X_0)^{r_0})\otimes t((X_1)^{r_1})\otimes t((X_2)^{r_2})\ \ \ \hbox{in}\ \MM_{\rm rat}\ \]
   (induced by the $\Psi_j$),
   and so there is an isomorphism of Chow groups
   \[\label{iso}  \begin{split}  &\sum_{r_0+r_1+r_2=r} \bigl( ((\Psi_0)^{r_0})^\ast,  ((\Psi_1)^{r_1})^\ast,((\Psi_2)^{r_2})^\ast  \bigr)\colon \\
        &\ \ \ \ \ \ \  A^{2r}(t(S)^{\otimes r})\ \xrightarrow{\cong}\ \bigoplus_{r_0+r_1+r_2=r}  A^{2r_0}(t(X_0)^{\otimes r_0})\otimes    A^{2r_1}(t(X_1)^{\otimes r_1})\otimes  A^{2r_2}(t(X_2)^{\otimes r_2})\ .\\
        \end{split}\]
  
  In particular, this implies that there is an injection
   \begin{equation}\label{inj}        \begin{split}  &\sum_{r_0+r_1+r_2=r} \bigl( ((\Psi_0)^{r_0})^\ast,  ((\Psi_1)^{r_1})^\ast,((\Psi_2)^{r_2})^\ast  \bigr)\colon \\
        &\ \ \ \ \ \ \  A^{2r}(t(S)^{\otimes r})\ \hookrightarrow\ \bigoplus_{r_0+r_1+r_2=r}  A^{2r_0}((X_0)^{ r_0})\otimes    A^{2r_1}((X_1)^{ r_1})\otimes  A^{2r_2}((X_2)^{r_2})\ .\\
        \end{split}\end{equation}

   Consider now the element $z^{\times r}$ for $r\ge 3d+1$. Since $z\in A^2_{hom}(S)=A^2(t(S))$, we have
     \[ z^{\times r}\ \ \ \in\ A^{2r}(t(S)^{\otimes r})\ .\]      
            The image of $z^{\times r}$ in the right--hand side of the injection (\ref{inj}) is a sum of $0$--cycles on the various products $(X_0)^{r_0}\times (X_1)^{r_1}\times (X_2)^{r_2}$. In each summand, one of the integers $r_0, r_1, r_2$ must be $\ge d+1$. The proposition now follows from the following lemma:
   
   \begin{lemma}[O'Grady \cite{OG}] Let $X$ be a $K3$ surface, and let $z\in S^0_d(X)$. Then
     \[ z^{\times (d+1)}=0\ \ \ \hbox{in}\ A^{2d+2}(X^{d+1})\ .\]
     \end{lemma}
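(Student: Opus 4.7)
The plan is to reduce the general statement to the special case $d=1$, and then invoke O'Grady's argument for that base case.

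For the reduction, I would write the effective representative of $z$ as $z'=p_{1}+\cdots+p_{d}$, so that $z=\sum_{j=1}^{d}(p_{j}-\oo_{X})$. Expanding the external product, and letting $\pi_{i}\colon X^{d+1}\to X$ denote the $i$-th projection,
\[
z^{\times(d+1)}=\prod_{i=1}^{d+1}\pi_{i}^{*}(z)=\sum_{\phi\colon\{1,\ldots,d+1\}\to\{1,\ldots,d\}}\;\prod_{i=1}^{d+1}\pi_{i}^{*}(p_{\phi(i)}-\oo_{X}).
\]
Since $d+1>d$, every map $\phi$ must take some value $j\in\{1,\ldots,d\}$ at two distinct indices $i\neq i'$. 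Writing $\pi_{i,i'}\colon X^{d+1}\to X\times X$ for the projection onto the $(i,i')$-factors, the two corresponding factors in the summand combine as
\[
\pi_{i}^{*}(p_{j}-\oo_{X})\cdot\pi_{i'}^{*}(p_{j}-\oo_{X})=\pi_{i,i'}^{*}\bigl((p_{j}-\oo_{X})^{\times 2}\bigr).
\]
Thus each summand vanishes as soon as $(p-\oo_{X})^{\times 2}=0$ in $A^{4}(X\times X)$ for every $p\in X$, and the reduction to $d=1$ is complete.

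The main obstacle is now the base case: for every point $p\in X$, one must show $(p-\oo_{X})^{\times 2}=0$ in $A^{4}(X\times X)$. This is the essential content of O'Grady's nilpotence result. The argument would rely on the Beauville--Voisin decomposition of the diagonal of the K3 surface $X$, combined with the Bogomolov--Mumford theorem guaranteeing an infinite family of rational curves on $X$. Exploiting these, one represents $p-\oo_{X}$ by a cycle supported on constant cycle curves, on which the smash-square vanishes manifestly. Given the base case, the combinatorial reduction above then delivers the general statement directly.
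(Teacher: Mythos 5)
Your pigeonhole reduction to the case $d=1$ is correct: writing $z=\sum_{j}(p_j-\oo_X)$, every term in the multilinear expansion of $z^{\times(d+1)}$ contains some factor $p_j-\oo_X$ in two slots, and an exterior product vanishes as soon as one of its two-factor blocks does; so the lemma would indeed follow from the statement that $(p-\oo_X)^{\times 2}=0$ in $A^4(X\times X)$ for \emph{every} $p\in X$. The gap is in your justification of that base case, which is where all the content sits. A degree-zero $0$-cycle supported on constant cycle curves is already zero in $A^2_{hom}(X)$: on a $K3$ surface every point of a constant cycle curve represents $\oo_X$. Hence $p-\oo_X$ can be ``represented by a cycle supported on constant cycle curves'' only when $p$ itself is rationally equivalent to $\oo_X$; by Mumford's infinite-dimensionality theorem this fails for a very general point, and the rational curves furnished by Bogomolov--Mumford form only a countable union of curves, so they miss such a $p$. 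As written, your key step would prove $A^2_{hom}(X)=0$, which is absurd; so the base case is genuinely unproved in your proposal.

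The base case is nonetheless true and can be repaired in two honest ways. Either specialize the paper's argument to $d=1$: the geometric input quoted from O'Grady \cite{OG} lets you represent $p-\oo_X$ by a degree-zero cycle $w$ on a curve $C\subset X$ of geometric genus $\le 1$ (using \cite{BV} to produce a cycle supported on $C$ whose class is a multiple of $\oo_X$), and then $w^{\times 2}=0$ on $C\times C$ by Voisin's curve result \cite{V9}. Or, closer in spirit to your appeal to a decomposition of the diagonal: use the Beauville--Voisin relation for the small diagonal $\Delta_{123}\in A_2(X^3)$ of \cite{BV}; letting it act as a correspondence from the third factor to the first two on the point $p$ gives exactly $p\times p=p\times\oo_X+\oo_X\times p-\oo_X\times\oo_X$, i.e.\ $(p-\oo_X)^{\times 2}=0$. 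For comparison, the paper does not reduce to $d=1$ at all: it represents the whole cycle $z$ by a degree-zero cycle $w$ on a curve of geometric genus $d$ and quotes $w^{\times(d+1)}=0$ from \cite{V9}; your pigeonhole reduction together with a corrected base case would be a valid, slightly different route, trading the genus-$d$ curve input for the $d=1$ statement.
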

     
   \begin{proof} This is established in \cite[(5.0.1)]{OG}. The reason is that $z$ can be represented by a degree $0$ $0$--cycle $w$ on a curve $C\subset X$ of geometric genus $d$. This proves the lemma, for it is known since \cite{V9} that $w^{\times (d+1)}=0$ in $A^{d+1}(C^{d+1})$. 
    \end{proof}  
      
     \end{proof}

\subsection{The canonical $0$--cycle, bis}

\begin{definition} Let $S$ be a triple $K3$ burger, and let $X_j$ ($j=0,1,2$) be the associated $K3$ surfaces. By definition, the subgroup of $K3$--type divisors
$A^1_{K3}(S)_\ZZ\subset A^1(S)_\ZZ$ is defined as
  \[  \begin{split} A^1_{K3}(S)_\ZZ:=  \bigl( (\Psi_0)_\ast A^1(X_0)_\ZZ\cap (\Psi_1)_\ast A^1(X_1)_\ZZ\bigr) +  \bigl( (\Psi_0)_\ast A^1(X_0)_\ZZ\cap (\Psi_2)_\ast A^1(X_2)_\ZZ\bigr)&\\
   +
                              \bigl( (\Psi_1)_\ast A^1(X_1)_\ZZ\cap (\Psi_2)_\ast A^1(X_2)_\ZZ\bigr) \ .&\\
                              \end{split}\]
    That is, 
    \[ A^1_{K3}(S)_\ZZ= A^1(S)_\ZZ^{+++}\oplus A^1(S)_\ZZ^{++-}\oplus A^1(S)_\ZZ^{+-+}\oplus A^1(S)_\ZZ^{-++}\ .\]
    \end{definition}

\begin{proposition}\label{int} Let $S$ be a triple $K3$ burger as in theorem \ref{main}. Let $D, D^\prime\in A^1_{K3}(S)_\ZZ$. Then
  \[ D\cdot D^\prime = \deg(D\cdot D^\prime) \, \oo_S\ \ \ \hbox{in}\ A^2(S)_\ZZ\ .\]
  \end{proposition}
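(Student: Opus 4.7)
The strategy is to decompose $D$ and $D'$ along the four given eigensummands of $A^1_{K3}(S)_\ZZ$, and then to apply the Beauville--Voisin relation on the quotient $K3$ surface associated with a common invariant involution. Using the direct-sum decomposition preceding the proposition, write
\[
D = D^{+++}+D^{++-}+D^{+-+}+D^{-++},\qquad D' = D'^{+++}+D'^{++-}+D'^{+-+}+D'^{-++},
\]
so that $D\cdot D' = \sum_{\epsilon,\epsilon'} D^\epsilon\cdot D'^{\epsilon'}$. It suffices to show each summand lies in $\ZZ\cdot\oo_S$; the overall coefficient is then forced to equal $\deg(D\cdot D')$ by taking degrees, using that $A^2(S)_\ZZ^{+++}=\ZZ[\oo_S]$.

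The key combinatorial remark is pigeonhole: each of the four characters appearing has at most one minus sign, so for any ordered pair $(\epsilon,\epsilon')$ there exists $j\in\{0,1,2\}$ with $\epsilon_j=\epsilon'_j=+$. In particular, $D^\epsilon$ and $D'^{\epsilon'}$ are simultaneously $\sigma_j$-invariant, hence both pull back from $\bar{X}_j=S/\langle\sigma_j\rangle$: one can write $D^\epsilon=(p_j)^*E$ and $D'^{\epsilon'}=(p_j)^*E'$ for some $E,E'\in A^1(\bar{X}_j)_\ZZ$ (working with $\QQ$-coefficients first, then upgrading via Rojtman's theorem since $q(S)=0$). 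Consequently $D^\epsilon\cdot D'^{\epsilon'}=(p_j)^*(E\cdot E')$.

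Now lift to the minimal $K3$ resolution $q_j:X_j\to\bar{X}_j$: by Beauville--Voisin \cite{BV} applied to $q_j^*E,q_j^*E'\in A^1(X_j)_\ZZ$, one has $(q_j^*E)\cdot(q_j^*E')=\deg(E\cdot E')\,\oo_{X_j}$ in $A^2(X_j)_\ZZ$. Pushing forward by the birational map $q_j$ and invoking the projection formula gives $E\cdot E'=\deg(E\cdot E')\,\oo_{\bar{X}_j}$ in $A^2(\bar{X}_j)_\ZZ$, where $\oo_{\bar{X}_j}:=(q_j)_*\oo_{X_j}$. Finally, pulling back via $p_j$ and using the base-change identity $(p_j)^*(q_j)_* = (\Psi_j)_*$ (which follows from the cartesian square that defines $\Psi_j$), combined with lemma \ref{lemma} and the degree computation $\deg((\Psi_j)_*\oo_{X_j})=2$ from definition \ref{can}, yields $(p_j)^*\oo_{\bar{X}_j}=(\Psi_j)_*\oo_{X_j}=2\,\oo_S$. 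Therefore $D^\epsilon\cdot D'^{\epsilon'}=2\deg(E\cdot E')\,\oo_S\in\ZZ\cdot\oo_S$, and summing over $(\epsilon,\epsilon')$ finishes the proof.

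The main obstacle is the smooth-versus-singular comparison between $X_j$ and $\bar{X}_j$ when transporting the Beauville--Voisin relation across the resolution; this is handled routinely because the singularities of $\bar{X}_j$ are rational double points, so $0$-cycles push forward isomorphically and the projection formula applies. A minor technical point is the integrality of the eigenspace decomposition, which is absorbed by the explicit direct-sum description of $A^1_{K3}(S)_\ZZ$ stated in the paper together with the absence of torsion in $A^2_{hom}(S)_\ZZ$ (Rojtman's theorem, since $q(S)=0$).
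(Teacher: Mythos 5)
Your argument is correct, and it takes a genuinely different (more uniform) route than the paper. The paper proves the proposition by a three-case analysis on the eigensummands: products of two divisors from the same summand land in $A^2(S)^{+++}=\QQ[\oo_S]$; products involving the $+++$ summand and another land in an eigenspace with an odd number of minus signs, which vanishes by the Claim established in the proof of theorem \ref{main}; and only for products of two distinct non-$+++$ summands does it invoke Beauville--Voisin, by pushing down to $\bar{X}_j$ via $(\Psi_j)^\ast=(q_j)^\ast(p_j)_\ast$ and using injectivity of $(\Psi_j)^\ast$ on that eigenspace. You instead treat all cross-terms at once: the pigeonhole observation that any two of the characters $+++,++-,+-+,-++$ share a position where both are $+$, descent of both divisors to the common quotient $\bar{X}_j$, Beauville--Voisin on the resolution $X_j$, and the normalization $(p_j)^\ast\oo_{\bar{X}_j}=(\Psi_j)_\ast\oo_{X_j}=2\,\oo_S$ (which indeed follows from lemma \ref{lemma} and definition \ref{can}). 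What this buys: your proof uses theorem \ref{main} only through the existence and characterization of $\oo_S$ (i.e.\ $A^2(S)_\ZZ^{+++}=\ZZ[\oo_S]$), and not through the vanishing of the odd eigenspaces $A^2(S)^{++-}=\cdots=0$, so it is more self-contained and handles the degree-zero cross-terms and the nontrivial-degree terms by one and the same computation. The price is that you multiply and pull back divisor classes on the singular quotients $\bar{X}_j$, so you need the standard $\QQ$-coefficient intersection formalism for surfaces with rational double points ($\QQ$-Cartier pullback, projection formula, compatibility of $(p_j)^\ast$ with products); this is routine, and in fact the paper's own Case 3 uses the same ingredients ($D=(p_2)^\ast\bar{F}$, projection formula, $(q_2)^\ast$), so your argument meets the paper's own standard of rigor. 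The reduction to $\QQ$-coefficients via Rojtman and the final bookkeeping by degrees are exactly as in the paper.
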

  
  \begin{proof} Since $A^2_{hom}(S)_\ZZ$ is torsion free \cite{Roi}, it suffices to prove the statement for Chow groups with $\QQ$--coefficients.
  We have seen that
    \[ A^1_{K3}(S)= A^1(S)^{+++}\oplus A^1(S)^{++-}\oplus A^1(S)^{+-+}\oplus A^1(S)^{-++}\ .\]
    Assuming that $D$ and $D^\prime$ are in the same summand of this decomposition, we have 
    \[ D\cdot D^\prime\ \ \ \in A^2(S)^{+++}=\QQ[\oo_S]\ ,\]
    and we are done.
    
    Next, let us assume $D$ is in the first summand and $D^\prime$ is in another summand (say the second). Then
    \[ D\cdot D^\prime\ \ \ \in A^2(S)^{++-}\ .\]
    But $A^2(S)^{++-}=0$ (proof of theorem \ref{main}), and so $D\cdot D^\prime=0$.
    
    Finally, let us assume $D$ and $D^\prime$ are in two different summands and neither is in the first summand (say $D\in A^1(S)^{+-+}$ and $D^\prime\in A^1(S)^{-++}$). Then
     \[ D\cdot D^\prime\ \ \ \in A^2(S)^{--+}\ .\]
     We have seen (proof of theorem \ref{main}) that $A^2(S)^{--+}$ is mapped isomorphically (under $(\Psi_2)^\ast$) to $A^2_{hom}(X_2)$, and so to prove that $D\cdot D^\prime=0$, it suffices to prove that
      \[ (\Psi_2)^\ast (D\cdot D^\prime)\stackrel{??}{=}0\ \ \ \hbox{in}\ A^2_{hom}(X_2)\ .\]
      
   To this end, recall that (by construction) $(\Psi_2)^\ast = (q_2)^\ast (p_2)_\ast$ (where $p_2\colon S\to\bar{X}_2$ is projection to the $K3$ surface with double points, and $q_2\colon X_2\to \bar{X}_2$ is a resolution of singularities). Hence,
    \[ \begin{split}   (\Psi_2)^\ast (D\cdot D^\prime) &= (q_2)^\ast  (p_2)_\ast (D\cdot D^\prime)\\
                                                                               &=(q_2)^\ast \bigl(\bar{F}\cdot (p_2)_\ast(D^\prime)\bigr)\\
                                                                               &= (q_2)^\ast(\bar{F})\cdot (q_2)^\ast (p_2)_\ast(D^\prime)\\
                                                                               &=0\ \ \ \ \ \ \hbox{in}\ A^2_{hom}(X_2)\ .\\
                                                                               \end{split}\]
                                                          Here, $\bar{F}\in A^1(\bar{X}_2)$ is a divisor such that $D=(p_2)^\ast(\bar{F})$. The last line follows from the celebrated Beauville--Voisin result that 
                      \[ \bigl(A^1(X_2)\cdot A^1(X_2)\bigr) \cap A^2_{hom}(X_2)=0 \] 
                      for any $K3$ surface $X_2$ \cite{BV}.                    
                                                                            \end{proof}

\begin{remark} The behaviour displayed in proposition \ref{int} is remarkable, because the dimension of $A^1_{K3}(S)$ tends to be large. For example, let $S$ be a triple $K3$ burger with $K^2=2$. Then $A^1(S)^{+++}$ coincides with $A^1(T)$, where
  \[ T:= S/<\sigma_0,\sigma_1,\sigma_2>\ .\]
  The surface $T$ can be identified with a degree $4$ hypersurface in $\PP(1^3,2)$. Hence, $T$ is isomorphic to the double cover of $\PP^2$ branched along a quartic curve. In case the quartic curve is smooth, one has $\dim A^1(T)=\dim H^2(T)=8$ \cite{Ste}, and so
  \[ \dim A^1_{K3}(S)\ge \dim A^1(S)^{+++}=8\ .\]
\end{remark}

\subsection{Bloch conjecture}

 \begin{corollary}\label{cor1.5} Let $S$ be a triple $K3$ burger as in theorem \ref{main}, and let $\sigma_0,\sigma_1, \sigma_2$ be the three covering involutions. Let $f\in\aut(S)$ be a finite--order automorphism that commutes with the $\sigma_j$, and such that
   \[ f^\ast=\ide\colon\ \ \ H^{2,0}(S)\ \to\ H^{2,0}(S)\ .\]
   Then also
    \[ f^\ast=\ide\colon\ \ \ A^{2}_{}(S)\ \to\ A^{2}(S)\ .\] 
   \end{corollary}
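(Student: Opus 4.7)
The plan is to transfer the problem, via the correspondences $\Psi_j$, to three Bloch-type statements on the associated $K3$ surfaces, and then reassemble using theorem \ref{main}.

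First I would descend $f$ to each of the $K3$ quotients. Since $f$ commutes with each $\sigma_j$, it descends to an automorphism $\bar f_j\in\aut(\bar X_j)$ of the quotient $\bar X_j=S/\langle\sigma_j\rangle$, which lifts uniquely to a finite-order automorphism $f_j\in\aut(X_j)$ of the minimal resolution (the lift of an automorphism to a minimal resolution of ADE singularities is canonical). By functoriality the correspondence $\Psi_j$ from remark \ref{Htr} intertwines the two actions: $\Gamma_f\circ\Psi_j=\Psi_j\circ\Gamma_{f_j}$ in $A^2(X_j\times S)$.

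Second, by the very definition of a triple $K3$ burger we have an isomorphism $H^{2,0}(S)\cong\bigoplus_{j=0}^{2} H^{2,0}(X_j)$ induced by the $(\Psi_j)_\ast$, each summand being one-dimensional. Together with the hypothesis $f^\ast=\ide$ on $H^{2,0}(S)$, this forces $(f_j)^\ast=\ide$ on $H^{2,0}(X_j)$ for each $j$; that is, each $f_j$ is a symplectic finite-order automorphism of a projective $K3$ surface. The Bloch conjecture in this setting is known — by Voisin in the involution case and by Huybrechts for higher orders — so $(f_j)^\ast=\ide$ on $A^2(X_j)$ for $j=0,1,2$.

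Third, combine: theorem \ref{main} provides an isomorphism $\bigl((\Psi_0)_\ast,(\Psi_1)_\ast,(\Psi_2)_\ast\bigr)\colon\bigoplus_{j} A^2_{hom}(X_j)\xrightarrow{\cong} A^2_{hom}(S)$, and the intertwining from step one together with $(f_j)^\ast=\ide$ on each $A^2_{hom}(X_j)$ forces $f^\ast=\ide$ on $A^2_{hom}(S)$. For the degree-one part, use the splitting $A^2(S)=\QQ[\oo_S]\oplus A^2_{hom}(S)$ of subsection \ref{scan}. Since $f$ commutes with $G=\langle\sigma_0,\sigma_1,\sigma_2\rangle$, $f^\ast$ preserves each $G$-eigenspace of $A^2(S)$; in particular it preserves the line $A^2(S)^{+++}=\QQ[\oo_S]$ and therefore fixes $\oo_S$ (the unique generator of degree one, cf. definition \ref{can}). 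This gives $f^\ast=\ide$ on all of $A^2(S)$.

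The main obstacle is step two: one must appeal to the Bloch conjecture for symplectic finite-order automorphisms of $K3$ surfaces. This is by now available (Voisin for order $2$, Huybrechts more generally, built on finite-dimensionality arguments), but requires checking that $f_j$ fits into the hypotheses, i.e. that it is genuinely an automorphism (not merely a birational self-map) of the smooth projective $K3$ surface $X_j$, and that it is symplectic — both of which are ensured by the lifting construction and by the Hodge-theoretic decomposition coming from the burger structure.
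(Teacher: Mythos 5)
Your plan is correct and coincides with the paper's own proof: descend $f$ to symplectic finite-order automorphisms $f_j$ of the associated $K3$ surfaces, invoke Huybrechts' theorem to get $(f_j)^\ast=\ide$ on $A^2(X_j)$, transfer back to $A^2_{hom}(S)$ via theorem \ref{main} and the compatibility of $f^\ast$ with $(\Psi_j)^\ast$, and handle the remaining piece via the $f$-invariance of the line $A^2(S)^{+++}=\QQ[\oo_S]$. The extra care you take with the lift to the minimal resolution and the intertwining of correspondences only makes explicit what the paper leaves implicit.
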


   \begin{proof} Since $f$ commutes with the $\sigma_j$, $f$ induces finite--order automorphisms $f_j\in\aut(X_j), j=0,1,2$ that are symplectic. Huybrechts has proven \cite{Huy} that one has
    \[  (f_j)^\ast=\ide\colon\ \ \ A^2(X_j)\ \to\ A^2(X_j)\ \ \ (j=0,1,2)\ .\]
    Theorem \ref{main}, combined with the commutative diagram
     \[ \begin{array}[c]{ccc}
        A^2_{hom}(S) & \xrightarrow{ f^\ast} & A^2_{hom}(S) \\
        \uparrow{\scriptstyle (\Psi_j)^\ast}&&  \uparrow{\scriptstyle (\Psi_j)^\ast}\\
        A^2_{hom}(X_j) & \xrightarrow{ (f_j)^\ast} & A^2_{hom}(X_j) \\ 
        \end{array} \ \ \ \ \ \ (j=0,1,2) \]
        implies that
       \[ f^\ast=\ide\colon\ \ \ A^{2}_{hom}(S)\ \to\ A^{2}_{hom}(S)\ .\]   
       Since the $1$--dimensional subspace $A^2(S)^{+++}$ is fixed by $f$, this proves the corollary.
      \end{proof}

\subsection{Finite--dimensionality}

\begin{corollary}\label{main2} Let $S$ be a triple burger as in theorem \ref{main}, and let $X_j$ be the associated $K3$ surfaces. The morphism of Chow motives
   \[ (\Psi_0,\Psi_1,\Psi_2)\colon\ \ \ t(X_0)\oplus t(X_1)\oplus t(X_2)\ \xrightarrow{}\ t(S)\ \ \ \hbox{in}\ \MM_{\rm rat}\ \]
  is an isomorphism.  
   (Here, $t()$ denotes the transcendental part of the motive, as in theorem \ref{t2}.)
\end{corollary}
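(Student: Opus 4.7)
The natural candidate for an inverse is
\[ \Phi := \tfrac{1}{2}\bigl({}^t\Psi_0,\ {}^t\Psi_1,\ {}^t\Psi_2\bigr)\colon\ t(S)\ \to\ t(X_0)\oplus t(X_1)\oplus t(X_2)\ ,\]
and checking that $\Phi$ is a two-sided inverse in $\MM_{\rm rat}$ reduces to verifying, after composition with the relevant transcendental projectors, that $\tfrac{1}{2}\sum_j \Psi_j\circ {}^t\Psi_j$ acts as $\pi_2^{tr,S}$, that $\tfrac{1}{2}\,{}^t\Psi_j\circ \Psi_j$ acts as $\pi_2^{tr,X_j}$ for each $j$, and that $\tfrac{1}{2}\,{}^t\Psi_i\circ \Psi_j$ acts as zero for $i\neq j$. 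The middle family of identities is essentially formal: since $p_j\colon S\to\bar{X}_j$ has degree $2$ and $q_j\colon X_j\to\bar{X}_j$ is birational, ${}^t\Psi_j\circ \Psi_j$ equals $2\,\Delta_{X_j}$ modulo cycles supported on the exceptional locus of $q_j$, and such cycles are annihilated by $\pi_2^{tr,X_j}$.

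The remaining identities are established by re-running the spread argument from the proof of Theorem~\ref{main}. Working over the family $\Ss\to B$ of Notation~\ref{fam}, one considers the relative correspondence
\[ \Gamma\ :=\ \pi_2^{\Ss}\circ \Bigl(\Delta_\Ss \ -\ \tfrac{1}{2}\sum_{j=0}^2 \Psi_j^{\Ss}\circ {}^t\Psi_j^{\Ss}\Bigr)\circ \pi_2^{\Ss}\ \in\ A^2(\Ss\times_B \Ss)\ ,\]
where $\Psi_j^{\Ss}$ is the relative version of $\Psi_j$ (well-defined since condition~(iii) of Definition~\ref{def} ensures that the $\sigma_j$ extend fibrewise). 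Remark~\ref{Htr}, together with the cohomological identity $(\Psi_j)_\ast(\Psi_j)^\ast=2\cdot p_j^{+\pm\pm}$, implies that the transcendental component of $\Gamma\vert_b$ is cohomologically trivial for every $b\in B$. Applying the Leray-type argument of \cite[Lemmas 3.11, 3.12]{V0} produces $\delta\in A^2(\PP\times\PP)$ making $\Gamma+(\delta\times B)\vert_{\Ss\times_B\Ss}$ homologically trivial on the total space; Proposition~\ref{triv} then upgrades this to the equality $\Gamma = -(\delta\times B)\vert_{\Ss\times_B\Ss}$ in $A^2(\Ss\times_B\Ss)$. Restricting to a fibre and further composing with $\pi_2^{tr,S_b}$ annihilates both the divisor correction (divisors from $\PP\times\PP$ act trivially on the transcendental part of $S_b$) and the algebraic component of $\pi_2^{S_b}$, delivering the first family of identities in $A^2(S_b\times S_b)$. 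The off-diagonal identities are proven analogously, with the $\Psi_i\circ {}^t\Psi_j$-term replacing the diagonal one in the definition of $\Gamma$.

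The main obstacle is the relative set-up: constructing the correspondences $\Psi_j^{\Ss}$ coherently, which requires organizing the quotients $\Ss/\sigma_j$ together with a simultaneous family of $K3$-resolutions over $B$, and accommodating the fact that the transcendental projector $\pi_2^{tr}$ is not a priori available as a relative correspondence. Once these technical points are navigated, the machinery of Section~4 transfers verbatim from the proof of Theorem~\ref{main}. The cases~(ii) of Propositions~\ref{2} and~\ref{3}, in which $\langle\sigma_0,\sigma_1,\sigma_2\rangle\cong(\ZZ/2\ZZ)^2$, lie outside the family $\Ss\to B$; they are treated separately, exploiting the triviality $t(S/G)=0$ for the rational quotient surface, exactly as in the proof of Claim~\ref{claim}.
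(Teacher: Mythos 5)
Your strategy -- exhibiting the explicit inverse $\tfrac{1}{2}({}^t\Psi_0,{}^t\Psi_1,{}^t\Psi_2)$ and upgrading the needed identities to identities of correspondences modulo the transcendental projectors by re-running the spread argument -- is genuinely different from the paper's proof, and in outline it could be made to work, but as written it has concrete gaps. First, for $i\neq j$ the ``off-diagonal term'' $\Psi_i\circ{}^t\Psi_j$ does not exist: ${}^t\Psi_j\in A^2(S\times X_j)$ has target $X_j$ while $\Psi_i\in A^2(X_i\times S)$ has source $X_i$, so these correspondences are not composable. What you actually need to kill is ${}^t\Psi_i\circ\Psi_j\in A^2(X_j\times X_i)$; spreading that directly would require an analogue of Proposition \ref{triv} for $A^2_{hom}(\XX_j\times_B\XX_i)$, which the paper does not provide. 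A repair is to spread instead the composable endomorphism $(\Psi_i\circ{}^t\Psi_i)\circ(\Psi_j\circ{}^t\Psi_j)$ of $t(S)$ (it annihilates $H^2_{tr}(S_b)$ fibrewise, and lives on $\Ss\times_B\Ss$), and then recover ${}^t\Psi_i\circ\Psi_j=0$ by composing with $\tfrac{1}{2}{}^t\Psi_i$ and $\tfrac{1}{2}\Psi_j$ and using your diagonal identities; note the off-diagonal vanishing is genuinely needed, since $\alpha\beta=\ide$ and the diagonal identities only make $\tfrac{1}{2}\beta\alpha$ an idempotent with identity diagonal entries. Second, the spread argument yields the fibrewise cycle identity only after shrinking $B$ (the Leray step) and only for those $b$ whose fibre is not contained in the support divisor $\DD$, whereas the corollary concerns an arbitrary member of the family; the paper needs \cite[Lemma 3.1]{LFu} already for the weaker statement about actions on $A^2_{hom}(S_b)$, and you would need the corresponding extension-to-all-$b$ statement for correspondence identities modulo degenerate cycles, which you do not address. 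Third, the fact you use throughout -- that correspondences supported on (divisor)$\times$(divisor), or restricted from $\PP\times\PP$, become zero as cycles (not merely in their action on $A^2_{hom}$) after composition with $\pi_2^{tr}$ on both sides -- is exactly the computation of $\hom_{\MM_{\rm rat}}(t(S),t(S'))$ in \cite{KMP}; it carries the whole argument and must be invoked, not asserted.

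For comparison, the paper's proof is a short reduction that avoids all of this: by \cite[Lemma 1.1]{Huy2}, an isomorphism of transcendental motives of surfaces follows once the map induces isomorphisms on $A^2_{hom}$ after base change to every function field $K=k(Z)$ of a field of definition $k$; choosing an embedding $K\subset\C$ and using the injectivity $A^2(S_K)\hookrightarrow A^2(S_\C)$, this reduces to Claim \ref{claim} over $\C$, which was already established in the proof of Theorem \ref{main}. Your route, if completed with the repairs above, buys an explicit inverse correspondence and bypasses the function-field formalism, but it amounts to redoing (and strengthening) the spread argument rather than quoting Theorem \ref{main}, and the special-fibre and off-diagonal issues are precisely what the paper's function-field detour disposes of for free.
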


 \begin{proof} We may suppose $S$ and the $X_j$ are defined over some subfield $k\subset\C$ which is finitely generated over $\QQ$. To prove the isomorphism of motives, it suffices to prove there is an isomorphism
  \[ \bigl((\Psi_0)_\ast,(\Psi_1)_\ast,(\Psi_2)_\ast\bigr)\colon\ \ \  A^2_{hom}((X_0)_K)\oplus  A^2_{hom}((X_1)_K)  \oplus A^2_{hom}((X_2)_K)\ \xrightarrow{\cong}\ A^2_{hom}(S_K)\  \]
  for all function fields $K=k(Z)$ of varieties $Z$ defined over $k$ \cite[Lemma 1.1]{Huy2}. This is equivalent to proving claim \ref{claim} for the surface $S_K$. 
  Since $\C$ is a universal domain, one can choose an embedding $K\subset\C$. As is well--known (cf. \cite[Appendix to Lecture 1]{B}), this induces an injection
    \[  A^2(S_K)\ \hookrightarrow\ A^2(S_\C)\ ,\]
    and so claim \ref{claim} for $S_K$ follows from claim \ref{claim} for $S_\C$.
                   \end{proof}

 \begin{corollary}\label{cor1} Let $S$ be as in theorem \ref{main}, and assume
   \[ \dim H^2_{tr}(S)\le 7\ .\]
   Then $S$ has finite--dimensional motive (in the sense of Kimura \cite{Kim}). What's more, $S$ has motive of abelian type (in the sense of \cite{V3}).
   \end{corollary}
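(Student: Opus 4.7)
The plan is to reduce the question about $h(S)$ to the three associated $K3$ surfaces $X_j$ via Corollary \ref{main2}, and then to invoke a standard finite--dimensionality result for $K3$ surfaces of large Picard rank. By Corollary \ref{main2} there is an isomorphism of Chow motives
\[ t(S)\ \cong\ t(X_0) \oplus t(X_1) \oplus t(X_2) \quad \hbox{in}\ \MM_{\rm rat}\ . \]
Combined with the Chow--K\"unneth decomposition of Theorem \ref{t2}, the complement of $t(S)$ inside $h(S)$ is a direct sum of Tate/Lefschetz motives (from $\pi_0$, $\pi_4$, and from $\pi_2^{alg}$, which is a sum of copies of $\LLL$ since it acts as the identity on $N^1 H^2(S)$); in particular this complement is automatically finite--dimensional of abelian type. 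So the problem reduces to proving the same two properties for each transcendental motive $t(X_j)$.

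Next I would translate the hypothesis on $S$ into a Picard--rank bound for the $X_j$. By Remark \ref{Htr} one has $H^2_{tr}(S)\cong \bigoplus_j H^2_{tr}(X_j)$, so $\sum_j \dim H^2_{tr}(X_j) \le 7$. Since each $X_j$ is a $K3$ surface, $H^{2,0}(X_j)$ is one--dimensional and orthogonal to $NS(X_j)_\QQ$ under the Hodge--Riemann form; hence $\dim H^2_{tr}(X_j) \ge 2$ for every $j$. Three summands each of dimension $\ge 2$ summing to at most $7$ then force $\dim H^2_{tr}(X_j) \le 3$, i.e.\ $\rho(X_j) \ge 19$ for every $j$.

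The final step is to cite the known fact that a $K3$ surface $X$ with $\rho(X)\ge 19$ has finite--dimensional Chow motive of abelian type: for $\rho(X)=20$ this follows from Shioda--Inose theory, which exhibits $X$ as rationally dominated by a Kummer surface $\hbox{Km}(E_1\times E_2)$; for $\rho(X)=19$ the corresponding result is due to Pedrini. Since finite--dimensionality and the abelian--type property are both preserved under direct sums and direct summands, each $t(X_j)$ inherits them, and therefore so do $t(S)$ and $h(S)$. The essential input --- and hence the main ``obstacle'' --- is this external theorem about $K3$ surfaces of large Picard rank; once it is granted, the remainder of the argument is purely formal bookkeeping of motivic summands.
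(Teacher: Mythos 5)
Your argument is correct and follows essentially the same route as the paper: use the splitting $H^2_{tr}(S)\cong\bigoplus_j H^2_{tr}(X_j)$ to force $\dim H^2_{tr}(X_j)\le 3$, invoke Pedrini's finite--dimensionality result for such $K3$ surfaces together with Shioda--Inose/Kummer domination for the abelian--type statement, and transfer everything to $S$ via the isomorphism $t(S)\cong t(X_0)\oplus t(X_1)\oplus t(X_2)$ of corollary \ref{main2}. The only addition is your explicit remark that the complement of $t(S)$ in $h(S)$ is of Lefschetz type, which the paper leaves implicit.
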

   
  \begin{proof} Let $X_0, X_1, X_2$ be the associated $K3$ surfaces. Recall (proposition \ref{Htr}) that there is an isomorphism
     \[ H^2_{tr}(S)\cong H^2_{tr}(X_0)\oplus H^2_{tr}(X_1)\oplus H^2_{tr}(X_2)\ .\]  
   The $X_j$ being $K3$ surfaces, the dimension of $H^2_{tr}(X_j)$ is at least $2$, and so the assumption on $ H^2_{tr}(S)$ implies that
   \[ \dim  H^2_{tr}(X_j)\le 3\ \ \ (j=0,1,2)\ .\]
   It follows from \cite{Ped} that the $X_j$ have finite-dimensional motive. In view of corollary \ref{main2}, 
   the motive $t(S)$ is isomorphic to $t(X_0)\oplus t(X_1)\oplus t(X_2)$, and so
   this implies the corollary.
   
   To see that $S$ has motive of abelian type, one remarks that the $K3$ surfaces $X_j$ either have a Shioda--Inose structure, or are rationally dominated by a Kummer surface \cite{ShI}, \cite{Mor0}. This implies that their motive is actually a submotive of the motive of an abelian surface.
   \end{proof} 
      
\begin{remark}\label{NB} In fairness, I hasten to add that I am not sure whether surfaces $S$ as in corollary \ref{cor1} exist. Indeed, one might naively expect that inside the families
  \[ \XX_j\ \to\ B\ \ \ (j=0,1,2) \]
  of $K3$ surfaces associated to the family $\Ss\to B$ (cf. notation \ref{fam}), $\rho$--maximal surfaces lie analytically dense (and so $\rho$--maximal triple $K3$ burgers would also be analytically dense). But to prove this, one would need to know a Torelli result for this type of $K3$ surfaces.
  
  For this reason, corollary \ref{cor1} is only a conditional result.
  \end{remark}

  \section{Open questions}
  \label{sopen}
  
  \begin{question} Can one prove Torelli type theorems for families of triple $K3$ burgers as in theorem \ref{main} ?
  As noted in remark \ref{NB}, this would have interesting consequences for the distribution of Picard numbers, and for the existence of certain finite--dimensional motives.
  \end{question}

  \begin{question}\label{all} Let $S$ be a triple $K3$ burger as in theorem \ref{main}. I wonder whether a stronger version of proposition \ref{int} might be true: is it the case that (as for $K3$ surfaces)
    \[ A^1(S)_\ZZ\cdot A^1(S)_\ZZ=\ZZ[\oo_S]\ \ \ \subset\ A^2(S)_\ZZ\ \ ?? \]
    On a related note, does $S$ have a multiplicative Chow--K\"unneth decomposition, in the sense of \cite{SV} ?
  \end{question}
   
  \begin{question} Let $S$ be a triple $K3$ burger as in theorem \ref{main}. Is it the case that (as for $K3$ surfaces) the second Chern class
    $ c_2(T_S) \in A^2(S)$ lies in the subgroup $\QQ[\oo_S]$ ?
  \end{question} 
  
  \begin{question} Let $X$ be a $K3$ surface, and let $F$ be a simple rigid vector bundle on $X$. Voisin has proven \cite[Theorem 1.9]{V21} that $c_2(F)\in A^2(X)$ lies in the subgroup $\QQ[\oo_X]$.
  Can one prove a similar statement for triple $K3$ burgers ?
  
(Presumably, Voisin's argument for $K3$ surfaces can be adapted to triple $K3$ burgers ? At least the ``dimension of orbit'' part goes through unchanged (proposition \ref{orbit}). However, Voisin's argument also involves Riemann--Roch calculations, which rely on having trivial canonical bundle. I have not pursued this.)  
    \end{question}

  \begin{question} Let $\pi\colon\Ss\to B$ be a family of surfaces (i.e., a smooth projective morphism with $2$--dimensional fibres). According to Deligne \cite{De}, there is a decomposition isomorphism
    \[   R \pi_\ast \QQ \cong \bigoplus_i R^i \pi_\ast\QQ[-i]\  \]
    in the derived category of sheaves of $\QQ$--vector spaces on $B$. 
    If the fibres of $\pi$ are $K3$ surfaces, then according to Voisin \cite{V13}, one can choose an isomorphism that becomes {\em multiplicative\/} after shrinking the base $B$. Can one do the same for a family of triple $K3$ burgers ?
    
   (This is closely related to the existence of a multiplicative Chow--K\"unneth decomposition, cf. \cite[Section 4]{V6}.) 
     \end{question} 
     
  \begin{question} What are the generic and maximal Picard numbers for the families of triple $K3$ burgers of theorem \ref{main} ?
  \end{question}

  \begin{question} Constructing quadruple $K3$ burgers (i.e., surfaces satisfying the $m=4$ analogon of definition \ref{def}) seems a daunting task. 
  
  (For example: if we suppose $S$ is a canonical surface of general type with $p_g=4$ and $K^2=5$, then we know \cite{Hor1} that $S$ is isomorphic to a quintic in $\PP^3$ with rational double points. Consider the involutions 
   \[ \begin{split} \sigma_0  [x_0:x_1:x_2:x_3]\ &=  [-x_0:x_1:x_2:x_3]  \ ,\\
                          \sigma_1  [x_0:x_1:x_2:x_3]\ &=  [x_0:-x_1:x_2:x_3]  \ ,\\
                         \sigma_2    [x_0:x_1:x_2:x_3]\ &=  [x_0:x_1:-x_2:x_3]  \ ,\\
                          \sigma_3    [x_0:x_1:x_2:x_3]\ &=  [x_0:x_1:x_2:-x_3]  \ ,\\
                          \end{split} \]
            If $S$ is a hypersurface invariant under $\sigma_j$ (i.e., the defining equation of $S$ contains only even powers of $x_j$), the quotient $S/<\sigma_j>$ is a $K3$ surface with double points. However, clearly there is no quintic hypersurface invariant under all $4$ involutions $\sigma_j$ !)
            
  The following is a weaker question: can one at least find general type surfaces $S$ with $p_g(S)=4$ such that the transcendental cohomology of $S$ splits in $4$ pieces of $K3$ type ? 
  And what about $p_g>4$ ?        
    \end{question}

\vskip1cm
\begin{nonumberingt} Thanks to the editor for his patience. Thanks to the referee for constructive remarks and for spotting an important oversight in a prior version.
Thanks to Kai and Len, for enjoying Sesamstraat as much as I do.
\end{nonumberingt}

\vskip1cm

\end{document}